\documentclass[11pt]{amsart}
\usepackage{amsmath,amsfonts,amssymb,amsthm,mathtools,mathrsfs,bm}
\usepackage{booktabs,enumitem}
\usepackage[T1]{fontenc}
\usepackage[colorlinks=true,citecolor=blue,linkcolor=blue,urlcolor=blue]{hyperref}
\allowdisplaybreaks
\newcommand{\amal}[5]{
  {#1}^{\,#2}_{\vphantom{#3}}\!\times_{#3}^{#4}\!{#5}
}

\newcommand{\R}{\mathbb R}

\newcommand{\Z}{\mathbb Z}
\newcommand{\HH}{\mathscr H}
\newcommand{\DD}{\mathscr D}
\newcommand{\GG}{\mathfrak G}
\newcommand{\Id}{\operatorname{Id}}
\newcommand{\spc}{\operatorname{spec}}
\newcommand{\Ker}{\operatorname{Ker}}
\newcommand{\Iso}{\operatorname{Iso}}

\newcommand{\graddeg}{\operatorname{deg}^{\nabla}_{\GG}}
\newcommand{\rel}{\operatorname{deg}^{\nabla}_{\GG,\mathrm{rel}}}
\newcommand{\Dg}{\mathfrak d_{\GG}}
\newcommand{\iso}{\simeq_{\GG}}
\newcommand{\du}{\mathrm d}
\newcommand{\pair}[2]{\left\langle #1,#2\right\rangle}

\newcommand{\starprod}{\mathbin{\star}}

\newcommand{\calC}{\mathcal C}

\newcommand{\Eminus}[1]{E^-(#1)}

\newtheorem{theorem}{Theorem}[section]
\newtheorem{proposition}[theorem]{Proposition}
\newtheorem{lemma}[theorem]{Lemma}
\newtheorem{corollary}[theorem]{Corollary}
\newtheorem{definition}[theorem]{Definition}
\newtheorem{assumption}[theorem]{Assumption}
\newtheorem{remark}[theorem]{Remark}
\newtheorem{example}[theorem]{Example}

\numberwithin{equation}{section}

\begin{document}

\title{Equivariant Gradient Degree Comparison via Clarke Duality for Strongly Indefinite Hamiltonian Systems}

\author{Shi Yu}
\address{Department of Mathematics, Odessa College,
201 W University Blvd, Odessa, TX 79764, USA}
\email{syu@odessa.edu}

\begin{abstract}
We study equivariant gradient degree comparisons for
strongly indefinite Hamiltonian systems through Clarke
duality. Under suitable convexity and compactness
assumptions, we establish a correspondence between the
primal and dual equations that preserves the symmetries
of their solutions. We derive an identity relating their
negative spectral representations and show that the
Galerkin-normalized local index of the primal linearization
coincides with the equivariant gradient degree determined
by the dual Hessian. This comparison extends to
infinite-dimensional settings through stabilized Galerkin
approximations. We also establish relations between
primal and dual crossing forms, equivariant spectral
flow, and local equivariant gradient degrees at
nondegenerate fixed solutions. The results provide a
framework for studying equivariant bifurcation in
strongly indefinite systems. As an application, we
consider periodic Hamiltonian ring equations with
dihedral symmetry and illustrate the computation of
equivariant gradient degrees and bifurcation invariants.
\end{abstract}

\subjclass[2020]{Primary 47H11, 58E09, 47J15, 37J46}

\keywords{Clarke duality, equivariant gradient degree,
strongly indefinite Hamiltonian systems,
Galerkin approximation, negative spectral representations,
equivariant spectral flow, bifurcation, dihedral symmetry}

\maketitle

\section{Introduction}

Variational methods for Hamiltonian systems often lead
to strongly indefinite functionals whose associated
linear operators possess infinitely many positive and
negative spectral directions. This spectral structure
presents difficulties for the direct application of
classical Morse theory and topological degree methods.
An important approach to such problems is provided by
Clarke duality, which transforms a Hamiltonian variational
problem into a dual formulation through convex conjugation.
The dual variational method was developed in the study
of periodic Hamiltonian trajectories and has become a
useful tool for analyzing convex Hamiltonian systems;
see \cite{Clarke1979,ClarkeEkeland1980,Clarke1985}.
Related variational frameworks for Hamiltonian elliptic
systems are discussed in \cite{Bonheure2014}.

When a Hamiltonian system admits a group of symmetries,
the corresponding variational problem can be studied
using equivariant topological methods. Equivariant
gradient degree extends classical topological degree
by incorporating information about the symmetries of
solutions. Its construction, computational properties,
and applications to symmetric differential equations
have been developed in
\cite{Geba1997,RybickiGrad,AED,SURVEY,RuanRybicki2008}.
For variational problems with $SO(2)$ symmetry,
related bifurcation methods are developed in \cite{Rybi1}.

The application of equivariant gradient degree to
strongly indefinite functionals requires additional
considerations because the negative spectral subspaces
may be infinite-dimensional. Go\l\k{e}biewska and
Rybicki \cite{GolebiewskaRybicki2011} developed an
equivariant degree theory for strongly indefinite
invariant functionals and established global bifurcation
results for critical orbits. Related approaches to
bifurcation through spectral flow were developed by
Fitzpatrick, Pejsachowicz, and Recht
\cite{FitzpatrickPejsachowiczRecht1999}, while the
spectral-flow framework and its relation to crossing
forms were studied by Robbin and Salamon
\cite{RobbinSalamon1995}.

Although Clarke duality establishes a correspondence
between solutions of the primal and dual equations,
it does not immediately imply an equality of their
equivariant gradient degrees. The linearizations of
the two equations may have different negative spectral
representations, and the ordinary finite-Morse-index
equivariant gradient degree is not directly applicable
to a strongly indefinite primal operator. This raises
the question of how the equivariant gradient degrees
of the primal and dual linearizations are related
and whether a suitable normalization of finite-dimensional
approximations produces a stable invariant.

The purpose of this paper is to establish such a
comparison within an abstract equivariant Clarke
duality framework. We first prove an identity relating
the negative spectral representations of the primal
and dual Hessians. This identity yields a finite-dimensional
equivariant gradient degree comparison and identifies
the contribution of the reference operator. We then
construct a Galerkin-normalized local index for strongly
indefinite primal linearizations and prove that,
under suitable compactness and positivity assumptions,
this index stabilizes and agrees with the equivariant
gradient degree determined by the negative spectral
representation of the dual Hessian. We also establish
a correspondence between primal and dual crossing
forms and obtain the corresponding relation between
their equivariant spectral flows.

For nonlinear problems, we derive a local additive
comparison formula for equivariant gradient degrees
at isolated nondegenerate fixed solutions. We distinguish
this result from the ordinary equivariant compact-map
degree comparison for finite symmetry groups, which
follows from the classical commutativity property
of the fixed-point index; see
\cite{Krasnoselskii,GranasDugundji2003}.

We further consider bifurcation from the trivial
branch of equivariant Hamiltonian problems. Changes
in the equivariant gradient degree of the dual
linearization provide a means of detecting bifurcation.
The primal--dual correspondence transfers
the resulting solutions and preserves their isotropy
groups. The bifurcation analysis is related to the
classical global bifurcation theory of Rabinowitz
\cite{Rabinowitz} and to equivariant methods for
symmetric Hamiltonian and periodic systems
\cite{Dancer,SY1}. Applications of related symmetry-based
bifurcation methods include periodic solutions in the
restricted $n$-body problem \cite{Az1}, polygonal relative
equilibria \cite{Az2}, spiral waves in the complex
Ginzburg--Landau equation \cite{GA_Ghanem_Krawcewicz_CGL},
and periodic Weyl and Riesz fractional equations
\cite{YuFractional}.

As an application, we study periodic solutions of
coupled Hamiltonian equations arranged in a ring
with dihedral symmetry. Equivariant degree methods
have also been applied to periodic solutions in delayed
rings of identical cells \cite{YuRingDelay} and to
inventory oscillations in ring supply-chain networks
\cite{YuSupplyChain}. Here we focus instead on the
primal--dual gradient-degree comparison for a
Hamiltonian ring. Fourier decomposition reduces the
linear comparison to finite-dimensional equivariant
representations. A five-site example illustrates the
computation of the dual negative spectral representations,
the corresponding equivariant gradient degree, and
a local bifurcation invariant.

The paper is organized as follows. Section 2 establishes
the abstract equivariant Clarke duality correspondence.
Section 3 examines the Hessians, negative spectral
representations, and crossing forms. Section 4 develops
the finite-dimensional equivariant gradient degree
comparison. Section 5 establishes the infinite-dimensional
normalized linear index. Section 6 discusses equivariant
bifurcation, and Section 7 applies the results to
periodic Hamiltonian ring systems. The appendices
provide the basic equivariant gradient degree calculations
and subgroup identifications used in the example.

\section{Abstract equivariant Clarke duality}\label{sec:framework}

\begin{assumption}
\label{ass:convex}
Let $\GG$ be a compact Lie group and $\HH$ a real
separable Hilbert space carrying a strongly continuous
orthogonal representation of $\GG$.

Assume that
\[
A:\DD(A)\subset\HH\longrightarrow\HH
\]
is a densely defined, self-adjoint, invertible,
$\GG$-equivariant operator with compact inverse
\[
A^{-1}:\HH\longrightarrow\HH.
\]

Let $\mathcal H:\HH\to\R$ be a convex,
continuously Fr\'echet differentiable,
$\GG$-invariant functional satisfying
\[
\mathcal H(0)=0,\qquad
\nabla\mathcal H(0)=0,
\]
and assume that
\[
\nabla\mathcal H:\HH\longrightarrow\HH
\]
is a $\GG$-equivariant homeomorphism.

When Hessian formulas at zero are required,
assume additionally that $\nabla\mathcal H$
is Fr\'echet differentiable at zero and
\begin{equation}
\label{eq:B}
B:=D(\nabla\mathcal H)(0),\qquad
b_0\Id\le B\le b_1\Id,
\qquad b_0,b_1>0.
\end{equation}
\end{assumption}

Since $A$ is invertible with compact inverse,
it has compact resolvent, and its spectrum
may be unbounded in both directions.

Define the convex conjugate of $\mathcal H$ by
\[
\mathcal H^*(u):=
\sup_{z\in\HH}\left\{
\langle u,z\rangle-\mathcal H(z)\right\}.
\]

\begin{lemma}
\label{lem:dual-gradient}
Under Assumption~\ref{ass:convex},
the convex conjugate $\mathcal H^*$
belongs to $C^1(\HH,\R)$ and satisfies
\begin{equation}
\label{eq:dual-gradient}
\nabla\mathcal H^*(u)=(\nabla\mathcal H)^{-1}(u), \qquad u\in\HH.
\end{equation}
Moreover, $\mathcal H^*$ is $\GG$-invariant, and $\nabla\mathcal H^*$ is a $\GG$-equivariant homeomorphism.
\end{lemma}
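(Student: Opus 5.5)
Set $\Phi:=\nabla\mathcal H$ and $\Psi:=\Phi^{-1}$. By Assumption~\ref{ass:convex}, $\Psi:\HH\to\HH$ is a $\GG$-equivariant homeomorphism. The plan is to show that for each $u$ the supremum defining $\mathcal H^*(u)$ is attained exactly at $z=\Psi(u)$. From this we get the explicit formula
\[
\mathcal H^*(u)=\langle u,\Psi(u)\rangle-\mathcal H(\Psi(u)).
\]
Differentiability then follows by a two-sided estimate of the difference quotient that uses only convexity and continuity of $\Psi$.

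\emph{Step 1 (attainment and finiteness).} Fix $u$ and put $z_u:=\Psi(u)$, so that $\nabla\mathcal H(z_u)=u$. Since $\mathcal H$ is convex and $C^1$, the gradient inequality gives, for every $z$,
\[
\mathcal H(z)\ge \mathcal H(z_u)+\langle u,z-z_u\rangle .
\]
Rearranging, $\langle u,z\rangle-\mathcal H(z)\le \langle u,z_u\rangle-\mathcal H(z_u)$. Hence $\mathcal H^*(u)$ is finite, the supremum is attained at $z_u$, and the formula above holds.

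\emph{Step 2 (gradient).} For $u,v\in\HH$ we bound $\mathcal H^*(v)-\mathcal H^*(u)$ from both sides.
\begin{itemize}
\item Lower bound: testing the supremum for $\mathcal H^*(v)$ at $z_u$ gives $\mathcal H^*(v)\ge\langle v,z_u\rangle-\mathcal H(z_u)$, so $\mathcal H^*(v)-\mathcal H^*(u)\ge\langle v-u,z_u\rangle$.
\item Upper bound: exchanging the roles of $u$ and $v$ gives $\mathcal H^*(v)-\mathcal H^*(u)\le\langle v-u,z_v\rangle$.
\end{itemize}
Combining the two,
\[
0\le \mathcal H^*(v)-\mathcal H^*(u)-\langle v-u,\Psi(u)\rangle\le \langle v-u,\Psi(v)-\Psi(u)\rangle\le \|v-u\|\,\|\Psi(v)-\Psi(u)\|.
\]
Continuity of $\Psi$ at $u$ makes the right-hand side $o(\|v-u\|)$. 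Therefore $\mathcal H^*$ is Fr\'echet differentiable with $\nabla\mathcal H^*(u)=\Psi(u)$. This gradient is continuous because $\Psi$ is a homeomorphism, so $\mathcal H^*\in C^1$ and \eqref{eq:dual-gradient} holds.

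\emph{Step 3 (symmetry).} For $g\in\GG$, the representation is orthogonal, so the substitution $z=gw$ gives
\[
\mathcal H^*(gu)=\sup_w\{\langle gu,gw\rangle-\mathcal H(gw)\}=\sup_w\{\langle u,w\rangle-\mathcal H(w)\}=\mathcal H^*(u),
\]
using the $\GG$-invariance of $\mathcal H$. Moreover, the inverse of an equivariant bijection is equivariant: $\Phi(g\Psi(u))=g\Phi(\Psi(u))=gu$, so $\Psi(gu)=g\Psi(u)$. Together with Step 2, this shows $\nabla\mathcal H^*$ is a $\GG$-equivariant homeomorphism.

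The main point needing care is Step 2. It is not enough to observe that the supremum is attained; one must produce the explicit $o(\|v-u\|)$ remainder. The sandwich estimate does this, and it needs only continuity of $\Psi$, not any Lipschitz bound.
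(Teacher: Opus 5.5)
Your proof is correct and follows essentially the same route as the paper. The shared steps are: attainment of the supremum at $z(u)=(\nabla\mathcal H)^{-1}(u)$ via the convexity gradient inequality; the two-sided sandwich $0\le \mathcal H^*(u+h)-\mathcal H^*(u)-\langle h,z(u)\rangle\le\|h\|\,\|z(u+h)-z(u)\|$ together with continuity of the inverse; and the substitution $z=gw$ plus equivariance of the inverse for the symmetry claims.
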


\begin{proof}
Set $N=\nabla\mathcal H$. Since $N$ is a homeomorphism, for every $u\in\HH$ there
exists a unique
\[
z(u)=N^{-1}(u)
\]
such that
\[
\nabla\mathcal H(z(u))=u.
\]

By convexity and differentiability of
$\mathcal H$, for every $z\in\HH$,
\[
\begin{aligned}
\mathcal H(z)
&\ge \mathcal H(z(u))
+\langle\nabla\mathcal H(z(u)), z-z(u)\rangle\\
&=
\mathcal H(z(u)) +\langle u,z-z(u)\rangle.
\end{aligned}
\]
Rearranging gives
\[
\langle u,z\rangle-\mathcal H(z)
\le \langle u,z(u)\rangle-\mathcal H(z(u)).
\]
Since this inequality holds for every
$z\in\HH$, and equality holds at
$z=z(u)$, the supremum defining
$\mathcal H^*(u)$ is attained at $z(u)$.
Consequently,
\[
\mathcal H^*(u)=\langle u,z(u)\rangle-\mathcal H(z(u)).
\]

We next prove that $\mathcal H^*$ is
continuously Fr\'echet differentiable.
For any $h\in\HH$, the definition of
the convex conjugate gives
\[
\begin{aligned}
\mathcal H^*(u+h)&=
\sup_{z\in\HH}\{\langle u+h,z\rangle-\mathcal H(z)\}\\
&\ge\langle u+h,z(u)\rangle-\mathcal H(z(u))\\
&=
\langle u,z(u)\rangle-\mathcal H(z(u))
+\langle h,z(u)\rangle\\
&=\mathcal H^*(u)+\langle h,z(u)\rangle.
\end{aligned}
\]

Similarly, choosing $z=z(u+h)$ in
the definition of $\mathcal H^*(u)$,
we obtain
\[
\begin{aligned}
\mathcal H^*(u)&=
\sup_{z\in\HH}\{\langle u,z\rangle-\mathcal H(z)\}\\
&\ge
\langle u,z(u+h)\rangle
-\mathcal H(z(u+h))\\&=
\langle u+h,z(u+h)\rangle
-\mathcal H(z(u+h))\\
&\qquad-\langle h,z(u+h)\rangle\\
&=\mathcal H^*(u+h)
-\langle h,z(u+h)\rangle.
\end{aligned}
\]

Combining these inequalities yields
\[
\begin{aligned}
0&\le
\mathcal H^*(u+h)-\mathcal H^*(u)-\langle h,z(u)\rangle\\
&\le\langle h,z(u+h)-z(u)\rangle\\
&\le\|h\|\,\|z(u+h)-z(u)\|.
\end{aligned}
\]

For $h\ne0$, it follows that
\[
\begin{aligned}
0&\le
\frac{\mathcal H^*(u+h)-\mathcal H^*(u)
-\langle h,z(u)\rangle}{\|h\|}\\
&\le\|z(u+h)-z(u)\|.
\end{aligned}
\]
Since $z=N^{-1}$ is continuous,
\[
\lim_{h\to0}\|z(u+h)-z(u)\|=0.
\]
Therefore,
\[
\lim_{h\to0}\frac{\mathcal H^*(u+h)-\mathcal H^*(u)
-\langle h,z(u)\rangle}{\|h\|}=0.
\]
Thus $\mathcal H^*$ is Fr\'echet
differentiable at $u$, with
\[
D\mathcal H^*(u)[h]=\langle z(u),h\rangle.
\]
By the Riesz representation theorem,
\[
\nabla\mathcal H^*(u)=z(u)
=(\nabla\mathcal H)^{-1}(u).
\]
Since $(\nabla\mathcal H)^{-1}$ is continuous, we conclude that
\[
\mathcal H^*\in C^1(\HH,\R).
\]

Finally, for every $g\in\GG$, the orthogonality of the group action
and the $\GG$-invariance of $\mathcal H$ imply
\[
\begin{aligned}
\mathcal H^*(gu)&=
\sup_{z\in\HH}\{\langle gu,z\rangle-\mathcal H(z)\}\\
&=\sup_{z\in\HH}
\{\langle u,g^{-1}z\rangle-\mathcal H(g^{-1}z)\}\\
&=\mathcal H^*(u).
\end{aligned}
\]
Hence $\mathcal H^*$ is $\GG$-invariant.

Moreover, the equivariance of $N$ implies that of $N^{-1}$, so
\[
\begin{aligned}
\nabla\mathcal H^*(gu)
&=N^{-1}(gu)\\
&=gN^{-1}(u)\\
&=g\nabla\mathcal H^*(u).
\end{aligned}
\]
Thus $\nabla\mathcal H^*$ is a
$\GG$-equivariant homeomorphism.
\end{proof}

The original equation is
\begin{equation}
\label{eq:primal}
F(z):=Az+\nabla\mathcal H(z)=0,
\qquad z\in\DD(A),
\end{equation}
with the associated primal action
\[
\mathcal A(z)=\frac12\langle Az,z\rangle+\mathcal H(z).
\]

\begin{theorem}[Equivariant primal--dual correspondence]\label{thm:correspondence}
Under Assumption~\ref{ass:convex}, the equation \eqref{eq:primal} is equivalent to
\[
 F^*(u):=A^{-1}u+\nabla\mathcal H^*(u)=0.
\]
Corresponding solutions are related by
\begin{equation}
\label{eq:correspondence}
 u=\nabla\mathcal H(z),\qquad z=\nabla\mathcal H^*(u)=-A^{-1}u.
\end{equation}
The correspondence is $\GG$-equivariant and
\[
\Iso_{\GG}(z)=\Iso_{\GG}(u).
\]
The dual equation is the Euler equation of
\[
 \Psi(u)=\frac12\pair{A^{-1}u}{u}+\mathcal H^*(u).
\]
Moreover, $\Psi\in C^1(\HH,\R)$ and
\[
\nabla\Psi(u)=F^*(u).
\]
\end{theorem}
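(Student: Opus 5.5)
The plan is to verify the equivalence directly from Lemma~\ref{lem:dual-gradient}, using that $\nabla\mathcal H$ is a bijection with inverse $\nabla\mathcal H^*$.

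\emph{From primal to dual.} Let $z\in\DD(A)$ solve \eqref{eq:primal} and set $u:=\nabla\mathcal H(z)\in\HH$. Then $Az=-u$. Since $A$ is invertible, this gives $z=-A^{-1}u$. By \eqref{eq:dual-gradient}, $\nabla\mathcal H^*(u)=z$. Hence
\[
A^{-1}u+\nabla\mathcal H^*(u)=-z+z=0.
\]

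\emph{From dual to primal.} Let $u\in\HH$ satisfy $F^*(u)=0$ and set $z:=\nabla\mathcal H^*(u)$. The equation gives $z=-A^{-1}u$, and since $A^{-1}$ maps $\HH$ onto $\DD(A)$, we have $z\in\DD(A)$ and $Az=-u$. By Lemma~\ref{lem:dual-gradient}, $\nabla\mathcal H(z)=u$, so $Az+\nabla\mathcal H(z)=0$. The two assignments are mutually inverse because $\nabla\mathcal H$ and $\nabla\mathcal H^*$ are inverse bijections. This yields the bijection between solution sets and the relations \eqref{eq:correspondence}.

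\emph{Equivariance and isotropy.} The map $z\mapsto u=\nabla\mathcal H(z)$ is $\GG$-equivariant by Assumption~\ref{ass:convex}, and its inverse $\nabla\mathcal H^*$ is equivariant by Lemma~\ref{lem:dual-gradient}. If $gz=z$, then $gu=g\nabla\mathcal H(z)=\nabla\mathcal H(gz)=u$. Conversely, if $gu=u$, then $gz=g\nabla\mathcal H^*(u)=\nabla\mathcal H^*(gu)=z$. Hence $\Iso_\GG(z)=\Iso_\GG(u)$. We also check that $F^*$ is equivariant: $A$ commutes with $g$, so $A^{-1}$ does as well, since $gA^{-1}u=A^{-1}AgA^{-1}u=A^{-1}gu$. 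Here we use that $g$ preserves $\DD(A)$, which is implicit in the equivariance of $A$.

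\emph{Variational structure.} The one point needing care is differentiability of the quadratic part. $A^{-1}$ is bounded and self-adjoint, because the inverse of a self-adjoint invertible operator is self-adjoint. Therefore $Q(u)=\tfrac12\pair{A^{-1}u}{u}$ is a bounded quadratic form, and
\[
Q(u+h)-Q(u)=\tfrac12\bigl(\pair{A^{-1}u}{h}+\pair{A^{-1}h}{u}\bigr)+Q(h)=\pair{A^{-1}u}{h}+O(\|h\|^2).
\]
So $Q\in C^1$ with $\nabla Q=A^{-1}$. Combined with $\mathcal H^*\in C^1(\HH,\R)$ from Lemma~\ref{lem:dual-gradient}, this gives $\Psi\in C^1$ and $\nabla\Psi=A^{-1}+\nabla\mathcal H^*=F^*$. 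The dual equation is therefore the Euler equation of $\Psi$. Finally, $\Psi$ is $\GG$-invariant, since $\pair{A^{-1}gu}{gu}=\pair{gA^{-1}u}{gu}=\pair{A^{-1}u}{u}$ by orthogonality, and $\mathcal H^*$ is invariant by Lemma~\ref{lem:dual-gradient}.

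No step is a serious obstacle. The only subtleties are checking that the domain condition $z\in\DD(A)$ is recovered in the dual-to-primal direction, and that $A^{-1}$ is self-adjoint so that $\nabla Q$ is exactly $A^{-1}$ rather than its symmetrization.
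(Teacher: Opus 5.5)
Your proposal is correct and follows essentially the same route as the paper. Both arguments give a direct two-way verification using $\nabla\mathcal H^*=(\nabla\mathcal H)^{-1}$ and $A^{-1}\HH=\DD(A)$, obtain the isotropy equality from equivariance of $\nabla\mathcal H$ and its inverse, and get $\nabla\Psi=F^*$ from self-adjointness of $A^{-1}$. The small differences do not matter: the paper also records the intertwining identity $F^*(\nabla\mathcal H(z))=A^{-1}F(z)$ and re-derives equivariance of $\nabla\mathcal H$ from invariance of $\mathcal H$, while you add the invariance of $\Psi$.
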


\begin{proof}
Set $N=\nabla\mathcal H$ and $N^{-1}=\nabla\mathcal H^*$. For $u=N(z)$,
\[
\begin{aligned}
 F(z)=0&\iff Az+u=0
 \iff z=-A^{-1}u\\&\iff N^{-1}(u)+A^{-1}u=0
 \iff F^*(u)=0.
\end{aligned}
\]
Conversely, $F^*(u)=0$ implies
\[
 z=N^{-1}(u)=-A^{-1}u\in\DD(A),\qquad
 F(z)=-u+N(N^{-1}(u))=0.
\]
Moreover, for $z\in\DD(A)$,
\[
\begin{aligned}
F^*(N(z))
 &=A^{-1}N(z)+N^{-1}(N(z))\\
 &=A^{-1}(N(z)+Az)=A^{-1}F(z).
\end{aligned}
\]
For $g\in\GG$ and $v\in\HH$,
\[
\begin{aligned}
\langle N(gz),v\rangle
 &=D\mathcal H(gz)[v]
 =D\mathcal H(z)[g^{-1}v]\\
 &=\langle N(z),g^{-1}v\rangle
 =\langle gN(z),v\rangle.
\end{aligned}
\]
Thus $N(gz)=gN(z)$ and $N^{-1}(gu)=gN^{-1}(u)$, whence
\[
 gz=z\iff N(gz)=N(z)\iff gu=u,
 \qquad \Iso_{\GG}(z)=\Iso_{\GG}(u).
\]
Finally, self-adjointness of $A^{-1}$ gives
\[
\begin{aligned}
D\Psi(u)[v]
 &=\tfrac12\langle A^{-1}v,u\rangle
  +\tfrac12\langle A^{-1}u,v\rangle
  +D\mathcal H^*(u)[v]\\
 &=\langle A^{-1}u+\nabla\mathcal H^*(u),v\rangle.
\end{aligned}
\]
Consequently,
\[
 \nabla\Psi(u)=F^*(u),\qquad
 D\Psi(u)=0\iff F^*(u)=0.
\]
\end{proof}

\begin{remark}\label{rem:intertwine}
Wherever the two sides are defined,
\begin{equation}
\label{eq:intertwine}
 F^*(\nabla\mathcal H(z))=A^{-1}F(z).
\end{equation}
The identity identifies the zero sets. It does not imply equality of gradient degrees, since $A^{-1}$ may reverse orientation on negative subrepresentations.
\end{remark}

For every closed subgroup $K\le\GG$, define
the $K$-fixed-point subspace by
\[
\HH^K
:=
\{z\in\HH:gz=z\text{ for all }g\in K\}.
\]
Since the action of $\GG$ on $\HH$ is orthogonal, $\HH^K$ is a closed linear subspace of $\HH$.

\begin{theorem}[Restriction to fixed-point equations]
\label{thm:fixed-dual}
Under Assumption~\ref{ass:convex}, for every closed subgroup $K\le\GG$, the Legendre transformation restricts to a homeomorphism
\[
\nabla\mathcal H:\HH^K\longrightarrow\HH^K
\]
and establishes a bijection between the
zeros of the primal and dual equations
on $\HH^K$.
\end{theorem}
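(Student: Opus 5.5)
The plan is to combine the equivariance of $N:=\nabla\mathcal H$, already established in the proof of Theorem~\ref{thm:correspondence}, with the fact that equivariant bijections preserve fixed-point subspaces. First, for $z\in\HH^K$ and $g\in K$ we have $gN(z)=N(gz)=N(z)$, so $N(\HH^K)\subset\HH^K$. By Lemma~\ref{lem:dual-gradient}, $N^{-1}=\nabla\mathcal H^*$ is also $\GG$-equivariant, and the same computation gives $N^{-1}(\HH^K)\subset\HH^K$. Hence $N|_{\HH^K}:\HH^K\to\HH^K$ is a bijection whose inverse is $N^{-1}|_{\HH^K}$. Both maps are continuous, being restrictions of continuous maps to the closed subspace $\HH^K$ with the subspace topology. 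So the restriction is a homeomorphism.

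Next I will check that the primal and dual equations make sense on $\HH^K$. Since $A$ is $\GG$-equivariant, $A^{-1}$ commutes with every $g\in\GG$, and so $A^{-1}(\HH^K)\subset\HH^K$. Also $A$ maps $\DD(A)\cap\HH^K$ into $\HH^K$. Therefore $F$ maps $\DD(A)\cap\HH^K$ into $\HH^K$, and $F^*$ maps $\HH^K$ into $\HH^K$.

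For the bijection of zeros, let $z\in\DD(A)\cap\HH^K$ satisfy $F(z)=0$. By Theorem~\ref{thm:correspondence}, $u=N(z)$ satisfies $F^*(u)=0$, and by the first step $u\in\HH^K$. Conversely, let $u\in\HH^K$ satisfy $F^*(u)=0$. Then $z=N^{-1}(u)=-A^{-1}u$ lies in $\DD(A)$ and solves $F(z)=0$, and it lies in $\HH^K$ by the first step. The two assignments are mutually inverse because $N$ and $N^{-1}$ are, so the zero sets of $F|_{\DD(A)\cap\HH^K}$ and $F^*|_{\HH^K}$ correspond bijectively under $N$.

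There is no real obstacle here. The only point needing care is the domain issue: $A$ is unbounded, so the primal equation on $\HH^K$ must be read on $\DD(A)\cap\HH^K$. Membership of $z=-A^{-1}u$ in both $\DD(A)$ and $\HH^K$ is supplied by the range of $A^{-1}$ together with its equivariance.
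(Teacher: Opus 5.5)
Your proposal is correct and follows the paper's proof essentially step for step. Equivariance of $N=\nabla\mathcal H$ and $N^{-1}=\nabla\mathcal H^*$ gives $N(\HH^K)=\HH^K$ and the restricted homeomorphism, and the zero bijection comes from $A^{-1}(\HH^K)\subset\HH^K$ and $z=-A^{-1}u\in\DD(A)\cap\HH^K$; the only cosmetic difference is that the paper phrases the forward direction through the intertwining identity \eqref{eq:intertwine} and injectivity of $A^{-1}$, whereas you cite Theorem~\ref{thm:correspondence}, which rests on the same computation.
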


\begin{proof}
Set
\[
N=\nabla\mathcal H,
\qquad N^{-1}=\nabla\mathcal H^*.
\]
For $z\in\HH^K$ and $k\in K$,
equivariance gives
\[
kN(z)=N(kz)=N(z).
\]
Similarly, for $u\in\HH^K$,
\[
kN^{-1}(u)=N^{-1}(ku)=N^{-1}(u).
\]
Consequently,
\[
N(\HH^K)=\HH^K,
\qquad
N^{-1}(\HH^K)=\HH^K.
\]
Thus $N|_{\HH^K}$ is a homeomorphism
with inverse $N^{-1}|_{\HH^K}$.

Since $A^{-1}$ is $\GG$-equivariant,
\[
A^{-1}(\HH^K)\subset\HH^K.
\]
By \eqref{eq:intertwine},
\[
F^*(N(z))=A^{-1}F(z),
\qquad z\in\DD(A)\cap\HH^K.
\]
Since $A^{-1}$ is injective,
\[
F(z)=0
\quad\Longleftrightarrow\quad
F^*(N(z))=0.
\]
Conversely, every dual zero $u\in\HH^K$
corresponds to
\[
z=N^{-1}(u)=-A^{-1}u
\in\DD(A)\cap\HH^K.
\]
Hence the primal and dual zeros on
$\HH^K$ correspond bijectively.
\end{proof}

\section{Hessians and negative equivariant representations}\label{sec:hess}

Under condition \eqref{eq:B}, the primal and dual Hessians at zero are given by
\[
P:=A+B,\qquad Q:=A^{-1}+B^{-1}.
\]
The first is unbounded with the same domain as $A$, and the second is a bounded self-adjoint operator. As $B^{-1}\ge b_1^{-1}\Id$ and $A^{-1}$ is compact, $Q$ has only finitely many negative eigenvalues, counting multiplicities, whenever $0\notin\spc(Q)$.

\begin{lemma}
\label{lem:kernel}
Under Assumption~\ref{ass:convex}, let
$z\in\DD(A)$ and $u=\nabla\mathcal H(z)$
be corresponding solutions of the primal
and dual equations.

Suppose that $\mathcal H$ is $C^2$ in a neighborhood of $z$ and that
\[
B_z:=D(\nabla\mathcal H)(z)
\ge c\Id>0.
\]
Define
\[
P_z:=A+B_z,
\qquad
Q_u:=A^{-1}+B_z^{-1}.
\]
Then
\[
D(\nabla\mathcal H^*)(u)=B_z^{-1},
\]
and
\[
\Ker P_z\simeq_L\Ker Q_u,
\qquad
L=\Iso_{\GG}(z)=\Iso_{\GG}(u).
\]
In particular, at the trivial solution,
\[
P=A+B,\qquad Q=A^{-1}+B^{-1},
\]
and
\[
\Ker P\simeq_{\GG}\Ker Q.
\]
\end{lemma}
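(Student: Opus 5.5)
The plan has three steps. First, compute $D(\nabla\mathcal H^*)(u)$ using the inverse function theorem. Second, build an explicit linear map carrying $\Ker P_z$ onto $\Ker Q_u$, modelled on the correspondence \eqref{eq:correspondence}. Third, check that this map commutes with the isotropy group $L$.

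\emph{Derivative of the dual gradient.} Since $\mathcal H$ is $C^2$ near $z$, the map $N=\nabla\mathcal H$ is $C^1$ near $z$. Its derivative $B_z$ is bounded, self-adjoint and satisfies $B_z\ge c\Id$, so it is invertible with $\|B_z^{-1}\|\le c^{-1}$. The inverse function theorem therefore gives a local $C^1$ inverse of $N$ near $u=N(z)$. By Lemma~\ref{lem:dual-gradient}, $N$ is a global homeomorphism, so this local inverse coincides with $\nabla\mathcal H^*=N^{-1}$ on a neighbourhood of $u$. Hence
\[
D(\nabla\mathcal H^*)(u)=(DN(z))^{-1}=B_z^{-1},
\]
and $Q_u=A^{-1}+B_z^{-1}$ is exactly the dual Hessian $D F^*(u)$.

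\emph{Kernel isomorphism.} I will use the linear map $T:=B_z|_{\Ker P_z}$, which is the linearization of $N$ at $z$.
\begin{itemize}
\item \emph{$T$ maps into $\Ker Q_u$.} Take $\xi\in\Ker P_z\subset\DD(A)$, so $A\xi=-B_z\xi$. With $\eta=B_z\xi$ this gives $A^{-1}\eta=-\xi$, and hence $Q_u\eta=-\xi+B_z^{-1}B_z\xi=0$.
\item \emph{$T$ is onto.} Given $\eta\in\Ker Q_u$, set $\xi:=-A^{-1}\eta\in\DD(A)$. Then $B_z^{-1}\eta=\xi$, so $\eta=B_z\xi$, and $A\xi=-\eta=-B_z\xi$. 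Thus $P_z\xi=0$ and $T\xi=\eta$.
\item \emph{$T$ is injective.} This is immediate since $B_z$ is injective.
\end{itemize}
So $T$ is a linear bijection with inverse $-A^{-1}|_{\Ker Q_u}$. Note that $\Ker Q_u$ is finite-dimensional: it is the kernel of a compact perturbation $A^{-1}$ of the invertible operator $B_z^{-1}$.

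\emph{Equivariance.} This is the step that needs care. Equivariance of $N$ gives $N(gx)=gN(x)$ for all $g\in\GG$ and $x$. For $g\in L$ we have $gz=z$, so differentiating at $z$ gives $B_z g=gB_z$. The argument works because $z$ is fixed by $g$; for $g\notin L$ it fails, since differentiating only yields $B_{gz}g=gB_z$. Consequently $T$ commutes with $L$. Likewise $A$ commutes with $L$ on $\DD(A)$, so both $\Ker P_z$ and $\Ker Q_u$ are $L$-invariant, and $T$ is an $L$-equivariant linear isomorphism. This proves $\Ker P_z\simeq_L\Ker Q_u$, with $L=\Iso_{\GG}(z)=\Iso_{\GG}(u)$ by Theorem~\ref{thm:correspondence}.

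\emph{Trivial solution.} At $z=0$ we have $u=0$ and $L=\GG$. Moreover only Fréchet differentiability of $N$ at $0$ together with \eqref{eq:B} is needed there. The derivative of $N^{-1}$ at $0$ equals $B^{-1}$ by the standard inverse-derivative argument, which uses that $N^{-1}$ is continuous and that $\|N(x)\|\ge (b_0/2)\|x\|$ near $0$. The same map $T=B|_{\Ker P}$ then gives $\Ker P\simeq_{\GG}\Ker Q$.
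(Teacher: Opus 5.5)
Your proposal is correct and follows essentially the same route as the paper: the inverse function theorem gives $D(\nabla\mathcal H^*)(u)=B_z^{-1}$, the restriction $B_z|_{\Ker P_z}$ serves as the kernel isomorphism (your inverse $-A^{-1}|_{\Ker Q_u}$ coincides with the paper's $B_z^{-1}$ on $\Ker Q_u$), and $L$-equivariance follows by differentiating $\nabla\mathcal H(gx)=g\nabla\mathcal H(x)$ at the $L$-fixed point $z$. Your extra remarks are correct refinements that the paper leaves implicit: the local inverse agrees with the global $\nabla\mathcal H^*$, $\Ker Q_u$ is finite-dimensional, and at the trivial solution only differentiability at $0$ together with \eqref{eq:B} is needed.
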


\begin{proof}
By the inverse function theorem and
\eqref{eq:dual-gradient},
\[
D(\nabla\mathcal H^*)(u)=[D(\nabla\mathcal H)(z)]^{-1}=B_z^{-1}.
\]
Thus the primal and dual linearizations are respectively
\[
P_z=A+B_z,
\qquad
Q_u=A^{-1}+B_z^{-1}.
\]

For every $h\in\DD(A)$,
\[
\begin{aligned}
Q_uB_zh
&=(A^{-1}+B_z^{-1})B_zh\\
&=A^{-1}B_zh+h\\
&=A^{-1}(A+B_z)h\\
&=A^{-1}P_zh.
\end{aligned}
\]
Consequently,
\[
P_zh=0
\quad\Longrightarrow\quad
Q_u(B_zh)=0.
\]

Conversely, if $Q_uv=0$, then
\[
B_z^{-1}v=-A^{-1}v\in\DD(A).
\]
Hence
\[
P_z(B_z^{-1}v)=0.
\]
Therefore,
\[
B_z:\Ker P_z\longrightarrow\Ker Q_u
\]
is an isomorphism with inverse
$B_z^{-1}$.

Let
\[
L=\Iso_{\GG}(z)=\Iso_{\GG}(u).
\]
Differentiating
\[
\nabla\mathcal H(gz)
=g\nabla\mathcal H(z),
\qquad g\in L,
\]
gives
\[
B_zg=gB_z.
\]
Thus the kernel isomorphism is
$L$-equivariant, and
\[
\Ker P_z\simeq_L\Ker Q_u.
\]

At $z=u=0$, we have
\[
B_0=B,\qquad
P=A+B,\qquad
Q=A^{-1}+B^{-1}.
\]
Since $0$ is fixed by $\GG$, the
kernel isomorphism is $\GG$-equivariant.
\end{proof}

Let $RO(\GG)$ denote the real representation ring. For an invertible self-adjoint, finite-dimensional $\GG$-equivariant operator $T$, let $\Eminus{T}$ be its negative spectral $\GG$-representation. Its class is written $[\Eminus{T}]\in RO(\GG)$.

\begin{theorem}\label{thm:inertia}
Let $V$ be a finite-dimensional orthogonal
$\GG$-representation. Suppose $A=A^*$ and
$B>0$ are $\GG$-equivariant, and that
$A$ and $A+B$ are invertible. Set
\[
P=A+B,\qquad Q=A^{-1}+B^{-1}.
\]
Then
\begin{equation}
\label{eq:ROidentity}
[\Eminus{A}]
=[\Eminus{P}]+[\Eminus{Q}] \quad\text{in }RO(\GG).
\end{equation}
\end{theorem}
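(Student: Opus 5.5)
The plan is to realize both sides of \eqref{eq:ROidentity} as the negative spectral representation of a single $\GG$-equivariant self-adjoint block operator on $V\oplus V$, computed in two ways via Schur complements. Set
\[
M:=\begin{pmatrix} A^{-1} & \Id\\ \Id & -B\end{pmatrix}
\quad\text{on } V\oplus V,
\]
with the diagonal $\GG$-action. Since $A$ and $B$ commute with $\GG$, so do $A^{-1}$ and $B^{-1}$, and $M$ is $\GG$-equivariant and self-adjoint.

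The first step is two equivariant block congruences.

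\emph{Eliminating with respect to the invertible block $A^{-1}$.} Let $S_1=\begin{pmatrix}\Id & -A\\ 0&\Id\end{pmatrix}$. Then
\[
S_1^{*}MS_1=\operatorname{diag}\bigl(A^{-1},\,-B-A\bigr)=\operatorname{diag}(A^{-1},-P).
\]

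\emph{Eliminating with respect to the invertible block $-B$.} Let $S_2=\begin{pmatrix}\Id&0\\ B^{-1}&\Id\end{pmatrix}$. Then
\[
S_2^{*}MS_2=\operatorname{diag}\bigl(A^{-1}+B^{-1},\,-B\bigr)=\operatorname{diag}(Q,-B).
\]

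Both $S_i$ are $\GG$-equivariant isomorphisms. This also shows $M$ is invertible, since $P$ is invertible by hypothesis. Consequently $Q$ is invertible as well, so $\Eminus{Q}$ makes sense.

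The second step is an equivariant Sylvester law: if $T$ is an invertible self-adjoint $\GG$-operator and $S$ is an equivariant isomorphism, then $\Eminus{S^*TS}\iso\Eminus{T}$. I expect this to be the only point needing care. The plan is as follows. Let $W=\Eminus{S^*TS}$. Then $T$ is negative definite on $SW$, which is a $\GG$-subrepresentation. Let $\pi$ be the (equivariant) orthogonal projection onto $\Eminus{T}$. The map $\pi\circ S|_W$ is injective, since its kernel lies in $SW\cap E^+(T)$, where $T$ is both negative definite and $\ge0$. This gives an equivariant injection $W\hookrightarrow\Eminus{T}$. Applying the same argument to $T=(S^{-1})^*(S^*TS)S^{-1}$ gives an injection in the other direction. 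Comparing isotypic multiplicities, which are well defined for finite-dimensional representations of a compact Lie group, the two representations are isomorphic.

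The third step is to combine these. Since $A^{-1}$ has the same eigenvectors as $A$ with reciprocal eigenvalues, $\Eminus{A^{-1}}=\Eminus{A}$. Also $\Eminus{-P}=E^+(P)$, and $\Eminus{-B}=V$ because $B>0$. Therefore
\[
[\Eminus{A}]+[E^+(P)]=[\Eminus{M}]=[\Eminus{Q}]+[V].
\]
Substituting $[V]=[\Eminus{P}]+[E^+(P)]$ and cancelling $[E^+(P)]$ in the abelian group $RO(\GG)$ yields \eqref{eq:ROidentity}.
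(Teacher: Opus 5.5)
Your proof is correct, but it follows a genuinely different route from the paper. The paper never leaves $V$. It sets $T=B^{-1/2}AB^{-1/2}$ and notes that $A$, $P$, $Q$ are positive equivariant congruences (by $B^{\pm1/2}$) of $T$, $T+\Id$, $T^{-1}+\Id$. It then reads the identity off the spectrum of $T$: $\Eminus{T}$ splits into the eigenspaces with $\lambda<-1$, which equal $\Eminus{T+\Id}$, and those with $-1<\lambda<0$, which equal $\Eminus{T^{-1}+\Id}$. This gives an explicit invariant splitting, hence $\Eminus{A}\iso\Eminus{P}\oplus\Eminus{Q}$ directly without any cancellation. It also shows clearly why $0,-1\notin\spc(T)$ are the relevant nondegeneracy conditions. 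On the other hand, it uses $B>0$ essentially (to form $B^{1/2}$), and it only asserts, without proof, that positive equivariant congruence preserves the negative representation.

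Your argument is an equivariant form of Haynsworth's inertia additivity for Schur complements. You double the space, compute the negative representation of the single operator $M$ in two ways, and cancel $[E^+(P)]$ in $RO(\GG)$. It is purely algebraic, with no functional calculus. Your equivariant Sylvester step also proves, for arbitrary equivariant isomorphisms, exactly the fact the paper leaves unproved. The argument generalizes as well: if $B$ is only an invertible self-adjoint equivariant operator, the same two congruences give $[\Eminus{A}]+[\Eminus{B}]=[\Eminus{P}]+[\Eminus{Q}]$, which reduces to the stated identity when $B>0$.

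One minor simplification in the Sylvester step: your two injections already force $\dim W=\dim\Eminus{T}$. So $\pi\circ S|_W$ is itself an equivariant isomorphism, and no isotypic multiplicity count is needed.
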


\begin{proof}
Set
\[
T=B^{-1/2}AB^{-1/2}.
\]
Then
\[
\begin{aligned}
A&=B^{1/2}TB^{1/2},\\
P&=B^{1/2}(T+\Id)B^{1/2},\\
Q&=B^{-1/2}(T^{-1}+\Id)B^{-1/2}.
\end{aligned}
\]

Positive $\GG$-equivariant congruence preserves the isomorphism class of the negative spectral representation.

Consequently,
\[
\begin{aligned}
\Eminus{A}&\iso\Eminus{T},\\
\Eminus{P}&\iso\Eminus{T+\Id},\\
\Eminus{Q}&\iso\Eminus{T^{-1}+\Id}.
\end{aligned}
\]

Since $A$ and $P$ are invertible,
\[
0,-1\notin\spc(T).
\]
Define the $\GG$-invariant spectral subspaces
\[
\begin{aligned}
V_1
&=\bigoplus_{\lambda<-1}\Ker(T-\lambda\Id),\\
V_2 &=\bigoplus_{-1<\lambda<0}\Ker(T-\lambda\Id),
\end{aligned}
\]
where the sums are over the eigenvalues
of $T$.
Hence
\[
\begin{aligned}
\Eminus{T}&=V_1\oplus V_2,\\
\Eminus{T+\Id}&=V_1,\\
\Eminus{T^{-1}+\Id}&=V_2.
\end{aligned}
\]

Therefore,
\[
\begin{aligned}
[\Eminus{A}]
&=[\Eminus{T}]\\
&=[V_1]+[V_2]\\
&=[\Eminus{P}]+[\Eminus{Q}]
\end{aligned}
\]
in $RO(\GG)$.
\end{proof}

\begin{theorem}\label{thm:crossing}
Let $\lambda\mapsto A_\lambda$ be a $C^1$ family of invertible self-adjoint equivariant operators with a common domain and differentiable inverses. Let $\lambda\mapsto B_\lambda$ be a norm-$C^1$ family of positive equivariant isomorphisms, and set
\[
P_\lambda=A_\lambda+B_\lambda,\qquad  Q_\lambda=A_\lambda^{-1}+B_\lambda^{-1}.
\]
At any $\lambda_*$, the isomorphism $z\mapsto u=B_{\lambda_*}z$ identifies the kernels, and their crossing forms satisfy
\[
\big\langle Q'_{\lambda_*}u,u\big\rangle
 =-\big\langle P'_{\lambda_*}z,z\big\rangle
 \quad (z\in\Ker P_{\lambda_*},\ u=B_{\lambda_*}z).
\]
In particular, regular primal and dual crossings have opposite signatures on corresponding invariant subrepresentations.
\end{theorem}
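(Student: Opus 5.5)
The plan is to differentiate the dual operator family directly and then use the kernel relation from Lemma~\ref{lem:kernel}, pointwise in $\lambda$. Writing $A_\lambda^{-1}$ and $B_\lambda^{-1}$ for the inverses, the standard derivative-of-inverse formula gives
\[
Q'_\lambda=-A_\lambda^{-1}A'_\lambda A_\lambda^{-1}-B_\lambda^{-1}B'_\lambda B_\lambda^{-1}.
\]
For $B_\lambda$ this follows from norm-$C^1$ dependence and the Neumann series. For $A_\lambda$ I will use the assumed differentiability of the inverses together with the common domain. Differentiating $A_\lambda^{-1}A_\lambda h=h$ for $h$ in the common domain gives $(A^{-1}_\lambda)'A_\lambda h=-A_\lambda^{-1}A'_\lambda h$, and since $A_\lambda$ maps the domain onto $\HH$, this identifies $(A_\lambda^{-1})'$.

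Fix $\lambda_*$ and drop the subscript. Take $z\in\Ker P$, so $Az=-Bz$, and set $u=Bz$. By Lemma~\ref{lem:kernel}, applied with $B$ in place of $B_z$ (its proof only uses $Q B h=A^{-1}Ph$), the map $z\mapsto Bz$ is a $\GG$-equivariant isomorphism $\Ker P\to\Ker Q$. Two identities will be used:
\[
A^{-1}u=A^{-1}Bz=-z,\qquad B^{-1}u=z.
\]
Substituting these and using self-adjointness of $A^{-1}$ and $B^{-1}$:
\[
\langle Q'u,u\rangle=-\langle A'A^{-1}u,A^{-1}u\rangle-\langle B'B^{-1}u,B^{-1}u\rangle=-\langle A'z,z\rangle-\langle B'z,z\rangle=-\langle P'z,z\rangle,
\]
because $P'=A'+B'$ on the common domain. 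The quadratic forms are meaningful because $z\in\Ker P\subset\DD(A)$, so $A'z$ is defined.

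For the final assertion, $z\mapsto Bz$ commutes with the group action, since $B_{\lambda_*}$ is equivariant. It therefore carries each isotypic or invariant subrepresentation of $\Ker P$ onto the corresponding one of $\Ker Q$. The crossing forms are invariant quadratic forms that correspond to each other with a minus sign, so nondegeneracy (regularity) is preserved and the signatures on corresponding subrepresentations are negatives of each other.

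The main obstacle is justifying the formula for $(A_\lambda^{-1})'$ and the meaning of $A'_\lambda$ on the kernel when the operators are unbounded. I will handle this by interpreting $A'_\lambda$ as the derivative of $A_\lambda$ as bounded maps from the common domain, with its graph norm, into $\HH$, and differentiating the identity $A_\lambda A_\lambda^{-1}=\Id$ in that sense.
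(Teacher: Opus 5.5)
Your proposal is correct and follows the paper's proof essentially verbatim. Both use the derivative-of-inverse formula for $Q'$, the substitutions $A^{-1}u=-z$ and $B^{-1}u=z$, and the equivariance of $B_{\lambda_*}$ to transfer the signature statement. The paper also invokes polarization to get the bilinear identity, which your quadratic-form argument covers implicitly.

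Your justification of $(A_\lambda^{-1})'$, by differentiating $A_\lambda^{-1}A_\lambda h=h$ on the common domain, fills in a step the paper leaves implicit. One small correction: surjectivity of $B:\Ker P\to\Ker Q$ needs more than $QBh=A^{-1}Ph$. It also uses the converse step from Lemma~\ref{lem:kernel}, namely $Qv=0\Rightarrow B^{-1}v=-A^{-1}v\in\DD(A)$.
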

\begin{proof}
At $\lambda_*$, write $u=B_{\lambda_*}z$, with $z\in\Ker P_{\lambda_*}$. Then
\[
 A_{\lambda_*}^{-1}u=-z,\qquad
 B_{\lambda_*}^{-1}u=z.
\]
Differentiation yields
\[
 Q'=-A^{-1}A'A^{-1}-B^{-1}B'B^{-1}.
\]
Hence, at $\lambda_*$,
\[
\begin{aligned}
 \langle Q'u,u\rangle
 &=-\langle A'A^{-1}u,A^{-1}u\rangle
   -\langle B'B^{-1}u,B^{-1}u\rangle\\
 &=-\langle A'z,z\rangle-\langle B'z,z\rangle\\
 &=-\langle P'z,z\rangle.
\end{aligned}
\]
Polarization gives, for $u_i=B_{\lambda_*}z_i$,
\[
 \langle Q'u_1,u_2\rangle=-\langle P'z_1,z_2\rangle.
\]
Thus the equivariant crossing forms are negatives of one another.
\end{proof}

\begin{definition}[Crossing forms and equivariant spectral flow]
Let $\{L_\lambda\}_{\lambda\in[a,b]}$ be a
$C^1$ family of self-adjoint
$\GG$-equivariant Fredholm operators with a common dense domain, differentiable in the corresponding graph norms, and with invertible endpoints.

A parameter $\lambda_j\in(a,b)$ is a crossing if
\[
\Ker L_{\lambda_j}\ne\{0\}.
\]
The crossing form is defined by
\[
\mathfrak C_L(\lambda_j)[v]
=\langle\dot L_{\lambda_j}v,v\rangle, \qquad v\in\Ker L_{\lambda_j}.
\]
A crossing is regular if its crossing form is nondegenerate.

Suppose that the family has only finitely
many regular crossings $\lambda_1,\ldots,\lambda_m$.
The equivariant spectral flow is defined by
\[
\operatorname{sf}_{\GG}(L_\lambda)
:=\sum_{j=1}^m
\left(
[E^+(\mathfrak C_L(\lambda_j))]
-[E^-(\mathfrak C_L(\lambda_j))]
\right) \in RO(\GG),
\]
where $E^\pm(\mathfrak C_L(\lambda_j))$ denote the positive and negative spectral $\GG$-representations of the crossing form.
\end{definition}

We apply the preceding definition to
\[
P_\lambda=A_\lambda+B_\lambda,
\qquad
Q_\lambda=A_\lambda^{-1}+B_\lambda^{-1}.
\]

Since
\[
Q_\lambda B_\lambda=A_\lambda^{-1}P_\lambda,
\]
the map
\[
B_\lambda:
\Ker P_\lambda\longrightarrow
\Ker Q_\lambda
\]
is a $\GG$-equivariant isomorphism for every $\lambda$. Hence the two families have the same crossing parameters.

\begin{corollary}\label{cor:spectralflow}
Let $P_\lambda$ and $Q_\lambda$ be as above and satisfy the hypotheses of the preceding definition. Then
\[
\operatorname{sf}_{\GG}(Q_\lambda)=-\operatorname{sf}_{\GG}(P_\lambda).
\]
\end{corollary}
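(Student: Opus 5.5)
The plan is to compare the two spectral flows crossing by crossing, using the kernel identification noted just before the statement together with Theorem~\ref{thm:crossing}. For every $\lambda$, the identity $Q_\lambda B_\lambda=A_\lambda^{-1}P_\lambda$ shows that $B_\lambda:\Ker P_\lambda\to\Ker Q_\lambda$ is a $\GG$-equivariant isomorphism. Hence the two families have exactly the same crossing parameters $\lambda_1,\dots,\lambda_m\in(a,b)$. Invertibility of the endpoints likewise transfers: $P_a,P_b$ are invertible if and only if $Q_a,Q_b$ are.

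Next I would fix a crossing $\lambda_j$ and write $S:=B_{\lambda_j}|_{\Ker P_{\lambda_j}}:\Ker P_{\lambda_j}\to\Ker Q_{\lambda_j}$. By Theorem~\ref{thm:crossing} and polarization,
\[
\mathfrak C_Q(\lambda_j)[Sz_1,Sz_2]=-\,\mathfrak C_P(\lambda_j)[z_1,z_2],\qquad z_1,z_2\in\Ker P_{\lambda_j}.
\]
So the quadratic form $\mathfrak C_Q(\lambda_j)$, pulled back by the equivariant isomorphism $S$, equals $-\mathfrak C_P(\lambda_j)$. Two consequences follow.

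First, regularity transfers: $\mathfrak C_P(\lambda_j)$ is nondegenerate exactly when $\mathfrak C_Q(\lambda_j)$ is.

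Second, I would identify the positive and negative spectral representations. This is the step I expect to need the most care, because spectral subspaces of a form are defined through the inner product, and $S$ is not an isometry. I would avoid computing eigenvectors and instead use the equivariant Sylvester law, the same principle used in Theorem~\ref{thm:inertia}: for a nondegenerate $\GG$-invariant symmetric form on a finite-dimensional representation, the class $[E^\pm]\in RO(\GG)$ equals the class of any maximal invariant positive (respectively negative) definite subrepresentation. It is therefore invariant under equivariant congruence.

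To justify this, I would decompose the space into isotypic components. On each component, the form becomes a Hermitian form over $\R$, $\mathbb C$ or $\mathbb H$ on the multiplicity space, and the ordinary inertia law applies there. Applying this with $S$ gives
\[
E^\pm(\mathfrak C_Q(\lambda_j))\iso S\big(E^\mp(\mathfrak C_P(\lambda_j))\big)\iso E^\mp(\mathfrak C_P(\lambda_j)).
\]

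Finally, substituting into the definition of the equivariant spectral flow gives
\[
\operatorname{sf}_{\GG}(Q_\lambda)=\sum_{j=1}^m\Big([E^-(\mathfrak C_P(\lambda_j))]-[E^+(\mathfrak C_P(\lambda_j))]\Big)=-\operatorname{sf}_{\GG}(P_\lambda).
\]
The hypotheses needed to apply the definition to $Q_\lambda$ are covered as follows:
\begin{itemize}
\item $Q_\lambda$ is a $C^1$ family of bounded self-adjoint operators, since the inverses are assumed differentiable.
\item $Q_\lambda$ is Fredholm at crossings, because $\Ker Q_\lambda\simeq\Ker P_\lambda$ is finite-dimensional and $Q_\lambda=A_\lambda^{-1}+B_\lambda^{-1}$ is a compact perturbation of an invertible operator whenever $A_\lambda^{-1}$ is compact.
\item The finiteness of crossings and their regularity were established above.
\end{itemize}
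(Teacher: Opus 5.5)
Your proof is correct and follows the paper's route: $B_{\lambda_j}$ identifies the kernels, Theorem~\ref{thm:crossing} with polarization gives $\mathfrak C_Q(\lambda_j)[Sz_1,Sz_2]=-\mathfrak C_P(\lambda_j)[z_1,z_2]$, so $E^\pm$ swap at each crossing, and you sum over crossings. Your equivariant Sylvester-law justification of the swap is needed because $S$ is not an isometry, and it fills in a step the paper asserts with a bare ``Consequently''; your check of the hypotheses for $Q_\lambda$ is extra, since the statement already assumes them.
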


\begin{proof}
By Theorem~\ref{thm:crossing}, the map $z\mapsto B_{\lambda_j}z$ identifies the kernels and satisfies
\[
\mathfrak C_Q(\lambda_j)[B_{\lambda_j}z]=-\mathfrak C_P(\lambda_j)[z].
\]
Consequently,
\[
\begin{aligned}
[E^+(\mathfrak C_Q(\lambda_j))]
&=[E^-(\mathfrak C_P(\lambda_j))],\\
[E^-(\mathfrak C_Q(\lambda_j))]
&=[E^+(\mathfrak C_P(\lambda_j))].
\end{aligned}
\]
Summing over all crossings gives
\[
\operatorname{sf}_{\GG}(Q_\lambda)=-\operatorname{sf}_{\GG}(P_\lambda).
\]
\end{proof}

\section{Equivariant degree comparison in finite dimensions}\label{sec:finite-degree}

Let $U(\GG)$ be the Euler ring of a compact Lie group $\GG$, equipped with multiplication $\starprod$ and unit $\mathbb I_{\GG}$. For a finite-dimensional orthogonal $\GG$-representation $W$ set
\[
\Dg(W)=\graddeg(-\Id_W,B(W))\in U(\GG).
\]
Here $B(W)$ is the open unit ball and $\graddeg$ denotes the equivariant gradient degree. We use its homotopy invariance, product property, and normalization; see \cite{Geba1997,RuanRybicki2008}.

For a self-adjoint equivariant isomorphism $T:W\to W$,
\[
\graddeg(T,B(W))=\Dg(\Eminus{T}).
\]

\begin{theorem}
\label{thm:finite-degree}
Under the hypotheses of Theorem~\ref{thm:inertia},
\begin{equation}
\label{eq:product-degree}
\graddeg(A,B(V))
=\graddeg(P,B(V))\starprod\graddeg(Q,B(V)).
\end{equation}
\end{theorem}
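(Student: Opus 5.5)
We reduce everything to negative spectral representations and then use the multiplicativity of $\Dg$. By the formula preceding the statement, for any self-adjoint equivariant isomorphism $T$ of $V$ we have $\graddeg(T,B(V))=\Dg(\Eminus{T})$. The hypotheses of Theorem~\ref{thm:inertia} make $A$ and $P=A+B$ invertible. Since $B>0$ and $A$ is invertible, the factorization $Q=B^{-1/2}(T^{-1}+\Id)B^{-1/2}$ with $T=B^{-1/2}AB^{-1/2}$ shows that $Q$ is invertible exactly when $-1\notin\spc(T)$, and this follows from invertibility of $P$. Hence all three degrees in \eqref{eq:product-degree} are defined. The left side equals $\Dg(\Eminus{A})$, and the right side equals $\Dg(\Eminus{P})\starprod\Dg(\Eminus{Q})$.

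The key structural step is the identity
\[
\Dg(W_1\oplus W_2)=\Dg(W_1)\starprod\Dg(W_2)
\]
for finite-dimensional orthogonal $\GG$-representations $W_1,W_2$. This follows from the product property of $\graddeg$. On $W_1\oplus W_2$, the map $-\Id_{W_1\oplus W_2}=(-\Id_{W_1})\times(-\Id_{W_2})$ is a product of gradient maps. The unit ball $B(W_1\oplus W_2)$ can be replaced by $B(W_1)\times B(W_2)$ using excision, since $0$ is the only zero in both regions. The product property then yields the splitting.

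We also need $\Dg(W)$ to depend only on the isomorphism class of $W$. If $\phi:W\to W'$ is an equivariant orthogonal isomorphism, it conjugates $-\Id_W$ to $-\Id_{W'}$ and maps balls to balls. Equivariant gradient degree is invariant under such equivariant isometries, and this can be deduced from the same standard properties.

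With these two facts, Theorem~\ref{thm:inertia} supplies the decomposition $\Eminus{A}\iso V_1\oplus V_2$ with $V_1\iso\Eminus{P}$ and $V_2\iso\Eminus{Q}$; this is the concrete form of \eqref{eq:ROidentity} established in its proof. Therefore
\[
\graddeg(A,B(V))=\Dg(V_1\oplus V_2)=\Dg(V_1)\starprod\Dg(V_2)=\graddeg(P,B(V))\starprod\graddeg(Q,B(V)).
\]

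The main obstacle is the multiplicativity of $\Dg$ under direct sums, specifically matching the domains in the product property. I would handle it by a homotopy/excision argument: the linear isomorphism $-\Id$ has no zeros on the boundary of any ball or product of balls, so the degree is independent of the chosen bounded invariant neighbourhood of $0$. As a fallback, if one only had the identity \eqref{eq:ROidentity} in $RO(\GG)$, it would suffice to note that $\Dg$ extends to a homomorphism from the additive monoid of representations to the multiplicative monoid of $U(\GG)$. Equality of classes in $RO(\GG)$ for genuine representations implies isomorphism, since $RO(\GG)$ is the Grothendieck group of a semisimple category, so the result still follows.
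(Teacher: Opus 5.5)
Your proposal is correct and follows essentially the paper's proof. You write each linear degree as $\Dg$ of its negative spectral representation, use the decomposition $\Eminus{A}\iso\Eminus{P}\oplus\Eminus{Q}$ from Theorem~\ref{thm:inertia}, and split via the product property; the points you check along the way (invertibility of $Q$ via $-1\notin\spc(T)$, excision from the unit ball to a product of balls, and invariance of $\Dg$ under equivariant isometries) are left implicit in the paper.
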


\begin{proof}
For every self-adjoint equivariant isomorphism $T$,
\[
\graddeg(T,B(V))=\Dg(\Eminus T).
\]
By \eqref{eq:ROidentity} and the product property,
\[
\begin{aligned}
\graddeg(A,B(V))
&=\Dg(\Eminus A)\\
&=\Dg(\Eminus P\oplus\Eminus Q)\\
&=\Dg(\Eminus P)\starprod\Dg(\Eminus Q)\\
&=\graddeg(P,B(V))\starprod\graddeg(Q,B(V)).
\end{aligned}
\]
\end{proof}

\begin{remark}
The primal and dual degrees are generally different. By \eqref{eq:product-degree}, their difference is
determined by the degree of the reference operator
$A$. This motivates the definition of the relative
primal degree
\[
\rel(P;A):=\graddeg(A,B(V))\starprod\graddeg(P,B(V))^{-1}.
\]
Since the linear degrees are units,
\eqref{eq:product-degree} implies
\[
\rel(P;A)=\graddeg(Q,B(V)).
\]
Thus the relative primal degree coincides with the dual degree. This normalization is particularly useful for strongly indefinite problems, where the ordinary primal degree may not be defined, while the dual Hessian has finite Morse index.
\end{remark}

The following example illustrates the role of the reference degree in the primal--dual comparison. Even in one dimension, the primal and dual degrees may differ, while the relative primal degree agrees with the dual degree.

\begin{example}
\label{ex:one-dimensional}
Let $\GG$ be the trivial group, $V=\R$,
$A=-1$, and $B=2$. Then
\[
P=1,\qquad Q=-1+\frac12=-\frac12.
\]
Thus
\[
\deg(P,B_1)=1,\qquad \deg(Q,B_1)=-1,
\]
whereas $\deg(A,B_1)=-1$. The degrees of the original and dual equations are different, but
\[
\deg(A,B_1)\deg(P,B_1)^{-1}=\deg(Q,B_1)=-1.
\]
Thus the primal and dual degrees are not equal without the reference factor.
\end{example}

The following theorem describes how the dual degree changes under such shifts and identifies the contribution of the reference operator.

\begin{theorem}\label{thm:shift}
Let $\GG$ be a compact Lie group, $V$ a finite-dimensional orthogonal representation, and let $P=A+B$ be self-adjoint and invertible, where $B>0$. Suppose two scalar shifts $\alpha_1,\alpha_2$ make
\[
A_i=A+\alpha_i\Id,\qquad B_i=B-\alpha_i\Id>0
 \qquad (i=1,2)
\]
self-adjoint invertible reference operators and positive nonlinear Hessians, respectively. Define $Q_i=A_i^{-1}+B_i^{-1}$. Then
\begin{equation}
\label{eq:shift-compare}
 \Dg(\Eminus{Q_1})\starprod\Dg(\Eminus{Q_2})^{-1}
 =\Dg(\Eminus{A_1})\starprod\Dg(\Eminus{A_2})^{-1}.
\end{equation}
In particular, if $A+\alpha\Id$ remains nonsingular along a path joining the two shifts, the two dual degrees coincide. If the shift crosses a resonance of the reference operator, the correction factor in \eqref{eq:shift-compare} is required.
\end{theorem}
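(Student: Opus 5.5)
The plan is to apply Theorem~\ref{thm:inertia} separately to each shifted pair $(A_i,B_i)$. The key observation is that the primal Hessian does not depend on the shift:
\[
A_i+B_i=A+\alpha_i\Id+B-\alpha_i\Id=A+B=P,\qquad i=1,2.
\]
By hypothesis, $A_i$ is self-adjoint and invertible, $B_i>0$, and $P$ is invertible. These are exactly the hypotheses of Theorem~\ref{thm:inertia}. That theorem gives, for $i=1,2$,
\[
[\Eminus{A_i}]=[\Eminus{P}]+[\Eminus{Q_i}]\quad\text{in }RO(\GG),
\]
and hence $\Eminus{A_i}\iso\Eminus{P}\oplus\Eminus{Q_i}$.

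One small point needs care here. An equality of classes in $RO(\GG)$ gives an isomorphism of actual representations, because finite-dimensional orthogonal representations of a compact Lie group satisfy cancellation: the decomposition into irreducibles is unique. Since $\Dg(W)$ depends only on the isomorphism class of $W$ (by equivariant invariance of the degree), the product property used in the proof of Theorem~\ref{thm:finite-degree} then yields
\[
\Dg(\Eminus{A_i})=\Dg(\Eminus{P})\starprod\Dg(\Eminus{Q_i}),\qquad i=1,2.
\]

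Next we use that each $\Dg(W)$ is a unit in $U(\GG)$, as stated in the Remark following Theorem~\ref{thm:finite-degree}. The standard fact behind this is that $\Dg(W)\starprod\Dg(W)=\mathbb I_{\GG}$, because $\Dg(W\oplus W)$ is the degree of $-\Id$ on a complex-type representation, which can be homotoped to $\Id$. We also use that $U(\GG)$ is commutative. Dividing the $i=1$ identity by the $i=2$ identity, the common factor $\Dg(\Eminus{P})$ cancels, leaving
\[
\Dg(\Eminus{A_1})\starprod\Dg(\Eminus{A_2})^{-1}
=\Dg(\Eminus{Q_1})\starprod\Dg(\Eminus{Q_2})^{-1},
\]
which is \eqref{eq:shift-compare}.

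For the ``in particular'' clause, suppose $A+\alpha\Id$ is invertible for all $\alpha$ on the segment joining $\alpha_1$ and $\alpha_2$. No eigenvalue of $A+\alpha\Id$ crosses zero along the segment. Each isotypic component of $A$ is preserved by the shift, so the negative spectral subspace varies continuously as a $\GG$-invariant subspace, and therefore $\Eminus{A_1}\iso\Eminus{A_2}$. Consequently the right side of \eqref{eq:shift-compare} equals $\mathbb I_{\GG}$, and the two dual degrees coincide. If instead the path crosses an eigenvalue $-\alpha$ of $A$, the negative spectral representation changes by the corresponding eigenspace. In that case the correction factor is genuinely present and equals $\Dg$ of that eigenspace, or its inverse depending on the direction of crossing.

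The step I expect to be the main obstacle is justifying the unit and commutativity properties of $U(\GG)$ used for the cancellation, together with passing from the $RO(\GG)$ identity to the multiplicative identity. Both are standard facts about the equivariant gradient degree (see \cite{Geba1997,RuanRybicki2008}), and I would cite them rather than reprove them.
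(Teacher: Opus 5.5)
Your skeleton is exactly the paper's proof. Both shifted pairs have the same primal Hessian $A_i+B_i=P$, so Theorem~\ref{thm:inertia} and the product property give $\Dg(\Eminus{A_i})=\Dg(\Eminus{P})\starprod\Dg(\Eminus{Q_i})$, and the common factor $\Dg(\Eminus{P})$ cancels. Your treatment of the resonance-free case is an acceptable substitute for the paper's appeal to homotopy invariance: the space $\Eminus{A+\alpha\Id}=\bigoplus_{\mu<-\alpha}\Ker(A-\mu\Id)$ simply does not change when no eigenvalue of $A$ lies between $-\alpha_2$ and $-\alpha_1$.

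What fails is your justification of the unit property. The identity $\Dg(W)\starprod\Dg(W)=\mathbb I_{\GG}$ is false once $\GG$ has a circle factor. Take $\GG=S^1$ acting on $V_k=\mathbb C$ by $z\mapsto e^{ik\theta}z$. Then $\mathfrak d_{S^1}(V_k)=(S^1)-(\Z_k)$, and $(\Z_k)\starprod(\Z_k)=0$ in $U(S^1)$, because the orbit space of $S^1/\Z_k\times S^1/\Z_k$ under the diagonal action is a circle, of Euler characteristic zero. Hence
\[
\mathfrak d_{S^1}(V_k)\starprod\mathfrak d_{S^1}(V_k)=(S^1)-2(\Z_k)\neq(S^1),
\]
whereas the actual inverse is $(S^1)+(\Z_k)$.

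The homotopy $t\mapsto e^{it}\Id$ that you invoke on a complex-type representation is not admissible for the gradient degree. It leaves the class of self-adjoint (gradient) operators, and every path of equivariant self-adjoint operators from $-\Id$ to $\Id$ passes through a singular one. Applied consistently, your reasoning would give $\Dg(W^{+}_{2,1})=\mathbb I_{\GG}$ for the complex-type mode in Example~\ref{ex:D5-ring}, and hence $\omega_{\GG}(\lambda_*)=0$ there.

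The involution identity is valid when $\GG$ is finite, since the marks of $\Dg(W)$ are then $(-1)^{\dim W^H}$, but not in general. For a general compact Lie group, invertibility of $\Dg(W)$ in $U(\GG)$ is a separate, genuine property of the Euler-ring-valued gradient degree (see \cite{Geba1997,GolebiewskaRybicki2011}). It is exactly what the paper relies on when it cancels ``the common unit.'' Cite that property, together with commutativity of $U(\GG)$, instead of the square-equals-identity argument, and your proof is complete.
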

\begin{proof}
For $i=1,2$,
\[
 A_i+B_i=P,
 \qquad \Dg(\Eminus{Q_i})
 =\Dg(\Eminus{A_i})\starprod\Dg(\Eminus P)^{-1}.
\]
Thus, cancelling the common unit,
\[
\begin{aligned}
 &\Dg(\Eminus{Q_1})\starprod\Dg(\Eminus{Q_2})^{-1}\\
 &\qquad=\Dg(\Eminus{A_1})\starprod
             \Dg(\Eminus{A_2})^{-1}.
\end{aligned}
\]
If $A+\alpha\Id$ is invertible along $\alpha\in[\alpha_1,\alpha_2]$, homotopy invariance gives
\[
 \Dg(\Eminus{A_1})=\Dg(\Eminus{A_2}),
 \qquad \Dg(\Eminus{Q_1})=\Dg(\Eminus{Q_2}).
\]
\end{proof}
The preceding results concern linear operators. For nonlinear
Clarke dual pairs, the same linear comparison applies to the
Hessians at nondegenerate fixed solutions. Additivity then gives
a local formula without requiring an inverse of the total
nonlinear degree.
\begin{theorem}
\label{thm:additive-local}
Let $\GG$ be a compact Lie group and $V$ a finite-dimensional orthogonal $\GG$-representation. Suppose Assumption~\ref{ass:convex} holds and
$A:V\to V$ is an invertible self-adjoint $\GG$-equivariant operator.

Let $\mathcal O\subset V$ be a bounded invariant
open set such that
\[
F^{-1}(0)\cap\mathcal O
=\{z_1,\ldots,z_s\},
\qquad
F(z)\ne0\quad(z\in\partial\mathcal O).
\]
Assume that each $z_i$ is fixed by $\GG$,
$\mathcal H$ is $C^2$ near $z_i$, and
\[
B_i:=D(\nabla\mathcal H)(z_i)
\ge c_i\Id>0.
\]
Suppose further that
\[
P_i:=A+B_i
\]
is invertible for every $i=1,\ldots,s$.

Set
\[
u_i=\nabla\mathcal H(z_i),\qquad
Q_i=A^{-1}+B_i^{-1},\qquad
\mathcal O^*=\nabla\mathcal H(\mathcal O).
\]
Then $\mathcal O^*$ is an admissible bounded
invariant open set, each $Q_i$ is invertible,
and
\[
\begin{aligned}
\graddeg(F,\mathcal O)
&=\sum_{i=1}^s
\Dg(\Eminus{P_i}),\\
\graddeg(F^*,\mathcal O^*)
&=\sum_{i=1}^s
\Dg(\Eminus{Q_i})\\
&=\sum_{i=1}^s
\Dg(\Eminus A)\starprod
\Dg(\Eminus{P_i})^{-1}.
\end{aligned}
\]
\end{theorem}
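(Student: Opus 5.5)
The plan is to reduce everything to the local, linearized picture at each $z_i$, using excision and additivity of $\graddeg$ together with the linear results of Sections~3 and~4.

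\textbf{Step 1: admissibility of $\mathcal O^*$.} Set $N=\nabla\mathcal H$. Since $N$ is a $\GG$-equivariant homeomorphism of $V$, the set $\mathcal O^*=N(\mathcal O)$ is open and invariant, and $\partial\mathcal O^*=N(\partial\mathcal O)$. Its closure $N(\overline{\mathcal O})$ is compact, so $\mathcal O^*$ is bounded. By the intertwining identity \eqref{eq:intertwine},
\[
F^*(N(z))=A^{-1}F(z).
\]
Because $A^{-1}$ is injective, $F^*$ does not vanish on $\partial\mathcal O^*$. Theorem~\ref{thm:correspondence} then gives
\[
(F^*)^{-1}(0)\cap\mathcal O^*=\{u_1,\dots,u_s\},
\]
and each $u_i$ is $\GG$-fixed because $\Iso_{\GG}(u_i)=\Iso_{\GG}(z_i)=\GG$.

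\textbf{Step 2: nondegeneracy of the dual Hessians.} By Lemma~\ref{lem:kernel}, $D(\nabla\mathcal H^*)(u_i)=B_i^{-1}$. Hence $DF^*(u_i)=Q_i$, and $\Ker Q_i\simeq_{\GG}\Ker P_i=0$, so each $Q_i$ is invertible.

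\textbf{Step 3: localization.} Choose disjoint small invariant balls $B_\varepsilon(z_i)$ in $\mathcal O$ and $B_\varepsilon(u_i)$ in $\mathcal O^*$. These balls are invariant since the centres are $\GG$-fixed and the action is orthogonal. By excision and additivity, $\graddeg(F,\mathcal O)$ is the sum of the local degrees, and likewise for $F^*$.

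\textbf{Step 4: linearization at each point.} On each ball, use the linear homotopy
\[
t\mapsto tF(z)+(1-t)P_i(z-z_i).
\]
This is a gradient homotopy of the invariant functionals $t\mathcal A+(1-t)\tfrac12\langle P_i(z-z_i),z-z_i\rangle$. Since $P_i$ is invertible and $F$ is differentiable at $z_i$ with
\[
F(z)=P_i(z-z_i)+o(|z-z_i|),
\]
there are no zeros on $\partial B_\varepsilon(z_i)$ for $\varepsilon$ small. Homotopy invariance and translation to the origin, which is legitimate because $z_i$ is fixed, give the local degree $\graddeg(P_i,B(V))=\Dg(\Eminus{P_i})$. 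The same argument with $\Psi$, using $C^2$-regularity of $\mathcal H^*$ near $u_i$ (which follows from the inverse function theorem), gives local degree $\Dg(\Eminus{Q_i})$ at each $u_i$.

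\textbf{Step 5: comparison.} Finally, Theorem~\ref{thm:inertia} applied to $(A,B_i)$ gives
\[
[\Eminus A]=[\Eminus{P_i}]+[\Eminus{Q_i}],
\]
so by the product property, as in Theorem~\ref{thm:finite-degree},
\[
\Dg(\Eminus A)=\Dg(\Eminus{P_i})\starprod\Dg(\Eminus{Q_i}).
\]
Since linear degrees are units in $U(\GG)$, this yields $\Dg(\Eminus{Q_i})=\Dg(\Eminus A)\starprod\Dg(\Eminus{P_i})^{-1}$. Summing over $i$ gives the last formula.

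\textbf{Main obstacle.} The delicate point is Step 4, the linearization. It needs $F$ and $F^*$ to be $C^1$ (or at least Fréchet differentiable with a uniform remainder) near the zeros, so that the boundary estimate holds on a fixed small sphere. For $F$ this comes from the $C^2$ hypothesis on $\mathcal H$. For $F^*$, I would first establish that $N^{-1}$ is $C^1$ near $u_i$ by the inverse function theorem, using $B_i\ge c_i\Id$. The remainder of the argument is then symmetric to the primal case.
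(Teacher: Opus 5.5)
Your proposal is correct and follows the same route as the paper's proof. You show that $\mathcal O^*=\nabla\mathcal H(\mathcal O)$ is admissible because $\nabla\mathcal H$ is an equivariant homeomorphism, and you get invertibility of $Q_i$ from the kernel identification in Lemma~\ref{lem:kernel}. You then localize and linearize at each $\GG$-fixed zero, and conclude with the comparison of Theorem~\ref{thm:finite-degree}, which you re-derive from Theorem~\ref{thm:inertia} and the product property. The only difference is that you spell out the linearization homotopy and the local $C^1$ regularity of $\nabla\mathcal H^*$ near $u_i$, which the paper compresses into ``by local linearization and additivity.''
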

\begin{proof}
Set $N=\nabla\mathcal H$ and $u_i=N(z_i)$. By Theorem~\ref{thm:correspondence}, $N$ identifies the primal and dual zeros. Since $N$ is an equivariant homeomorphism, $\mathcal O^*=N(\mathcal O)$ is admissible.

By Lemma~\ref{lem:kernel},
\[
DF(z_i)=P_i,\qquad
DF^*(u_i)=Q_i,\qquad
\Ker Q_i=B_i\Ker P_i=\{0\}.
\]
Thus, by local linearization and additivity,
\[
\begin{aligned}
\deg^\nabla_{\GG}(F,\mathcal O)
&=\sum_{i=1}^s\Dg(\Eminus{P_i}),\\
\deg^\nabla_{\GG}(F^*,\mathcal O^*)
&=\sum_{i=1}^s\Dg(\Eminus{Q_i}).
\end{aligned}
\]
Finally, Theorem~\ref{thm:finite-degree} gives
\[
\Dg(\Eminus{Q_i})
=
\Dg(\Eminus A)\starprod
\Dg(\Eminus{P_i})^{-1},
\]
which proves the result.
\end{proof}

\begin{remark}
Theorem~\ref{thm:additive-local} is a sum of local identities at
nondegenerate $\GG$-fixed zeros. It makes no assertion about
an inverse of $\deg^\nabla_{\GG}(F,\mathcal O)$ and does not
cover degenerate zeros or nontrivial critical orbits. In
particular, fixed-point Brouwer degrees alone do not establish
a general nonlinear identity in the full Euler ring $U(\GG)$.
\end{remark}

\section{Infinite-dimensional normalized linear index}\label{sec:infinite-degree}

The operator $A$ is now the strongly indefinite operator introduced in Section~\ref{sec:framework}. Its inverse is compact, and $B$ satisfies \eqref{eq:B}. The positive operator $B^{-1}$ is not required to commute with $A^{-1}$.

Let $P_N$ be increasing finite-rank orthogonal spectral projections of $A$ such that $P_N\to\Id$ strongly, each $P_N$ is $\GG$-equivariant, and $P_N\HH$ is invariant under $A$. Since $A$ has discrete spectrum of finite multiplicity and commutes with $\GG$, such a choice is possible. Write
\[
V_N=P_N\HH,\qquad
 A_N=A|_{V_N},\qquad B_N=P_NB|_{V_N}.
\]
$B_N\ge b_0\Id_{V_N}$.

\begin{definition}[Galerkin-normalized primal local index]\label{def:relative}
Assuming $A+B$ is invertible, define, when the right side stabilizes,
\[
\mathfrak D_{\GG}(A+B;A)
 =\lim_{N\to\infty}\left(
 \graddeg(A_N,B(V_N))\starprod
 \graddeg(A_N+B_N,B(V_N))^{-1}\right).
\]
Here the limit means that the degree is constant for all sufficiently large $N$.
\end{definition}

\begin{theorem}\label{thm:infinite-linear}
Suppose $A$ is self-adjoint, invertible, $\GG$-equivariant, with compact inverse, and $B$ is a bounded self-adjoint $\GG$-operator with $b_0\Id\le B\le b_1\Id$. If $A+B$ is invertible, then the index in Definition~\ref{def:relative} is well defined. The dual Hessian
\[
Q=A^{-1}+B^{-1}
\]
is invertible and has finite-dimensional negative spectral representation. Moreover,
\[
\mathfrak D_{\GG}(A+B;A)=\Dg(\Eminus{Q}).
\]
\end{theorem}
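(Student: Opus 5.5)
Our approach is to reduce everything to the finite-dimensional identity of Theorem~\ref{thm:finite-degree}, applied on each Galerkin space $V_N$, and then to show that the dual Galerkin Hessians $Q_N:=A_N^{-1}+B_N^{-1}$ have negative spectral representations that stabilize to $\Eminus{Q}$. Since $A_N$ is the restriction of $A$ to an invariant spectral subspace, it is invertible and self-adjoint. $B_N=P_NB|_{V_N}$ is equivariant with $b_0\Id\le B_N\le b_1\Id$. Once we know $A_N+B_N$ is invertible for large $N$, Theorem~\ref{thm:inertia} and Theorem~\ref{thm:finite-degree} on $V_N$ give
\[
\graddeg(A_N,B(V_N))\starprod\graddeg(A_N+B_N,B(V_N))^{-1}=\Dg(\Eminus{Q_N}),
\]
because the linear degrees are units. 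So the whole statement reduces to two claims. First, $Q$ is invertible with finite-dimensional negative part. Second, $\Eminus{Q_N}\iso\Eminus{Q}$ for all large $N$.

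For the first claim, the operator $Q=A^{-1}+B^{-1}$ is a compact perturbation of $B^{-1}\ge b_1^{-1}\Id$. Hence its spectrum below $b_1^{-1}/2$ consists of finitely many eigenvalues of finite multiplicity, and $\Eminus{Q}$ is a finite-dimensional $\GG$-representation. Invertibility follows from the identity $QB=A^{-1}(A+B)$ on $\DD(A)$, as in Lemma~\ref{lem:kernel}. It gives $\Ker Q=B\Ker(A+B)=0$, and a Fredholm-index-zero argument (compact plus invertible) upgrades injectivity to invertibility.

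For the second claim we compare $Q_N$ with the compression $P_NQ|_{V_N}=A_N^{-1}+P_NB^{-1}|_{V_N}$. Here $A^{-1}$ commutes with $P_N$, so $A_N^{-1}=P_NA^{-1}|_{V_N}$. The key difficulty is that $B_N^{-1}=(P_NBP_N)^{-1}$ differs from $P_NB^{-1}P_N$, since $B^{-1}$ need not commute with $P_N$. What we do know is that both lie in $[b_1^{-1},b_0^{-1}]$, and by a Schur complement argument $B_N^{-1}\le P_NB^{-1}P_N$. I would avoid norm convergence and instead use the following plan.

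\begin{itemize}
\item[(a)] Use min--max with equivariance, applied isotypic component by isotypic component, to control negative eigenspaces.
\item[(b)] Note that $Q_N$ and $Q$ both have the form ``compact part from $A^{-1}$ plus a part $\ge b_1^{-1}$''. Negative directions can therefore only live where $A^{-1}$ is strongly negative, that is, in the span of eigenvectors of $A$ with $\lambda\in(-C,0)$ for a fixed $C$. This is a finite-dimensional invariant space $W\subset V_N$ for large $N$.
\item[(c)] On such low-frequency test spaces, use Theorem~\ref{thm:inertia} in the alternative form $[\Eminus{Q_N}]=[\Eminus{A_N}]-[\Eminus{A_N+B_N}]$. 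Then show that $\Eminus{A_N}$ and $\Eminus{A_N+B_N}$ differ by a stable amount, via the congruence $T_N=B_N^{-1/2}A_NB_N^{-1/2}$. Eigenvalues of $T_N$ in $(-1,0)$ correspond to $A$-eigenvalues of size $O(1)$, while those below $-1$ cancel.
\end{itemize}

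The main obstacle is making (c) rigorous uniformly in $N$. I would first try a cleaner route, which is a homotopy. Restricted to the fixed isotypic components that can carry negative directions, the path $Q_N^t=A_N^{-1}+(1-t)B_N^{-1}+tP_NB^{-1}|_{V_N}$ stays invertible for large $N$, because $P_N(A+B)^{-1}P_N\to(A+B)^{-1}$ and invertibility of $A+B$ gives a uniform gap. Then $\Eminus{Q_N}\iso\Eminus{P_NQ|_{V_N}}$. Finally, $P_NQP_N\to Q$ in the sense that the compressions eventually capture the finite-dimensional $\Eminus{Q}$ with a uniform spectral gap around $0$. This yields $\Eminus{P_NQ|_{V_N}}\iso\Eminus{Q}$ for large $N$, hence stabilization of the index and the equality $\mathfrak D_{\GG}(A+B;A)=\Dg(\Eminus{Q})$.
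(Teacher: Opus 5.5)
Your reduction to the Galerkin identity on $V_N$, and your argument that $Q$ is invertible with finite-dimensional $\Eminus{Q}$ (Fredholm index zero and $\Ker Q=B\Ker(A+B)=0$), are the same as the paper's. The gap is in the stabilization $\Eminus{Q_N}\iso\Eminus{Q}$, exactly where you flag the difficulty.

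Everything rests on the claim that $Q_N^t=A_N^{-1}+C_N^t$, with $C_N^t=(1-t)B_N^{-1}+tP_NB^{-1}|_{V_N}$, is invertible for all $t\in[0,1]$ once $N$ is large. The reason you give does not prove this. The convergence $P_N(A+B)^{-1}P_N\to(A+B)^{-1}$ holds automatically, because $(A+B)^{-1}$ is compact. It concerns compressions of an inverse, not inverses of Galerkin-type operators. Indeed $Q_N^t=A_N^{-1}\bigl(A_N+(C_N^t)^{-1}\bigr)C_N^t$, so what you need is invertibility of $A_N+(C_N^t)^{-1}$ on $V_N$, and compressions of $(A+B)^{-1}$ do not control this even at $t=0$. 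The $t=0$ case is precisely invertibility of $A_N+B_N$ for large $N$, which you need before Theorem~\ref{thm:finite-degree} can be applied; that hypothesis is therefore also left unproved.

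The Schur-complement inequality $B_N^{-1}\le P_NB^{-1}|_{V_N}$ only gives $\dim\Eminus{Q_N}\ge\dim\Eminus{P_NQ|_{V_N}}$, not equality. The sketch (a)--(c) does not close this either. In (b), min--max bounds the number of negative directions of $Q_N$ by $\dim W$, but those directions need not lie in $W$, since $B$ does not commute with $A$.

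The missing idea is a way to turn the merely strong convergence of the $B$-dependent part into norm convergence. You normalize the positive part to $\Id$ by an equivariant congruence and let the compact factor $A^{-1}$ absorb the strong convergence; this is the paper's argument. Set $S_N=(P_NBP_N)^{1/2}$ and $K_N=P_NA^{-1}P_N$. Then on $V_N$ one has $Q_N=B_N^{-1/2}(\Id_{V_N}+S_NK_NS_N)B_N^{-1/2}$. Moreover $S_N\to B^{1/2}$ strongly and $\|K_N-A^{-1}\|\to0$, so compactness of $A^{-1}$ gives $\|S_NK_NS_N-B^{1/2}A^{-1}B^{1/2}\|\to0$. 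Hence $\Id+S_NK_NS_N$ converges in norm to the invertible operator $\Id+B^{1/2}A^{-1}B^{1/2}=B^{1/2}QB^{1/2}$. In one step this gives invertibility of $Q_N$ (equivalently of $A_N+B_N$) for large $N$ and eventual constancy of the $\GG$-type of the negative spectral projections. No homotopy or compression step is needed.

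The same mechanism would repair your route. Since $(C_N^t)^{-1}P_N\to B$ strongly, uniformly in $t$, compactness gives $A^{-1}(C_N^t)^{-1}P_N\to A^{-1}B$ in norm, uniformly in $t$. So $\Id+A^{-1}(C_N^t)^{-1}P_N$ is eventually invertible for all $t$, and with it $Q_N^t$. Your last step, for the compressions $P_NQ|_{V_N}$, can indeed be done by Cauchy interlacing in each isotypic component. As written, however, the proposal does not supply the compactness argument on which the theorem depends.
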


\begin{proof}
Since $Q=B^{-1}+A^{-1}$ is a compact perturbation of the positive isomorphism $B^{-1}$,
\[
 Q\ \text{is Fredholm of index }0,\qquad
 \Ker Q\simeq_{\GG}\Ker(A+B)=\{0\}.
\]
Hence $Q$ is invertible. Set
\[
 V_N=P_N\HH,\quad A_N=A|_{V_N},\quad
 B_N=P_NB|_{V_N},\quad
 Q_N=A_N^{-1}+B_N^{-1}.
\]
For sufficiently large $N$, the finite-dimensional identity gives
\begin{equation}
\label{eq:finiteapprox}
 \graddeg(A_N,B(V_N))\starprod
 \graddeg(A_N+B_N,B(V_N))^{-1}
 =\Dg(\Eminus{Q_N}).
\end{equation}
Define, on $\HH$,
\[
 S_N=(P_NBP_N)^{1/2},\qquad
 K_N=P_NA^{-1}P_N,
\]
with $S_N=0$ on $V_N^\perp$. Then
\[
 \sup_N\|S_N\|\le\sqrt{b_1},\qquad
 S_N\rightarrow B^{1/2},\qquad
 \|K_N-A^{-1}\|\longrightarrow0.
\]
Compactness of $A^{-1}$ and strong convergence of $S_N$ imply
\[
 \left\|S_NK_NS_N-B^{1/2}A^{-1}B^{1/2}\right\|
 \longrightarrow0.
\]
By positive equivariant congruence,
\[
\begin{aligned}
 \Eminus{Q_N}
 &\simeq_{\GG}\Eminus{\Id_{V_N}+S_NK_NS_N},\\
 \Eminus Q
 &\simeq_{\GG}\Eminus{\Id+B^{1/2}A^{-1}B^{1/2}}.
\end{aligned}
\]
Extend $\Id_{V_N}+S_NK_NS_N$ by $\Id$ on $V_N^\perp$. The extensions converge in norm to the invertible limit; their negative spectral projections therefore have eventually constant $\GG$-type:
\[
 \Eminus{Q_N}\simeq_{\GG}\Eminus Q
 \quad(N\ge N_0).
\]
Substitution into \eqref{eq:finiteapprox} proves
\[
 \mathfrak D_{\GG}(A+B;A)=\Dg(\Eminus Q).
\]
\end{proof}

\begin{remark}\label{rem:scope}
Theorem~\ref{thm:infinite-linear} concerns linearizations. A nonlinear comparison requires a degree defined for the relevant dual gradients, together with suitable regularity and admissible homotopies.
\end{remark}

We next examine the effect of changing the reference operator while keeping the primal Hessian fixed.

\begin{theorem}
\label{thm:infinite-shift}
Under the hypotheses of
Theorem~\ref{thm:infinite-linear},
let $\alpha_1<\alpha_2$ satisfy
\[
A_i=A+\alpha_i\Id,\qquad
B_i=B-\alpha_i\Id>0,
\quad i=1,2,
\]
where both $A_i$ are invertible.

Let $E_A(J)$ denote the spectral projection
of $A$ associated with a Borel set
$J\subset\R$. Define
\[
Q_i=A_i^{-1}+B_i^{-1},
\qquad\mathscr R_{12}:=\operatorname{Ran}
E_A((-\alpha_2,-\alpha_1)).
\]
Then $\mathscr R_{12}$ is a finite-dimensional
$\GG$-representation, and
\begin{equation}
\label{eq:infinite-shift}
\Dg(\Eminus{Q_1})=\Dg(\mathscr R_{12})
\starprod\Dg(\Eminus{Q_2}).
\end{equation}
In particular, if $\mathscr R_{12}=\{0\}$,
then
\[
\Dg(\Eminus{Q_1})=\Dg(\Eminus{Q_2}).
\]
\end{theorem}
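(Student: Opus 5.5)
Since $A+B$ is invertible, the primal Hessian $P=A_i+B_i=A+B$ does not depend on $i$. The plan is to run the Galerkin identity \eqref{eq:finiteapprox} for both shifts at a common large level $N$ and cancel the common factor coming from $P$.

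First, finite dimensionality of $\mathscr R_{12}$. Because $A^{-1}$ is compact, $A$ has discrete spectrum with finite multiplicities accumulating only at $\pm\infty$. Hence only finitely many eigenvalues lie in the bounded interval $(-\alpha_2,-\alpha_1)$, and the spectral projection $E_A((-\alpha_2,-\alpha_1))$ has finite rank. Since $A$ commutes with $\GG$, so does every spectral projection of $A$, and therefore $\mathscr R_{12}$ is a finite-dimensional $\GG$-representation.

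Next, the shifted data satisfy the hypotheses of Theorem~\ref{thm:infinite-linear}:
\begin{itemize}
\item $A_i=A+\alpha_i\Id$ is self-adjoint, invertible and equivariant, with domain $\DD(A)$;
\item $A_i^{-1}=(A+\alpha_i\Id)^{-1}$ is compact by the resolvent identity, since $A$ has compact resolvent;
\item $B_i=B-\alpha_i\Id$ is bounded, equivariant, and positive definite;
\item $A_i+B_i=A+B$ is invertible.
\end{itemize}
So for each $i$ the theorem gives
\[
\mathfrak D_{\GG}(A+B;A_i)=\Dg(\Eminus{Q_i}),
\]
and $\Eminus{Q_i}$ is finite-dimensional.

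Now compare the two Galerkin limits. The spectral projections $P_N$ of $A$ are also spectral projections of $A_i$, with $(A_i)_N=A_N+\alpha_i\Id$. Also $(B_i)_N=B_N-\alpha_i\Id$, so $(A_i)_N+(B_i)_N=A_N+B_N$ for both $i$. Fix $N$ large enough that both limits have stabilized and $V_N\supset\mathscr R_{12}$. Then
\[
\Dg(\Eminus{Q_i})=\graddeg(A_N+\alpha_i\Id,B(V_N))\starprod\graddeg(A_N+B_N,B(V_N))^{-1}.
\]
The second factor is the same unit for $i=1,2$. Dividing the $i=1$ identity by the $i=2$ identity, as in the proof of Theorem~\ref{thm:shift}, gives
\[
\Dg(\Eminus{Q_1})\starprod\Dg(\Eminus{Q_2})^{-1}=\Dg(\Eminus{A_N+\alpha_1\Id})\starprod\Dg(\Eminus{A_N+\alpha_2\Id})^{-1}.
\]

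It remains to identify the right-hand side; this is the main, though elementary, step. Since $A_N$ is diagonal on $V_N$, the operator $A_N+\alpha\Id$ has negative spectral space spanned by the eigenspaces with $\lambda<-\alpha$. Because $\alpha_1<\alpha_2$,
\[
\Eminus{A_N+\alpha_1\Id}\oplus\mathscr R_{12}=\Eminus{A_N+\alpha_2\Id}.
\]
The endpoint $-\alpha_1$ is excluded, since $A_1$ is invertible and so $-\alpha_1$ is not an eigenvalue; likewise $-\alpha_2$ is not an eigenvalue because $A_2$ is invertible. The product property of $\Dg$ then gives
\[
\Dg(\Eminus{A_N+\alpha_1\Id})\starprod\Dg(\Eminus{A_N+\alpha_2\Id})^{-1}=\Dg(\mathscr R_{12})^{-1}.
\]
Substituting, we obtain $\Dg(\Eminus{Q_1})=\Dg(\mathscr R_{12})^{-1}\starprod\Dg(\Eminus{Q_2})$.

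The stated formula \eqref{eq:infinite-shift} has $\Dg(\mathscr R_{12})$ rather than its inverse. So the last step is to check that $\Dg(W)$ is an involution in $U(\GG)$, i.e. $\Dg(W)^{-1}=\Dg(W)$. This follows from the product property applied to $W\oplus W$ together with the normalization $\Dg(W\oplus W)=\mathbb I_{\GG}$, which I expect to be available from the cited references \cite{Geba1997,RuanRybicki2008}. If that identity cannot be cited, I would state the result with $\Dg(\mathscr R_{12})^{-1}$. This is where I expect the real obstacle: the two conventions agree only under the involution property, so it must be verified, not assumed. The special case $\mathscr R_{12}=\{0\}$ is immediate because $\Dg(\{0\})=\mathbb I_{\GG}$.
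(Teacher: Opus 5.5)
\textbf{The gap: the inclusion of negative spectral spaces is reversed.} Your overall route is the paper's: run the Galerkin identity for both shifted pairs at a common level $N$, cancel the common unit coming from $A_N+B_N$, and pass to the stabilized degrees. Your checks that $\mathscr R_{12}$ is a finite-dimensional $\GG$-representation and that $(A_i,B_i)$ satisfy the hypotheses of Theorem~\ref{thm:infinite-linear} are fine. The error is in the step you called elementary. Since $\alpha_1<\alpha_2$, the operator $A_N+\alpha_1\Id$ is the \emph{more} negative one: an eigenvalue $\mu$ of $A$ with $-\alpha_2<\mu<-\alpha_1$ satisfies $\mu+\alpha_1<0<\mu+\alpha_2$. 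Hence
\[
\Eminus{A_N+\alpha_1\Id}=\Eminus{A_N+\alpha_2\Id}\oplus\mathscr R_{12}.
\]
The product property then gives $\Dg(\Eminus{A_N+\alpha_1\Id})\starprod\Dg(\Eminus{A_N+\alpha_2\Id})^{-1}=\Dg(\mathscr R_{12})$. Substituting this yields \eqref{eq:infinite-shift} directly, which is exactly how the paper concludes. No inverse appears, and no further identity is needed.

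\textbf{Your proposed repair would not have worked, and your fallback statement is false.} $\Dg(W)$ is not an involution in $U(\GG)$ once $\GG$ has a circle factor acting nontrivially on $W$. For $\GG=S^1$ acting on $W=\mathbb C$ by multiplication, $\Dg(W)=(S^1)-(\Z_1)$ and $(\Z_1)\starprod(\Z_1)=0$. Therefore $\Dg(W\oplus W)=(S^1)-2(\Z_1)\neq\mathbb I_{\GG}$, and $\Dg(W)^{-1}=(S^1)+(\Z_1)$. The normalization $\Dg(W\oplus W)=\mathbb I_{\GG}$ holds only where the degree is detected by the Brouwer marks $(-1)^{\dim W^H}$, i.e.\ for finite groups or stationary modes. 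That is why the paper uses it only for $W_{0,0}$ and $W_{1,0}$. Since the correct identity holds, your version with $\Dg(\mathscr R_{12})^{-1}$ would force $\Dg(\mathscr R_{12})\starprod\Dg(\mathscr R_{12})=\mathbb I_{\GG}$. This fails, for example, whenever $\mathscr R_{12}\cong\mathbb C$ with $S^1$ acting by multiplication. It also fails in the ring application when $\mathscr R_{12}$ is a single nonstationary mode such as $W_{2,1}^{+}$, where $\Dg(W_{2,1}^{+})=\mathbb I_{\GG}-Y$ with $Y\neq0$ and $Y\starprod Y=0$. Your sign error and the involution claim cancel only in the finite-group case.
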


\begin{proof}
Let $P_N$ be the spectral projections of $A$
and set
\[
V_N=P_N\HH,\qquad
A_{i,N}=A_i|_{V_N},\qquad
B_{i,N}=P_NB_i|_{V_N},
\]
\[
Q_{i,N}=A_{i,N}^{-1}+B_{i,N}^{-1}.
\]
Since $A$ has compact resolvent,
$\mathscr R_{12}$ is finite-dimensional
and is contained in $V_N$ for all
sufficiently large $N$.

By Theorem~\ref{thm:finite-degree},
\[
\Dg(\Eminus{Q_{i,N}})=\Dg(\Eminus{A_{i,N}})
\starprod
\Dg(\Eminus{P_NP|_{V_N}})^{-1},
\]
where $P=A+B$ is independent of $i$.

Since $\alpha_1<\alpha_2$, the negative
spectral representations satisfy
\[
\Eminus{A_{1,N}}=\Eminus{A_{2,N}}
\oplus\mathscr R_{12}
\]
for all sufficiently large $N$.
Consequently,
\[
\Dg(\Eminus{A_{1,N}})=\Dg(\Eminus{A_{2,N}})
\starprod\Dg(\mathscr R_{12}).
\]
Cancelling the common primal factor gives
\[
\Dg(\Eminus{Q_{1,N}})
=
\Dg(\mathscr R_{12})
\starprod
\Dg(\Eminus{Q_{2,N}}).
\]
Passing to the stabilized degrees by
Theorem~\ref{thm:infinite-linear}
proves \eqref{eq:infinite-shift}.
\end{proof}

We give a nonlinear counterpart of the primal--dual
correspondence using the ordinary equivariant compact-map degree. Unlike the preceding gradient-degree comparison, this result does not require nondegenerate solutions or differentiability of the nonlinear operator.

\begin{theorem}
\label{thm:compact-map}
Let $\GG$ be a finite group, $\HH$ a real orthogonal $\GG$-Hilbert space, and $K:\HH\to\HH$ a compact linear equivariant operator. Suppose that $N:\HH\to\HH$ is an equivariant homeomorphism mapping bounded sets into bounded sets.

If $\mathcal O\subset\HH$ is bounded, invariant, and open, with
\[
z\ne KN(z),\qquad z\in\partial\mathcal O,
\]
then
\begin{equation}
\label{eq:compact-degree-equality}
\deg_{\GG}^{\mathrm{cpt}}
(\Id-KN,\mathcal O)
=\deg_{\GG}^{\mathrm{cpt}}
(\Id-NK,N(\mathcal O)).
\end{equation}
Here $\deg_{\GG}^{\mathrm{cpt}}$ denotes the Burnside-ring-valued equivariant compact-map degree. The equality holds without any nondegeneracy assumption on the zero set.
\end{theorem}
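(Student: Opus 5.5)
The plan is to derive \eqref{eq:compact-degree-equality} from the commutativity property of the equivariant fixed-point index, applied to the compact maps $KN$ and $NK$. Write $f=KN$ and $g=NK$. Both are compact: $N$ maps bounded sets to bounded sets and $K$ is compact, so $KN$ maps bounded sets to relatively compact sets. For $NK$, the map $K$ sends bounded sets to relatively compact sets, and the continuous map $N$ sends these to relatively compact sets. Both maps are equivariant.

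The first step is to relate the fixed points. If $z=KN(z)$, put $w=N(z)$; then $NK(w)=N(KN(z))=N(z)=w$. Conversely, if $w=NK(w)$, put $z=K(w)$; then $KN(z)=K(NK(w))=Kw=z$, and $N(z)=NK(w)=w$. So $N$ maps $\operatorname{Fix}(f)\cap\mathcal O$ bijectively onto $\operatorname{Fix}(g)\cap N(\mathcal O)$. Since $N$ is a homeomorphism, $N(\mathcal O)$ is open and invariant, and $\partial N(\mathcal O)=N(\partial\mathcal O)$. Hence $g$ has no fixed points on $\partial N(\mathcal O)$. The set $N(\mathcal O)$ is bounded because $N$ preserves bounded sets. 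Both degrees are therefore defined.

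The second step applies commutativity. In the equivariant setting for finite $\GG$, the Burnside-ring-valued degree is determined by the fixed-point indices of the restrictions to the fixed subspaces $\HH^H$, for subgroups $H\le\GG$, through the usual Möbius-type inversion of the marks $\deg(\Id-f^H,\mathcal O^H)$. This reduces the problem to showing, for every $H$,
\[
\operatorname{ind}(f^H,\mathcal O^H)=\operatorname{ind}(g^H,N(\mathcal O)^H).
\]
Equivariance gives $N(\mathcal O)^H=N(\mathcal O^H)$, and $K$ and $N$ preserve $\HH^H$. Restricted to $\HH^H$, with $U=\mathcal O^H$ and $V=N(\mathcal O^H)$, we have $N\colon U\to V$ and $K\colon V\to K(V)$. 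The commutativity property of the Leray–Schauder/Granas index \cite{GranasDugundji2003} states that for continuous maps $\phi\colon U\to E$ and $\psi\colon V\to E'$ with $\psi\phi$ and $\phi\psi$ compact and fixed-point sets compact, one has $\operatorname{ind}(\psi\phi,U\cap\phi^{-1}V)=\operatorname{ind}(\phi\psi,V\cap\psi^{-1}U)$.

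The main obstacle is matching the domains in this formula. On the $f$ side, $\phi=N|_U$, and $U\cap N^{-1}(V)=U$, so that side is fine. On the $g$ side, the formula produces $V\cap K^{-1}(U)$, which is smaller than $V$. I would handle this with excision. Every fixed point $w=NK(w)$ in $V$ satisfies $Kw=z\in U$, so it lies in $V\cap K^{-1}(U)$, which is open. Excision then gives $\operatorname{ind}(NK,V\cap K^{-1}U)=\operatorname{ind}(NK,V)$.

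Compactness of the fixed-point sets also needs a check. $\operatorname{Fix}(f)\cap\overline U$ is compact because $f$ is compact and has no fixed points on the boundary. $\operatorname{Fix}(g)\cap V$ is its homeomorphic image under $N$, so it is compact as well.

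Combining the marks over all $H\le\GG$ then yields \eqref{eq:compact-degree-equality}. No step uses differentiability or nondegeneracy.
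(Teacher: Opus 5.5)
Your proposal is correct and follows the same route as the paper. Both arguments restrict to the fixed-point subspaces $\HH^H$, apply commutativity of the Leray--Schauder index to $KN$ and $NK$ using $N(\mathcal O)^H=N(\mathcal O^H)$, and conclude by injectivity of the Burnside mark homomorphism. Your excision step, which identifies $\operatorname{ind}(NK,V\cap K^{-1}(U))$ with $\operatorname{ind}(NK,V)$, and your compactness check on the fixed-point sets supply a domain-matching detail that the paper's proof passes over.
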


\begin{proof}
Since $K$ is compact and $N$ maps bounded sets into bounded sets, both $KN$ and $NK$ are compact maps on bounded subsets of $\HH$.

The correspondence
\[
u=N(z),\qquad z=K(u)
\]
gives
\[
z=KN(z)
\quad\Longleftrightarrow\quad
u=NK(u).
\]
Since $N$ is an equivariant homeomorphism, $N(\mathcal O)$ is bounded, invariant, and open, and the dual equation has no zeros on
$\partial N(\mathcal O)$.

For every subgroup $L\le\GG$, the restrictions
$K^L$ and $N^L$ act on $\HH^L$, and
\[
N(\mathcal O)^L=N^L(\mathcal O^L).
\]
By commutativity of the Leray--Schauder fixed-point index,
\[
\begin{aligned}
&\deg_{LS}
(\Id-K^LN^L,\mathcal O^L,0)\\
&\qquad=
\deg_{LS}
(\Id-N^LK^L,N^L(\mathcal O^L),0).
\end{aligned}
\]
These are the fixed-point marks of the corresponding equivariant compact-map degrees. The injectivity of the Burnside mark
homomorphism yields \eqref{eq:compact-degree-equality}.
\end{proof}

\begin{corollary}
\label{cor:compact-application}
Suppose that $\GG$ is finite, $A^{-1}$ is compact and equivariant, and
$N=\nabla\mathcal H$ is an equivariant homeomorphism of $\HH$ mapping bounded sets into bounded sets.

Set
\[
K=-A^{-1}.
\]
Then the primal and Clarke dual equations admit the compact-map representations
\begin{equation}
\label{eq:compact-pair}
\begin{aligned}
Az+N(z)=0
&\quad\Longleftrightarrow\quad
z-KN(z)=0,\\
A^{-1}u+N^{-1}(u)=0
&\quad\Longleftrightarrow\quad
u-NK(u)=0.
\end{aligned}
\end{equation}
Consequently, for every admissible bounded invariant open set $\mathcal O$,
\[
\deg_{\GG}^{\mathrm{cpt}}
(\Id-KN,\mathcal O)
=\deg_{\GG}^{\mathrm{cpt}}(\Id-NK,N(\mathcal O)).
\]
In particular, this comparison remains valid for degenerate solutions without requiring a Fr\'echet derivative of the superposition operator $N$.
\end{corollary}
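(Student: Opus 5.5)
The corollary reduces to Theorem~\ref{thm:compact-map} applied with $K=-A^{-1}$ and $N=\nabla\mathcal H$. The plan is therefore to verify the two compact-map representations in \eqref{eq:compact-pair}, then check the hypotheses of Theorem~\ref{thm:compact-map}, and finally read off the degree equality.

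\emph{The equivalences in \eqref{eq:compact-pair}.} For the primal equation, take $z\in\DD(A)$ and apply the injective operator $A^{-1}$:
\[
Az+N(z)=0 \iff z+A^{-1}N(z)=0 \iff z-KN(z)=0.
\]
For the converse, if $z=KN(z)=-A^{-1}N(z)$, then $z$ lies in $\operatorname{Ran}A^{-1}=\DD(A)$. Applying $A$ then recovers $Az+N(z)=0$. So every fixed point of $KN$ in $\HH$ is automatically a primal solution. For the dual equation, apply the homeomorphism $N$:
\[
A^{-1}u+N^{-1}(u)=0 \iff N^{-1}(u)=K(u) \iff u=NK(u).
\]
This uses $N^{-1}=\nabla\mathcal H^*$ from Lemma~\ref{lem:dual-gradient}. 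The same fact is already visible in Theorem~\ref{thm:correspondence}.

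\emph{Hypotheses of Theorem~\ref{thm:compact-map}.} The operator $K=-A^{-1}$ is compact, linear and equivariant because $A^{-1}$ is. The map $N$ is an equivariant homeomorphism sending bounded sets to bounded sets by assumption. Admissibility of $\mathcal O$ means $z\ne KN(z)$ on $\partial\mathcal O$. Theorem~\ref{thm:compact-map} then yields the stated equality of Burnside-ring degrees. Its proof also gives admissibility of $N(\mathcal O)$ for the dual map $\Id-NK$: $N$ maps boundary to boundary and fixed points of $KN$ bijectively onto fixed points of $NK$.

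\emph{The final sentence.} Neither the statement nor the proof of Theorem~\ref{thm:compact-map} uses differentiability of $N$ or nondegeneracy of zeros. It relies only on commutativity of the Leray--Schauder index on each fixed-point subspace $\HH^L$, together with injectivity of the mark homomorphism. The remark about degenerate solutions is therefore immediate.

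\emph{Main subtlety.} The only step needing care is the domain issue in the primal equivalence: the equation $Az+N(z)=0$ is posed on $\DD(A)$, whereas $\Id-KN$ acts on all of $\HH$. This is settled by noting $\operatorname{Ran}A^{-1}=\DD(A)$, so no solutions are gained or lost by passing to the fixed-point formulation.
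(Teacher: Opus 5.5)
Your proposal is correct and follows the paper's route: the paper's proof likewise obtains \eqref{eq:compact-pair} from $K=-A^{-1}$ and the invertibility of $N$, and then applies Theorem~\ref{thm:compact-map} directly. Your treatment of the domain issue via $\operatorname{Ran}A^{-1}=\DD(A)$ and of the admissibility of $N(\mathcal O)$ only fills in details the paper leaves implicit; note also that the dual equivalence needs only the bijectivity of $N$, so the appeal to Lemma~\ref{lem:dual-gradient} is not actually required.
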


\begin{proof}
The equivalences in \eqref{eq:compact-pair} follow from $K=-A^{-1}$ and the invertibility of $N$. The conclusion follows directly
from Theorem~\ref{thm:compact-map}.
\end{proof}

\begin{remark}
The comparison above concerns the ordinary equivariant compact-map degree and follows from the classical commutativity property
of the fixed-point index. It does not imply an equality of the Euler-ring gradient degrees associated with the original strongly
indefinite action and its Clarke dual.
\end{remark}

\section{Equivariant bifurcation and primal--dual degree correspondence}
\label{sec:bif}
\subsection{Local bifurcation and primal--dual degree correspondence}
\label{subsec:local-bifurcation}

Consider a parameter interval $I\subset\R$ and a family
\[
F(\lambda,z)=A_\lambda z+\nabla\mathcal H_\lambda(z),
\]
where $A_\lambda$ is a continuous family of self-adjoint, $\GG$-equivariant invertible compact-inverse operators, with fixed domain or a common self-adjoint realization. Assume the invariant convex potentials $\mathcal H_\lambda$ have locally uniform coercivity and boundedness, and that $B_\lambda=D(\nabla\mathcal H_\lambda)(0)>0$ varies continuously in operator norm. Set
\[
P_\lambda=A_\lambda+B_\lambda,\qquad
 Q_\lambda=A_\lambda^{-1}+B_\lambda^{-1}.
\]
Suppose that the Hilbert-space hypotheses of Theorem~\ref{thm:infinite-linear} hold for each $\lambda\in I$.

\begin{definition}
At every noncritical $\lambda$ define
\[
\mathfrak I_{\rm pr}(\lambda):=\mathfrak D_{\GG}(P_\lambda;A_\lambda),\qquad\mathfrak I_{\rm du}(\lambda)
 :=\Dg(\Eminus{Q_\lambda}).
\]
\end{definition}

\begin{theorem}\label{thm:endpoint}
If $\lambda\in I$ is noncritical, then
\[
\mathfrak I_{\rm pr}(\lambda)=\mathfrak I_{\rm du}(\lambda).
\]
Let $\lambda_-<\lambda_*<\lambda_+$, where $\lambda_*$ is the only singular value of $P_\lambda$ in $[\lambda_-,\lambda_+]$. The normalized primal and dual \emph{linear} degree jumps coincide:
\[
\mathfrak I_{\rm pr}(\lambda_+)-\mathfrak I_{\rm pr}(\lambda_-)
 =\mathfrak I_{\rm du}(\lambda_+)-\mathfrak I_{\rm du}(\lambda_-).
\]
\end{theorem}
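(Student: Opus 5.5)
The plan is to reduce both assertions to Theorem~\ref{thm:infinite-linear}, applied pointwise in $\lambda$. At a noncritical $\lambda$ the operator $P_\lambda=A_\lambda+B_\lambda$ is invertible by definition. By the standing hypotheses, $A_\lambda$ is self-adjoint, invertible, $\GG$-equivariant with compact inverse, and $B_\lambda$ is a bounded equivariant operator with $b_0\Id\le B_\lambda\le b_1\Id$ for locally uniform constants. Theorem~\ref{thm:infinite-linear} therefore shows that the Galerkin limit defining $\mathfrak D_{\GG}(P_\lambda;A_\lambda)$ stabilizes, that $Q_\lambda$ is invertible with finite-dimensional negative spectral representation, and that
\[
\mathfrak I_{\rm pr}(\lambda)=\mathfrak D_{\GG}(P_\lambda;A_\lambda)=\Dg(\Eminus{Q_\lambda})=\mathfrak I_{\rm du}(\lambda).
\]
This proves the first identity.

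For the jump formula, I first check that $\lambda_\pm$ are noncritical. By Lemma~\ref{lem:kernel}, or the remark preceding Corollary~\ref{cor:spectralflow}, $B_\lambda$ maps $\Ker P_\lambda$ equivariantly and isomorphically onto $\Ker Q_\lambda$, so singular values of $P_\lambda$ and of $Q_\lambda$ coincide. Since $\lambda_*$ is the only singular value in $[\lambda_-,\lambda_+]$, both $P_{\lambda_\pm}$ and $Q_{\lambda_\pm}$ are invertible. Applying the first part at $\lambda_+$ and at $\lambda_-$ and subtracting the two equalities in the additive group of $U(\GG)$ gives
\[
\mathfrak I_{\rm pr}(\lambda_+)-\mathfrak I_{\rm pr}(\lambda_-)=\mathfrak I_{\rm du}(\lambda_+)-\mathfrak I_{\rm du}(\lambda_-).
\]
The jump identity thus follows formally from the endpoint identity, and no homotopy argument across $\lambda_*$ is needed.

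The main obstacle is confirming that the hypotheses of Theorem~\ref{thm:infinite-linear} really hold at each fixed $\lambda$. In particular, the Galerkin projections $P_N$ must be spectral projections of $A_\lambda$ itself. Since these projections depend on $\lambda$, I would construct them separately at $\lambda_+$ and at $\lambda_-$. This is legitimate because Definition~\ref{def:relative} is pointwise and Theorem~\ref{thm:infinite-linear} shows the stabilized value equals $\Dg(\Eminus{Q_\lambda})$, which does not depend on the chosen approximation scheme. The uniform bounds on $B_\lambda$ come from the assumed continuity in operator norm and the locally uniform coercivity of $\mathcal H_\lambda$, which give $b_0\Id\le B_\lambda\le b_1\Id$ at each $\lambda$ as required.
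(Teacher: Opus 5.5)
Your proposal is correct and follows the paper's proof: you apply Theorem~\ref{thm:infinite-linear} at each noncritical endpoint to get $\mathfrak I_{\rm pr}(\lambda_\pm)=\Dg(\Eminus{Q_{\lambda_\pm}})=\mathfrak I_{\rm du}(\lambda_\pm)$, then subtract in $U(\GG)$. Your extra checks on the noncriticality of $\lambda_\pm$ via the kernel correspondence and on the $\lambda$-dependence of the Galerkin projections are harmless refinements that the paper leaves implicit.
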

\begin{proof}
For $\lambda\in\{\lambda_-,\lambda_+\}$, Theorem~\ref{thm:infinite-linear} yields
\[
 \mathfrak I_{\rm pr}(\lambda)
 =\mathfrak D_{\GG}(P_\lambda;A_\lambda)
 =\Dg(\Eminus{Q_\lambda})
 =\mathfrak I_{\rm du}(\lambda).
\]
Subtracting the two endpoint equalities in $U(\GG)$,
\[
 \mathfrak I_{\rm pr}(\lambda_+)-\mathfrak I_{\rm pr}(\lambda_-)
 =\mathfrak I_{\rm du}(\lambda_+)-\mathfrak I_{\rm du}(\lambda_-).
\]
\end{proof}

For nonlinear bifurcation, we additionally assume that the linear endpoint indices agree with the local degrees of the nonlinear maps.

\begin{assumption}\label{ass:nonlinear}
For each noncritical endpoint $\lambda_\pm$, the dual functional $\Psi_\lambda$ and its gradient admit an equivariant gradient degree on some fixed sufficiently small invariant ball $B_r(0)$; the origin is its only critical point there; and the local linearization principle holds:
\begin{equation}
\label{eq:linearization-principle}
 \graddeg(\nabla\Psi_{\lambda_\pm},B_r(0))
 =\Dg(\Eminus{Q_{\lambda_\pm}}).
\end{equation}
Assume also that the dual gradients depend jointly continuously on the
parameter and state variables, and that the equivariant gradient degree
satisfies homotopy invariance for admissible parameter homotopies.
\end{assumption}

The following result is an equivariant gradient-degree analogue of the classical local bifurcation principle of Krasnosel'skii \cite{Krasnoselskii}.

\begin{theorem}[Local equivariant dual bifurcation]\label{thm:local}
Assume Theorem~\ref{thm:endpoint} and Assumption~\ref{ass:nonlinear}. If
\begin{equation}
\label{eq:nonzerojump}
 \omega_{\GG}(\lambda_*)
 :=\mathfrak I_{\rm du}(\lambda_+)
 -\mathfrak I_{\rm du}(\lambda_-)\neq0,
\end{equation}
then $(\lambda_*,0)$ is a bifurcation point of nontrivial dual critical points. Through \eqref{eq:correspondence}, it is also a bifurcation point for the original equation, with the same isotropy groups at corresponding solutions.
\end{theorem}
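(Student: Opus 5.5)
We argue by contradiction, following the classical Krasnosel'skii scheme adapted to the gradient degree. Suppose $(\lambda_*,0)$ is not a bifurcation point of nontrivial dual critical points. Then there exist $\varepsilon>0$ and $0<\rho\le r$ such that, for every $\lambda\in[\lambda_*-\varepsilon,\lambda_*+\varepsilon]$, the origin is the only zero of $\nabla\Psi_\lambda$ in $\overline{B_\rho(0)}$. Shrinking $[\lambda_-,\lambda_+]$ if needed, we take $\lambda_\pm$ noncritical with $|\lambda_\pm-\lambda_*|<\varepsilon$. This requires the endpoint data of Assumption~\ref{ass:nonlinear} to be available at these new endpoints. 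Alternatively, we may keep the original $\lambda_\pm$ and use that on $[\lambda_-,\lambda_*)\cup(\lambda_*,\lambda_+]$ the linearizations $Q_\lambda$ are invertible. Then the nonlinear degree is locally constant there, by homotopy invariance together with the linearization principle \eqref{eq:linearization-principle} near each noncritical value.

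Next, we observe that $\nabla\Psi_\lambda(u)\neq0$ for $u\in\partial B_\rho(0)$ and all $\lambda\in[\lambda_-,\lambda_+]$. The family $(\lambda,u)\mapsto\nabla\Psi_\lambda(u)$ is therefore an admissible equivariant gradient homotopy on $B_\rho(0)$, using the joint continuity assumed in Assumption~\ref{ass:nonlinear}. Homotopy invariance then gives
\[
\graddeg(\nabla\Psi_{\lambda_-},B_\rho(0))=\graddeg(\nabla\Psi_{\lambda_+},B_\rho(0)).
\]
By excision, since the origin is the only zero in $B_r(0)$ at the endpoints, these degrees equal the degrees on $B_r(0)$. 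By \eqref{eq:linearization-principle}, they equal $\Dg(\Eminus{Q_{\lambda_-}})=\mathfrak I_{\rm du}(\lambda_-)$ and $\mathfrak I_{\rm du}(\lambda_+)$ respectively. Hence $\omega_{\GG}(\lambda_*)=0$, contradicting \eqref{eq:nonzerojump}. Consequently there are sequences $\lambda_n\to\lambda_*$ and $u_n\neq0$ with $u_n\to0$ and $\nabla\Psi_{\lambda_n}(u_n)=F^*_{\lambda_n}(u_n)=0$.

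For the primal statement, we apply Theorem~\ref{thm:correspondence} at each $\lambda_n$ and set $z_n=\nabla\mathcal H^*_{\lambda_n}(u_n)=-A_{\lambda_n}^{-1}u_n$. Then $F(\lambda_n,z_n)=0$, $z_n\neq0$ because $\nabla\mathcal H^*_{\lambda_n}$ is a homeomorphism fixing $0$, and $\Iso_{\GG}(z_n)=\Iso_{\GG}(u_n)$. It remains to show $z_n\to0$. This follows from $\|z_n\|\le\|A_{\lambda_n}^{-1}\|\,\|u_n\|$, provided $\|A_\lambda^{-1}\|$ is locally bounded. That bound holds because $A_\lambda$ is a continuous family of invertible operators with common realization; if only norm-continuity of $A_\lambda^{-1}$ is available, this is exactly what is used.

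The main obstacle is the first step: passing from ``not a bifurcation point'' to a uniform isolating radius $\rho$ valid on a whole parameter interval, together with the compatibility of $\rho$ with the fixed ball $B_r(0)$ of Assumption~\ref{ass:nonlinear}. We resolve it by the compactness of $[\lambda_-,\lambda_+]$ and excision. Either the endpoints are moved close to $\lambda_*$ as described, or the zero-free property on $\partial B_\rho$ is derived from the definition of non-bifurcation with $\lambda_\pm$ chosen within $\varepsilon$ of $\lambda_*$. In the latter case Theorem~\ref{thm:endpoint} and the linearization principle are applied at these endpoints, the only singular value between them being $\lambda_*$.
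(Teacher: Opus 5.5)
Your proof is correct and follows essentially the same route as the paper. You argue by contradiction, use the absence of nontrivial zeros near $(\lambda_*,0)$ to get an admissible parameter homotopy on a small invariant ball, and combine homotopy invariance with the linearization principle \eqref{eq:linearization-principle} to force $\omega_{\GG}(\lambda_*)=0$. The dual zeros are then transferred through \eqref{eq:correspondence} with $\Iso_{\GG}(z_n)=\Iso_{\GG}(u_n)$.

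Your version is somewhat more explicit than the paper's, which simply picks the ball and interval ``by the assumed admissibility'' and cites ``the assumed continuity'' for the transfer. You flag that the endpoint hypotheses are needed at endpoints close to $\lambda_*$, use excision to pass from $B_\rho$ to $B_r$, and get $z_n\to0$ from $z_n=-A_{\lambda_n}^{-1}u_n$. When you move the endpoints, you should also state that $\Dg(\Eminus{Q_\lambda})$ is constant on each side of $\lambda_*$, because $Q_\lambda$ is a norm-continuous invertible family there; this is why the jump is unchanged.
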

\begin{proof}
Suppose no nontrivial zeros approach $(\lambda_*,0)$. By the assumed admissibility, choose a small invariant ball $B_r$ and interval $[\lambda_-,\lambda_+]$ with no zeros on the lateral boundary. Homotopy invariance and \eqref{eq:linearization-principle} give
\[
\begin{aligned}
 \mathfrak I_{\rm du}(\lambda_-)
 &=\graddeg(\nabla\Psi_{\lambda_-},B_r)\\
 &=\graddeg(\nabla\Psi_{\lambda_+},B_r)
 =\mathfrak I_{\rm du}(\lambda_+).
\end{aligned}
\]
Hence
\[
 \omega_{\GG}(\lambda_*)=\mathfrak I_{\rm du}(\lambda_+)
  -\mathfrak I_{\rm du}(\lambda_-)=0,
\]
contrary to \eqref{eq:nonzerojump}. For corresponding nontrivial zeros,
\[
 u_j=\nabla\mathcal H_{\lambda_j}(z_j),\qquad
 z_j=\nabla\mathcal H_{\lambda_j}^*(u_j),\qquad
 \Iso_{\GG}(u_j)=\Iso_{\GG}(z_j).
\]
The assumed continuity transfers their convergence to the primal equation.
\end{proof}

\begin{corollary}
\label{cor:isotropy}
Under Theorem~\ref{thm:local}, suppose the coefficient of an orbit type $(H)$ in the degree jump provides a noncancellable maximal-orbit-type contribution under the chosen equivariant degree theory. Then arbitrarily near $(\lambda_*,0)$ there exist nontrivial primal solutions whose isotropy contains a subgroup conjugate to $H$. In the presence of several appropriately distinct maximal contributions, they may give distinct symmetry classes of solution orbits.
\end{corollary}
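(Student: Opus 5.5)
Starting from the degree jump $\omega_{\GG}(\lambda_*)=\mathfrak I_{\rm du}(\lambda_+)-\mathfrak I_{\rm du}(\lambda_-)$ produced by Theorem~\ref{thm:local}, we argue in the fixed-point space $\HH^H$ with a localized, isotypic version of that theorem's contradiction argument, and then move the solutions to the primal side. Suppose, for contradiction, that there are $\varepsilon>0$ and a neighborhood $[\lambda_-,\lambda_+]\times B_r(0)$ of $(\lambda_*,0)$ in which every nontrivial dual zero $u$ of $\nabla\Psi_\lambda$ has $\Iso_{\GG}(u)$ not containing any conjugate of $H$. Equivalently, after conjugating, no nontrivial zero lies in $\HH^{H'}$ for any $H'$ conjugate to $H$. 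By Theorem~\ref{thm:fixed-dual} the Legendre map $\nabla\mathcal H_\lambda$ restricts to homeomorphisms of $\HH^{H'}$ and matches primal and dual zeros there. So it is equivalent to work only with the dual gradients $\nabla\Psi_\lambda$.

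The key step uses the structure of the Euler ring. Let $\pi_{(H)}:U(\GG)\to\Z$ be the coefficient of the generator $(H)$. We use the following standard fact (recursive formula or reduction to $W(H)$-degree, \cite{Geba1997,RuanRybicki2008}). Suppose $(H)$ is maximal among the orbit types carrying the nonzero jump, which is how we read ``noncancellable maximal-orbit-type contribution''. Then $\pi_{(H)}\graddeg(\nabla\Psi,\Omega)$ is determined by the $W(H)$-equivariant gradient degree of the restriction $\nabla\Psi^H:\Omega^H\to\HH^H$. Indeed, $\nabla\Psi^H=\nabla(\Psi|_{\HH^H})$ by invariance, so it is again a gradient. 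More precisely, $\pi_{(H)}$ depends only on the germ of $\nabla\Psi$ near its zeros in $\Omega^H$. Let $\Omega=B_r(0)$. Under our contradicting hypothesis, the only zero of $\nabla\Psi_\lambda^H$ in $\Omega^H$ for $\lambda\in[\lambda_-,\lambda_+]$ is $0$. Hence the parameter family $\nabla\Psi^H_\lambda$ is an admissible $W(H)$-gradient homotopy on $\Omega^H$, with no zeros on $\partial\Omega^H$. Homotopy invariance (Assumption~\ref{ass:nonlinear}) then gives $\pi_{(H)}\graddeg(\nabla\Psi_{\lambda_-},B_r)=\pi_{(H)}\graddeg(\nabla\Psi_{\lambda_+},B_r)$.

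Combining this with the linearization principle \eqref{eq:linearization-principle}, we get $\pi_{(H)}\omega_{\GG}(\lambda_*)=0$, which contradicts the assumed nonzero coefficient of $(H)$. Hence there are $\lambda_j\to\lambda_*$ and nontrivial $u_j\to0$ with $\nabla\Psi_{\lambda_j}(u_j)=0$ and $\Iso_{\GG}(u_j)\supseteq gHg^{-1}$. Then $z_j=\nabla\mathcal H^*_{\lambda_j}(u_j)$ is nontrivial, because $\nabla\mathcal H^*_\lambda(0)=0$ and the map is injective. By the assumed joint continuity, $z_j\to0$. By Theorem~\ref{thm:correspondence}, $\Iso_{\GG}(z_j)=\Iso_{\GG}(u_j)$. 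For several distinct maximal $(H_1),\dots,(H_k)$, we apply the same argument to each $\pi_{(H_i)}$. This gives solutions with isotropy containing $H_i$. They lie in different orbit types as long as no $H_i$ is subconjugate to another, which explains the hedged ``may give distinct'' wording.

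The main obstacle is the key step: justifying that $\pi_{(H)}$ of the gradient degree is a homotopy invariant of the restriction to $\HH^H$ alone. This requires that no contributions from larger orbit types $(K)>(H)$ feed into the $(H)$-coefficient, and this is exactly what the maximality hypothesis is there to control. In the recursive formula, $\pi_{(H)}$ equals a $W(H)$-orbit count minus terms coming from the $(K)$-coefficients with $K>H$. We would first try to show that those terms are unchanged between $\lambda_\pm$, because the corresponding fixed spaces $\HH^K\subset\HH^H$ also carry no nontrivial zeros. If that fails, we would simply adopt the corollary's phrase ``under the chosen equivariant degree theory'' as the hypothesis that supplies this invariance.
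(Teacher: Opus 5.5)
Your proposal is correct only through the fallback in your last paragraph, and that fallback is exactly the paper's proof. The paper reads the hypothesis on $(H)$ as a black-box detection property: a nonzero $(H)$-coefficient of $\omega_{\GG}(\lambda_*)$ yields nontrivial dual zeros $u$ near $(\lambda_*,0)$ with $H$ subconjugate to $\Iso_{\GG}(u)$. It then transfers $u$ to $z=\nabla\mathcal H^*(u)$ using $\Iso_{\GG}(z)=\Iso_{\GG}(u)$, as in your third paragraph.

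Your main route restricts to $\HH^H$ and reruns the contradiction argument of Theorem~\ref{thm:local} there. This is the standard mechanism behind the detection property, and it would turn that hypothesis into a theorem. But it needs two ingredients the paper never assumes. First, a $W(H)$-equivariant gradient degree for $\nabla\Psi_\lambda|_{\HH^H}$ that is homotopy invariant under homotopies admissible only on $\HH^H$. Second, the identity that recovers the $(H)$-coefficient of the $\GG$-degree from that restricted degree. Assumption~\ref{ass:nonlinear} cannot supply these, despite your citation. Its degree and homotopy invariance concern $\GG$-homotopies on $B_r(0)\subset\HH$. The full family $\nabla\Psi_\lambda$, $\lambda\in[\lambda_-,\lambda_+]$, is never admissible on such a ball, since otherwise homotopy invariance and \eqref{eq:linearization-principle} would force $\omega_{\GG}(\lambda_*)=0$. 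So the paper's proof is short because it builds detection into the hypothesis. Yours pinpoints which additional property of the degree theory that hypothesis encodes, but as written the key step rests on the fallback.

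Two smaller remarks. Once the restriction property is granted, maximality is not needed for existence. The downward induction over $(K)\ge(H)$ that you sketch already works, because zeros in $\HH^K\subset\HH^H$ are excluded by the same contradiction hypothesis. Maximality matters rather for pinning the isotropy down exactly. Also, for the last sentence of the statement, non-subconjugacy of the $H_i$ does not by itself give distinct orbits. A single nontrivial solution may have isotropy containing conjugates of two different $H_i$. You need the detected solutions to have isotropy exactly $(H_i)$, for instance because each $(H_i)$ is maximal among the isotropy types of nonzero points of $\HH$. The paper leaves this as its unspecified nonoverlap hypothesis.
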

\begin{proof}
By the assumed maximal-orbit-type detection property,
\[
 [\omega_{\GG}(\lambda_*)]_{(H)}\ne0
 \ \Longrightarrow\ \exists\ u\ne0:\quad
 H\ \text{is conjugate to a subgroup of }\Iso_{\GG}(u).
\]
For $z=\nabla\mathcal H^*(u)$,
\[
 \Iso_{\GG}(z)=\Iso_{\GG}(u).
\]
The same inclusion holds for the corresponding primal solution. Distinct classes follow only under the stated nonoverlap hypotheses.
\end{proof}

\begin{theorem}[Correspondence of primal and dual branches]
\label{thm:branch-homeomorphism}
Assume that the convex-duality correspondence holds for every $\lambda\in I$, and that $\nabla\mathcal H_\lambda$ and
$\nabla\mathcal H_\lambda^*$ are jointly continuous in the parameter and state variables. Suppose also that these maps are $\GG$-equivariant
and $\nabla\mathcal H_\lambda(0)=0$.

Then the map
\[
\mathscr T(\lambda,z)
=
(\lambda,\nabla\mathcal H_\lambda(z))
\]
defines a $\GG$-equivariant homeomorphism between the primal and dual solution sets. In particular, it preserves connected components,
closures of nontrivial solution branches, intersections with the trivial branch, and isotropy groups.

Moreover, if both maps send bounded sets to bounded sets uniformly on compact parameter intervals, then corresponding solution sets over each such interval are bounded simultaneously.
\end{theorem}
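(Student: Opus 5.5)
We define the primal and dual solution sets
\[
\mathcal S=\{(\lambda,z)\in I\times\HH:F(\lambda,z)=0\},\qquad
\mathcal S^*=\{(\lambda,u)\in I\times\HH:F^*_\lambda(u)=0\},
\]
where $F^*_\lambda(u)=A_\lambda^{-1}u+\nabla\mathcal H^*_\lambda(u)$. We then show that $\mathscr T$ restricts to a bijection $\mathcal S\to\mathcal S^*$ and that its inverse is the explicit map
\[
\mathscr T^{-1}(\lambda,u)=(\lambda,\nabla\mathcal H^*_\lambda(u)).
\]
Bijectivity is pointwise in $\lambda$ and follows from Theorem~\ref{thm:correspondence}, applied at each fixed $\lambda$. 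If $F(\lambda,z)=0$, then $u=\nabla\mathcal H_\lambda(z)$ solves the dual equation, and $\nabla\mathcal H^*_\lambda(u)=z$ by Lemma~\ref{lem:dual-gradient}. The converse direction is symmetric. Both compositions of $\mathscr T$ and the proposed inverse are the identity because $\nabla\mathcal H^*_\lambda=(\nabla\mathcal H_\lambda)^{-1}$ for each $\lambda$.

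Continuity of $\mathscr T$ and of $\mathscr T^{-1}$ on $I\times\HH$ is exactly the assumed joint continuity of $(\lambda,z)\mapsto\nabla\mathcal H_\lambda(z)$ and $(\lambda,u)\mapsto\nabla\mathcal H^*_\lambda(u)$. Restricting to the subspaces $\mathcal S$ and $\mathcal S^*$ preserves continuity, so $\mathscr T|_{\mathcal S}$ is a homeomorphism onto $\mathcal S^*$. Equivariance for the action $g(\lambda,z)=(\lambda,gz)$ follows from the equivariance of $\nabla\mathcal H_\lambda$, and equality of isotropy groups follows as in Theorem~\ref{thm:correspondence}.

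The topological consequences then follow formally. A homeomorphism preserves connected components and closures taken within $\mathcal S$ and $\mathcal S^*$. The trivial branch $I\times\{0\}$ is mapped onto itself because $\nabla\mathcal H_\lambda(0)=0$, and $\nabla\mathcal H^*_\lambda(0)=0$ follows by inversion. Hence nontrivial solutions correspond to nontrivial solutions, intersections with the trivial branch correspond, and so do the closures of the sets of nontrivial solutions.

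One point needs care. Closures should be taken in $I\times\HH$, not only within $\mathcal S$. The set $\mathcal S$ is closed in $I\times\HH$ if $F$ is closed in the appropriate sense. To avoid relying on this, I would use the characterization of $\mathcal S$ as the zero set of the continuous map
\[
(\lambda,z)\mapsto z-\nabla\mathcal H^*_\lambda(-A_\lambda^{-1}\nabla\mathcal H_\lambda(z)),
\]
or, more simply, observe that $\mathcal S^*$ is the zero set of $(\lambda,u)\mapsto A_\lambda^{-1}u+\nabla\mathcal H^*_\lambda(u)$. The latter is continuous, provided $\lambda\mapsto A_\lambda^{-1}$ is continuous, which is part of the standing family hypotheses. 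So $\mathcal S^*$ is closed, and transporting through the global homeomorphism $\mathscr T$ of $I\times\HH$ shows that $\mathcal S$ is closed as well. Therefore closures in the ambient space agree with closures in the solution sets.

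For the boundedness statement, fix a compact $J\subset I$. If $\mathcal S\cap(J\times\HH)$ is bounded, say contained in $J\times B_R$, then its image lies in $J\times\nabla\mathcal H_J(B_R)$, which is bounded by the uniform bounded-to-bounded hypothesis. The reverse implication uses the same hypothesis for $\nabla\mathcal H^*_\lambda$. I expect the main obstacle to be the closedness and continuity subtlety: it needs continuity of $\lambda\mapsto A_\lambda^{-1}$ in operator norm. I would take this from the standing assumptions of the section, namely continuous families with compact inverse.
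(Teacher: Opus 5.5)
Your proposal is correct and follows essentially the same route as the paper. Both use the explicit inverse $(\lambda,u)\mapsto(\lambda,\nabla\mathcal H^*_\lambda(u))$, joint continuity for the homeomorphism, equivariance for the isotropy groups, and the uniform bounded-to-bounded hypothesis for simultaneous boundedness. The closedness detour through continuity of $\lambda\mapsto A_\lambda^{-1}$ is unnecessary: $\mathscr T$ is already a homeomorphism of all of $I\times\HH$, so $\mathscr T(\overline{\mathcal C})=\overline{\mathscr T(\mathcal C)}$ holds for every subset $\mathcal C$, and this is exactly what the paper uses.
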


\begin{proof}
Let $\mathscr S_{\rm pr}$ and $\mathscr S_{\rm du}$
denote the primal and dual solution sets. By convex duality,
\[
u=\nabla\mathcal H_\lambda(z)
\iff z=\nabla\mathcal H_\lambda^*(u).
\]
Hence
\[
\mathscr T^{-1}(\lambda,u)=(\lambda,\nabla\mathcal H_\lambda^*(u)),
\]
and
\[
\mathscr T(\mathscr S_{\rm pr})=\mathscr S_{\rm du},
\qquad
\mathscr T^{-1}\circ\mathscr T=\Id,
\qquad
\mathscr T\circ\mathscr T^{-1}=\Id.
\]
Joint continuity of both maps implies that
$\mathscr T$ is a homeomorphism.

Since the maps are $\GG$-equivariant,
\[
\mathscr T(\lambda,gz)=g\mathscr T(\lambda,z),\qquad g\in\GG,
\]
where $\GG$ acts trivially on the parameter.
The equivariance of $\mathscr T$ and its inverse
gives
\[
\Iso_{\GG}(z)=\Iso_{\GG}(\nabla\mathcal H_\lambda(z)).
\]
Furthermore, for every subset
$\mathcal C\subset\mathscr S_{\rm pr}$,
\[
\mathscr T(\overline{\mathcal C})=\overline{\mathscr T(\mathcal C)}.
\]
Thus connected components and their closures correspond. Since $\nabla\mathcal H_\lambda(0)=0$, the trivial branches correspond as well.

Finally, let $K\subset I$ be compact. The uniform bounded-set assumptions imply that, for every $\mathcal C\subset\mathscr S_{\rm pr}$
with parameter values in $K$,
\[
\mathcal C\text{ is bounded}
\iff
\mathscr T(\mathcal C)\text{ is bounded}.
\]
This completes the proof.
\end{proof}

\begin{corollary}
\label{cor:finite-bif}
Let $\GG$ be finite and suppose the hypotheses of Theorem~\ref{thm:compact-map} hold for a continuously parameterized
finite-dimensional family satisfying $N_\lambda(0)=0$ for all parameters.
Assume that the primal--dual correspondence is jointly continuous and that the trivial solution is isolated at two noncritical endpoints.

If the primal compact-map degree changes between the endpoints, then the dual compact-map degree has the same change. Moreover, nontrivial solutions bifurcate from the trivial branch at some parameter between the endpoints. Corresponding primal and dual solutions have identical isotropy groups.
\end{corollary}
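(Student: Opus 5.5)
The plan is to combine Theorem~\ref{thm:compact-map} at the two endpoints with a standard Krasnosel'skii-type homotopy argument. The isotropy statement will come from Theorem~\ref{thm:branch-homeomorphism}.

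\emph{Step 1: endpoint equalities.} Write $K_\lambda=-A_\lambda^{-1}$. Since the trivial solution is isolated at the noncritical endpoints $\lambda_\pm$, I choose a small invariant ball $\mathcal O=B_r(0)$ containing no other primal zeros for $\lambda=\lambda_\pm$. Corollary~\ref{cor:compact-application}, applied at each endpoint, gives
\[
\deg_{\GG}^{\mathrm{cpt}}(\Id-K_{\lambda_\pm}N_{\lambda_\pm},\mathcal O)
=\deg_{\GG}^{\mathrm{cpt}}(\Id-N_{\lambda_\pm}K_{\lambda_\pm},N_{\lambda_\pm}(\mathcal O)).
\]
Subtracting the two endpoint identities in the Burnside ring shows that the dual degree changes by exactly the same element as the primal degree. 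By excision, the dual degree on $N_{\lambda_\pm}(\mathcal O)$ equals the local index at $u=0$, because $N_{\lambda_\pm}$ maps $\mathcal O$ homeomorphically onto a neighborhood of $0=N_{\lambda_\pm}(0)$ and carries zeros bijectively to zeros.

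\emph{Step 2: bifurcation.} I argue by contradiction, as in the proof of Theorem~\ref{thm:local}. Suppose that for every sufficiently small $r$ there are no nontrivial primal zeros in $[\lambda_-,\lambda_+]\times\overline{B_r(0)}$. Then $(\lambda,z)\mapsto z-K_\lambda N_\lambda(z)$ has no zeros on $[\lambda_-,\lambda_+]\times\partial B_r(0)$. Finite dimensionality and joint continuity make this an admissible equivariant compact homotopy, so homotopy invariance gives equal primal degrees at $\lambda_-$ and $\lambda_+$. This contradicts the assumed change. Taking $r_k\to0$, I therefore get nontrivial zeros $(\lambda_k,z_k)$ with $z_k\to0$ and $\lambda_k\in[\lambda_-,\lambda_+]$. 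By compactness I pass to a subsequence with $\lambda_k\to\lambda_*\in[\lambda_-,\lambda_+]$. The point $\lambda_*$ is not an endpoint: the trivial solution is isolated there, and for $r$ small the ball excludes the nearby nontrivial zeros. This gives a bifurcation point strictly between the endpoints.

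\emph{Step 3: transfer to the dual problem and isotropy.} Set $u_k=N_{\lambda_k}(z_k)$. Then $u_k\ne0$ by injectivity and $N_\lambda(0)=0$, and $u_k\to0$ by joint continuity. Theorem~\ref{thm:branch-homeomorphism}, or directly Theorem~\ref{thm:correspondence}, gives $\Iso_{\GG}(u_k)=\Iso_{\GG}(z_k)$.

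The main obstacle is Step~2: making the parametrized homotopy admissible. The argument requires the zero-freeness on $\partial B_r$ to be uniform in $\lambda$, with $r$ chosen independently of the parameter, which is exactly what the contradiction hypothesis supplies. It also requires the compact-map degree to be homotopy invariant under the jointly continuous family $K_\lambda N_\lambda$. In finite dimensions the latter reduces to continuity of $\lambda\mapsto A_\lambda^{-1}$, which follows from invertibility and continuity of $A_\lambda$.
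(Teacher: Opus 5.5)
Your argument follows the paper's proof. You obtain the endpoint identities from Theorem~\ref{thm:compact-map} and subtract them to get equal degree jumps. You then reach a contradiction through a parametrized homotopy on a uniform small ball. Extracting $\lambda_k\to\lambda_*$ is the same compactness step as the paper's ``common sufficiently small admissible ball,'' run in the direct direction. Finally, you transfer isotropy via Theorem~\ref{thm:branch-homeomorphism}. This establishes a bifurcation point $(\lambda_*,0)$ with $\lambda_*$ in the closed interval $[\lambda_-,\lambda_+]$, which is all the paper's proof gives.

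The one sentence that does not hold as justified is your claim that $\lambda_*$ is not an endpoint. Isolation of $0$ in $\{\lambda_-\}\times B_{r_0}$ says nothing about zeros at parameters $\lambda_k\ne\lambda_-$. It therefore cannot exclude nontrivial zeros $(\lambda_k,z_k)$ with $\lambda_k\downarrow\lambda_-$ and $z_k\to0$.

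Here is a concrete counterexample. Take $V=\R$ with trivial group, $A_\lambda=-1$, and $N_\lambda(z)=(1-\mu)z+z^3$ with $\mu=\lambda-\lambda_-\in[0,1)$. Then $z-K_\lambda N_\lambda(z)=\mu z-z^3$. The origin is isolated at $\lambda_-$, and the local degree changes from $-1$ to $+1$. Yet the only bifurcation point is $(\lambda_-,0)$, since the branches are $z=\pm\sqrt{\mu}$. In this example the linearization $P_{\lambda_-}=-\mu|_{\mu=0}=0$ is singular. That singularity is exactly the information your sentence is silently relying on.

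To place $\lambda_*$ strictly inside, you need isolation that is uniform on a parameter neighbourhood of each endpoint. The implicit function theorem would supply this if ``noncritical'' means an invertible linearization with $C^1$ dependence on $z$. The derivative-free compact-map setting of Theorem~\ref{thm:compact-map} does not provide that. So either drop the endpoint-exclusion sentence, or state such a uniform-isolation hypothesis explicitly.
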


\begin{proof}
By Theorem~\ref{thm:compact-map},
\[
\begin{aligned}
&\deg_{\GG}^{\mathrm{cpt}}
(\Id-K_{\lambda_\pm}N_{\lambda_\pm},B_r)\\
&\quad=
\deg_{\GG}^{\mathrm{cpt}}
(\Id-N_{\lambda_\pm}K_{\lambda_\pm},
N_{\lambda_\pm}(B_r)).
\end{aligned}
\]
Thus the primal and dual degree changes between the endpoints are equal.

If no bifurcation occurred, continuity and compactness of the parameter interval would yield a common sufficiently small admissible ball. Homotopy invariance and excision would then imply equality
of the primal degrees at the endpoints, contrary to the hypothesis.

Finally, Theorem~\ref{thm:branch-homeomorphism} transfers the bifurcating solutions and preserves their isotropy groups.
\end{proof}

\begin{remark}
Suppose the critical kernel $K_*=\Ker P_{\lambda_*}$ is finite-dimensional and an isolated crossing is regular. At the reference value, Lemma~\ref{lem:kernel} identifies $K_*$ with $\Ker Q_{\lambda_*}$ as a $\GG$-representation. If, after an appropriate equivariant spectral decomposition, the dual eigenvalues on $K_*$ change from positive to negative, then the factor in the dual degree changes from $\mathbb I_{\GG}$ to $\Dg(K_*)$. More generally, the before-and-after degree is the product of the common noncritical factor with the degree of the crossing block, with crossing direction taken into account. The change in degree is therefore determined by the corresponding crossing representation.
\end{remark}

\subsection{Global equivariant bifurcation}
\label{subsec:global-bifurcation}

Let $I\subset\R$ be an open interval and define
\[
\mathscr S^*=\{(\lambda,u)\in I\times\HH:
\nabla\Psi_\lambda(u)=0,\ u\ne0\}.
\]
We consider the connected components of $\overline{\mathscr S^*}$ in $I\times\HH$.

We impose the following assumptions for global continuation.

\begin{assumption}
\label{ass:global}
Suppose that:

\begin{enumerate}[label=(G\arabic*)]

\item
For every $\lambda\in I$,
$\Psi_\lambda\in C^1(\HH,\R)$ is $\GG$-invariant and
\[
\nabla\Psi_\lambda(0)=0.
\]
The map
\[
(\lambda,u)\mapsto\nabla\Psi_\lambda(u)
\]
is jointly continuous and continuously differentiable with respect to $u$
near $I\times\{0\}$, with jointly continuous derivative.

The linearizations
\[
Q_\lambda=D_u\nabla\Psi_\lambda(0)
\]
form a norm-continuous family of bounded, self-adjoint,
$\GG$-equivariant Fredholm operators.

\item
For every compact interval $J\subset I$ and every $R>0$, the set
\[
\left\{
(\lambda,u)\in J\times\HH:
\nabla\Psi_\lambda(u)=0,
\|u\|\le R\right\}
\]
is compact.

\item
The family admits an Euler-ring-valued
equivariant gradient degree on bounded
invariant admissible domains, satisfying
excision, additivity, and homotopy
invariance for admissible parameter
homotopies.

Whenever $Q_\lambda$ is invertible,
the origin is an isolated zero and
\[
\graddeg(\nabla\Psi_\lambda,B_r(0))=\Dg(\Eminus{Q_\lambda})
\]
for all sufficiently small $r>0$.

\item
The set
\[
\Sigma=\{\lambda\in I:\Ker Q_\lambda\ne\{0\}\}
\]
is locally finite in $I$.

Moreover, the solution family satisfies the compact-component separation property required by the equivariant gradient-degree
continuation theorem: every relatively compact component of
$\overline{\mathscr S^*}$ admits a bounded invariant separating neighborhood whose boundary contains no nontrivial solutions,
whose only trivial critical points belong to that component, and whose intersections with the trivial branch on the boundary occur only at noncritical parameters.

\end{enumerate}
\end{assumption}

For each $\lambda_j\in\Sigma$, choose
noncritical parameters
\[
\lambda_j^-<\lambda_j<\lambda_j^+
\]
such that
\[
[\lambda_j^-,\lambda_j^+]\cap\Sigma
=\{\lambda_j\}.
\]
Define the local equivariant bifurcation
invariant by
\[
\begin{aligned}
\omega_{\GG}(\lambda_j):={}&
\graddeg(
\nabla\Psi_{\lambda_j^+},B_r(0))\\
&-\graddeg(
\nabla\Psi_{\lambda_j^-},B_r(0)).
\end{aligned}
\]
Here $r>0$ is sufficiently small so that
the origin is the only zero at both
endpoints.

By the linearization principle,
\[
\begin{aligned}
\omega_{\GG}(\lambda_j)={}&\Dg(\Eminus{Q_{\lambda_j^+}})\\
&-\Dg(\Eminus{Q_{\lambda_j^-}}).
\end{aligned}
\]
Homotopy invariance shows that this element is independent of the sufficiently close noncritical endpoints and the choice of sufficiently small $r$.

The following result is an equivariant gradient-degree version of the classical Rabinowitz global bifurcation alternative; see
\cite{Rabinowitz,GolebiewskaRybicki2011,AED,SURVEY}.

\begin{theorem}[Global equivariant bifurcation]
\label{thm:global}
Suppose Assumption~\ref{ass:global} holds.
Let $\lambda_*\in\Sigma$ satisfy
\[
\omega_{\GG}(\lambda_*)\ne0.
\]
Let $\calC$ be the connected component
of $\overline{\mathscr S^*}$ containing
$(\lambda_*,0)$.

Then at least one of the following holds:

\begin{enumerate}[label=(\roman*)]

\item
$\calC$ is not relatively compact
in $I\times\HH$.

\item
$\calC$ meets another trivial critical
point $(\lambda_j,0)$ with
$\lambda_j\ne\lambda_*$.

\end{enumerate}

Moreover, if $\calC$ is relatively compact,
then
\[
\calC\cap(I\times\{0\})=\{(\lambda_*,0),(\lambda_1,0),\ldots,(\lambda_s,0)\}
\]
for finitely many distinct critical
parameters, and
\begin{equation}
\label{eq:cancellation}
\omega_{\GG}(\lambda_*)+\sum_{j=1}^{s}\omega_{\GG}(\lambda_j)
=0\quad\text{in }U(\GG).
\end{equation}
\end{theorem}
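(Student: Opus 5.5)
We argue by contradiction in the standard Rabinowitz fashion, following the equivariant gradient-degree continuation scheme of \cite{GolebiewskaRybicki2011,AED}. Assume that $\calC$ is relatively compact in $I\times\HH$. By (G2), $\overline{\calC}$ is compact. By (G4), $\Sigma$ is locally finite, so $\calC$ meets the trivial branch $I\times\{0\}$ only at finitely many points $(\lambda_*,0),(\lambda_1,0),\ldots,(\lambda_s,0)$, all with parameters in $\Sigma$. Indeed, at a noncritical $\lambda$ the origin is an isolated zero by (G3), and a point of $\overline{\mathscr S^*}$ on the trivial branch must be a limit of nontrivial zeros. This gives the asserted structure of $\calC\cap(I\times\{0\})$. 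Alternative (ii) then amounts to $s\ge1$. Hence it suffices to prove the cancellation identity \eqref{eq:cancellation}: if $s=0$, it would give $\omega_{\GG}(\lambda_*)=0$, contradicting the hypothesis.

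For \eqref{eq:cancellation}, we use the separation property in (G4) to obtain a bounded invariant open set $\Omega\subset J\times\HH$, with $J\subset I$ compact, containing $\calC$. Its boundary contains no nontrivial zeros, its only trivial critical points are the $(\lambda_j,0)$ and $(\lambda_*,0)$, and $\partial\Omega\cap(I\times\{0\})$ consists of noncritical parameters. Shrinking around each critical point, we may assume that near $(\lambda_j,0)$ the set $\Omega$ contains a small cylinder $[\lambda_j^-,\lambda_j^+]\times B_r(0)$. Write $\Omega_\lambda=\{u:(\lambda,u)\in\Omega\}$. We then introduce the auxiliary map
\[
\Phi_\lambda(u)=\nabla\Psi_\lambda(u)\ \text{modified near the trivial branch}.
\]
The cleanest route is Ize's trick. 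For small $\varepsilon>0$, consider on $\Omega$ the family $\nabla\Psi_\lambda$ restricted to $\Omega\setminus(J\times \overline{B_\varepsilon(0)})$, together with the $\lambda$-slices. The function
\[
\lambda\mapsto d(\lambda):=\graddeg\bigl(\nabla\Psi_\lambda,\Omega_\lambda\setminus\overline{B_\varepsilon(0)}\bigr)
\]
is locally constant in $\lambda$ away from the critical parameters, by homotopy invariance (G3). There are no zeros on the lateral boundary, since $\partial\Omega$ carries no nontrivial zeros and $\partial B_\varepsilon$ carries none in the cylinders by choice of $r=\varepsilon$. For $\lambda$ outside the projection of $\overline\Omega$, $d(\lambda)=0$. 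Additivity and excision at a noncritical $\lambda$ give
\[
\graddeg(\nabla\Psi_\lambda,\Omega_\lambda)=d(\lambda)+\graddeg(\nabla\Psi_\lambda,B_\varepsilon(0)),
\]
where the left side is constant in $\lambda$ by homotopy invariance on the whole of $\Omega$, since it has no zeros on $\partial\Omega$. Crossing each critical parameter $\lambda_j$, the last term jumps by $\omega_{\GG}(\lambda_j)$, computed via the linearization principle in (G3), so $d$ jumps by $-\omega_{\GG}(\lambda_j)$. Summing the jumps from a parameter below $\overline\Omega$ to one above it, and using $d=0$ at both ends, yields \eqref{eq:cancellation}.

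The main obstacle is the construction in the second step. We need an admissible invariant neighbourhood whose slice degrees are genuinely defined, with no zeros on the boundary in $\HH$ for each slice, and with the lateral pieces handled correctly at the endpoints of the parameter range. In Rabinowitz's argument this requires a Whyburn-type separation lemma together with compactness. Here the required separation is exactly what (G4) postulates, so we invoke it directly rather than reprove it. The remaining delicate point is that the jump of the small-ball degree at $\lambda_j$ is independent of the choices of $\lambda_j^\pm$ and $r$. This was noted after the definition of $\omega_{\GG}$, via homotopy invariance. Everything else consists of formal additivity and homotopy manipulations in the additive group of $U(\GG)$.
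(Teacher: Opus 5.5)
The paper gives no proof of this theorem. It presents it as the equivariant gradient-degree form of Rabinowitz's alternative, builds the required separation into (G4), and refers to \cite{Rabinowitz,GolebiewskaRybicki2011,AED,SURVEY}. Your proposal therefore has to stand as the standard argument. Your reduction to \eqref{eq:cancellation} is the right one, but the degree bookkeeping in your second step rests on two false claims, so there is a genuine gap.

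First, $\nabla\Psi$ \emph{does} vanish on $\partial\Omega$. The set $\Omega$ is bounded in $J\times\HH$ and contains $(\lambda_*,0)$, so the trivial line must leave it. The exit points are trivial zeros on $\partial\Omega$; (G4) only places them at noncritical parameters, as you noted yourself. Hence $\graddeg(\nabla\Psi_\lambda,\Omega_\lambda)$ is not constant over $\Omega$: it is undefined at the exits and changes by $\pm\Dg(\Eminus{Q_\lambda})$ as the origin enters or leaves $\Omega_\lambda$. Likewise, your decomposition $\graddeg(\nabla\Psi_\lambda,\Omega_\lambda)=d(\lambda)+\graddeg(\nabla\Psi_\lambda,B_\varepsilon(0))$ requires $B_\varepsilon(0)\subset\Omega_\lambda$. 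Used at every noncritical $\lambda$, it would make $d$ jump also at critical parameters not on $\calC$. Where $\Omega_\lambda=\emptyset$ it is simply false, because $\Dg(\Eminus{Q_\lambda})$ is a unit.

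Second, the claim that $\partial B_\varepsilon$ carries no zeros in the cylinders is backwards. Since $\calC$ is connected, once it leaves $\overline{B_\varepsilon}$ it must meet $\{\|u\|=\varepsilon\}$, and it does so at parameters \emph{inside} the intervals $(\lambda_j^-,\lambda_j^+)$. So $d$ is undefined on a whole interval around each $\lambda_j$, and ``$d$ jumps by $-\omega_{\GG}(\lambda_j)$ crossing $\lambda_j$'' is not meaningful as stated.

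What is true, and what you need, is the opposite localization. There is $\varepsilon\in(0,r]$ such that no nontrivial zero $(\lambda,u)\in\overline\Omega$ has $\|u\|\le\varepsilon$ with $\lambda\notin\bigcup_j(\lambda_j^-,\lambda_j^+)$. Otherwise a limit $(\bar\mu,0)\in\overline\Omega$ of such zeros would be a trivial critical point outside the cylinders, which (G4) excludes. This uses the implicit-function consequence of (G1): nontrivial zeros cannot accumulate at $(\lambda,0)$ when $Q_\lambda$ is invertible. The pointwise isolation in (G3), which you invoke in your first paragraph, does not give this; the same correction is needed there.

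The repair runs as follows. Take $[\lambda_j^-,\lambda_j^+]\times\overline{B_r}\subset\Omega$, with the origin the only zero in $\overline{B_r}$ at $\lambda_j^\pm$.

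\begin{enumerate}
\item The degree $d(\lambda)=\graddeg(\nabla\Psi_\lambda,\Omega_\lambda\setminus\overline{B_\varepsilon})$ is admissible and constant on each closed interval complementary to $\bigcup_j(\lambda_j^-,\lambda_j^+)$, because the trivial exits lie in the removed tube. It vanishes on the two outer intervals.
\item On each $[\lambda_j^-,\lambda_j^+]$ the full slice degree $\graddeg(\nabla\Psi_\lambda,\Omega_\lambda)$ is constant, because the trivial segment there is interior to $\Omega$.
\item Additivity at $\lambda_j^\pm$ gives $d(\lambda_j^+)-d(\lambda_j^-)=-\omega_{\GG}(\lambda_j)$.
\end{enumerate}

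Telescoping then yields \eqref{eq:cancellation}. With this repair your outline becomes a correct proof.
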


\begin{remark}
If $I=\R$, Assumption~\ref{ass:global} implies that bounded subsets of the solution set over bounded parameter intervals are relatively compact.
Hence the first alternative in Theorem~\ref{thm:global} implies that
$\calC$ is unbounded in $\R\times\HH$.

For a bounded parameter interval, the first alternative also allows
$\calC$ to approach the boundary of $I$.

The global compactness, degree, and continuation hypotheses are independent of the linear primal--dual comparison. They must be verified for each application. In particular, compactness of $A_\lambda^{-1}$ alone does not establish the equivariant gradient-degree theory
or the global continuation properties required above.
\end{remark}

\section{Application: Periodic Hamiltonian Ring Systems}\label{sec:ode}

Let $n\ge3$ and $z_j(t)\in\R^{2d}$ for
$j\in\Z_n$, with $2\pi$-periodic boundary
conditions. Put
\[
J=\begin{pmatrix}
0&-I_d\\ I_d&0
\end{pmatrix},
\qquad
(L_nz)_j=2z_j-z_{j+1}-z_{j-1}.
\]

For $a,\beta>0$, $c\ge0$, and $p>2$,
consider
\begin{equation}
\label{eq:ring}
-J\dot z_j-\lambda z_j-c(L_nz)_j
+az_j+\beta\|z\|_{L^2}^{p-2}z_j=0,
\qquad j\in\Z_n,
\end{equation}
where
\[
\|z\|_{L^2}^2
=\sum_{k=0}^{n-1}
\int_0^{2\pi}|z_k(t)|^2\,dt.
\]

The equation is invariant under
\[
\GG=D_n\times\Z_2\times S^1,
\]
acting by network permutations, simultaneous sign reversal, and temporal translations.

Define
\[
A_\lambda=-J\frac{\du}{\du t}-\lambda\Id-cL_n,
\qquad
\DD(A_\lambda)=H^1_{\mathrm{per}}.
\]
The operator $A_\lambda$ is self-adjoint and $\GG$-equivariant, and has compact resolvent.

The associated potential functional is
\[
\mathcal H(z)=\frac a2\|z\|_{L^2}^2+\frac\beta p\|z\|_{L^2}^p.
\]
Moreover,
\[
D^2\mathcal H(0)=a\Id,
\qquad
D^2\mathcal H(z)\ge a\Id.
\]
Thus $\mathcal H$ is uniformly convex and $\nabla\mathcal H$ is a
$\GG$-equivariant homeomorphism of $\HH$ onto itself.

The primal Hessian at zero is
\[
P_\lambda=A_\lambda+a\Id.
\]

The real $D_n$ network modes have
Laplacian eigenvalues
\[
\mu_\ell=4\sin^2\left(\frac{\pi\ell}{n}\right),
\qquad\ell=0,\ldots,\left\lfloor\frac n2\right\rfloor.
\]

The temporal Hamiltonian block has spectral values $\pm m$ for $m\ge1$
and $0$ for $m=0$. Consequently, the primal Hessian eigenvalues are
\[
\sigma_{\ell,m}^{\pm}(\lambda)=\pm m-\lambda-c\mu_\ell+a,
\]
with the two signs coinciding for $m=0$. The critical parameters are
\begin{equation}
\label{eq:odecrit}
\lambda_{\ell,m}^{\pm}=\pm m-c\mu_\ell+a.
\end{equation}

For $m\ge1$, the corresponding kernel representations are determined
by the real symplectic Fourier decomposition, avoiding duplication
of the temporal eigenspaces.

For a critical parameter $\lambda_*$ in \eqref{eq:odecrit}, the unshifted $A_{\lambda_*}$ may or may not be invertible because it is distinct from the primal Hessian. To obtain a uniform inverse on a small
parameter interval, choose $0<\alpha<a$
such that
\[
\widetilde A_\lambda=A_\lambda+\alpha\Id
\]
is invertible throughout the interval,
and define
\[
\widetilde{\mathcal H}(z)=\mathcal H(z)-\frac{\alpha}{2}\|z\|_{L^2}^2.
\]
Then
\[
D^2\widetilde{\mathcal H}(0)=(a-\alpha)\Id>0,
\]
and the original equation becomes
\[
\widetilde A_\lambda z+\nabla\widetilde{\mathcal H}(z)=0.
\]

The dual Hessian is
\begin{equation}
\label{eq:ringQ}
 Q_\lambda=\widetilde A_\lambda^{-1}+(a-\alpha)^{-1}\Id,
\end{equation}
so that on a nonzero Fourier--network block
\begin{equation}
\label{eq:ringqblock}
 q_{\ell,m}^{\pm}(\lambda)
 =\frac{1}{\pm m-\lambda-c\mu_\ell+\alpha}+\frac{1}{a-\alpha}.
\end{equation}
The zero condition for \eqref{eq:ringqblock} is precisely \eqref{eq:odecrit}. This explicitly illustrates the kernel correspondence and the fact that the dual Hessian has only finitely many negative modes.

\begin{proposition}
\label{prop:ring}
Let $\lambda$ belong to a shift-admissible
interval, and suppose $P_\lambda$ is
invertible. Then
\begin{equation}
\label{eq:ring-index}
\mathfrak D_{\GG}
(P_\lambda;\widetilde A_\lambda)=\Dg(\Eminus{Q_\lambda}).
\end{equation}

Consequently, for noncritical parameters
$\lambda_-<\lambda_+$,
\[
\begin{aligned}
&\mathfrak D_{\GG}
(P_{\lambda_+};\widetilde A_{\lambda_+})-\mathfrak D_{\GG}
(P_{\lambda_-};\widetilde A_{\lambda_-})\\
&\qquad=
\Dg(\Eminus{Q_{\lambda_+}})-\Dg(\Eminus{Q_{\lambda_-}}).
\end{aligned}
\]

If $\lambda_*$ is the only critical parameter in $[\lambda_-,\lambda_+]$,
and Assumption~\ref{ass:nonlinear} and the joint continuity hypotheses of
Theorem~\ref{thm:branch-homeomorphism} hold, then a nonzero common index
jump implies local bifurcation of nontrivial primal and dual solutions
at $(\lambda_*,0)$, with corresponding solutions having identical isotropy groups.
\end{proposition}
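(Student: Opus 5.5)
The plan is to reduce the first identity \eqref{eq:ring-index} to Theorem~\ref{thm:infinite-linear}, applied with reference operator $\widetilde A_\lambda$ and nonlinear Hessian $\widetilde B=(a-\alpha)\Id$.

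\emph{Step 1: verify the hypotheses of Theorem~\ref{thm:infinite-linear}.} On the shift-admissible interval, $\widetilde A_\lambda=-J\tfrac{\du}{\du t}-\lambda\Id-cL_n+\alpha\Id$ is self-adjoint on $H^1_{\mathrm{per}}$ and $\GG$-equivariant. It is invertible by the choice of $\alpha$. It has compact inverse because its eigenvalues $\pm m-\lambda-c\mu_\ell+\alpha$ have finite multiplicity (finitely many network modes and a two-dimensional real Fourier block for each $m$) and tend to $\pm\infty$. The operator $\widetilde B=(a-\alpha)\Id$ satisfies $b_0=b_1=a-\alpha>0$. Moreover $\widetilde A_\lambda+\widetilde B=A_\lambda+a\Id=P_\lambda$, which is invertible by hypothesis.

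\emph{Step 2: conclude the identity.} Theorem~\ref{thm:infinite-linear} then gives that $\mathfrak D_{\GG}(P_\lambda;\widetilde A_\lambda)$ is well defined and equals $\Dg(\Eminus{Q_\lambda})$, with $Q_\lambda$ as in \eqref{eq:ringQ}. As a consistency check I would note that the Galerkin projections onto finitely many Fourier--network blocks are spectral projections of $\widetilde A_\lambda$ commuting with $\GG$. Since $\widetilde B$ is scalar, $Q_N$ is exactly the restriction of $Q_\lambda$, so stabilization is immediate. The negative modes of $Q_\lambda$ are the blocks with $q^\pm_{\ell,m}(\lambda)<0$ in \eqref{eq:ringqblock}. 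These require $-(a-\alpha)<\pm m-\lambda-c\mu_\ell+\alpha<0$, which holds for only finitely many $(\ell,m,\pm)$.

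\emph{Step 3: the difference formula.} This follows by subtracting the identity at $\lambda_+$ and $\lambda_-$, as in Theorem~\ref{thm:endpoint}.

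\emph{Step 4: bifurcation.} I would invoke Theorem~\ref{thm:local}. The hypotheses of Theorem~\ref{thm:endpoint} are supplied by Steps 1--2 at the endpoints, and Assumption~\ref{ass:nonlinear} is assumed. A nonzero jump $\omega_{\GG}(\lambda_*)$ then yields nontrivial dual critical points near $(\lambda_*,0)$. Theorem~\ref{thm:branch-homeomorphism}, applied to the Legendre map $\nabla\widetilde{\mathcal H}$ with the assumed joint continuity, transfers them to primal solutions with equal isotropy. This uses Theorem~\ref{thm:correspondence}, where $\nabla\widetilde{\mathcal H}$ is a homeomorphism because $D^2\widetilde{\mathcal H}\ge(a-\alpha)\Id$.

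\emph{Main obstacle.} The difficulty is that a single $\alpha$ must keep $\widetilde A_\lambda$ invertible on the whole interval $[\lambda_-,\lambda_+]$. Otherwise the reference degree jumps, as described in Theorem~\ref{thm:infinite-shift}, and the jumps of $\mathfrak D_{\GG}$ no longer isolate the crossing of $P_\lambda$. I would handle this by interpreting ``shift-admissible'' to mean exactly that $\widetilde A_\lambda$ stays invertible on the interval. If the interval must be split, I would use Theorem~\ref{thm:infinite-shift} to show that the dual index $\Dg(\Eminus{Q_\lambda})$ does not depend on the admissible shift when $\mathscr R_{12}=\{0\}$.
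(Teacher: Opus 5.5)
Your proposal is correct and follows the paper's proof: you apply Theorem~\ref{thm:infinite-linear} to the decomposition $P_\lambda=\widetilde A_\lambda+(a-\alpha)\Id$, subtract the resulting identities at $\lambda_\pm$, and then combine Theorem~\ref{thm:local} with Theorem~\ref{thm:branch-homeomorphism} (applied to the Legendre map $\nabla\widetilde{\mathcal H}$) to obtain bifurcation with matching isotropy groups. Your additional checks are correct refinements that the paper leaves implicit: compactness of $\widetilde A_\lambda^{-1}$ from the explicit spectrum, and the observation that $Q_N$ is exactly the restriction of $Q_\lambda$ because the shifted Hessian $(a-\alpha)\Id$ is scalar.
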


\begin{proof}
By construction,
\[
P_\lambda=\widetilde A_\lambda +(a-\alpha)\Id,
\]
where
\[
\widetilde A_\lambda^{-1}
\text{ is compact},
\qquad
a-\alpha>0.
\]
Moreover,
\[
Q_\lambda=\widetilde A_\lambda^{-1}+(a-\alpha)^{-1}\Id.
\]
Theorem~\ref{thm:infinite-linear}
therefore gives
\[
\mathfrak D_{\GG}
(P_\lambda;\widetilde A_\lambda)=\Dg(\Eminus{Q_\lambda}).
\]

Applying this identity at $\lambda_-$ and $\lambda_+$
and subtracting yields equality of the linear index jumps.

If the common jump is nonzero, Theorem~\ref{thm:local} gives local dual bifurcation. Theorem~\ref{thm:branch-homeomorphism}
then transfers the bifurcating solutions to the primal equation and preserves their isotropy groups.
\end{proof}

\begin{remark}
For $p>2$, the functional
\[
\mathcal H(z)
=\frac a2\|z\|_{L^2}^2
+\frac\beta p\|z\|_{L^2}^p
\]
belongs to $C^2(\HH,\R)$. Its gradient is globally defined on $\HH$ and is a $\GG$-equivariant homeomorphism.

Consequently, the abstract Clarke duality assumptions hold in the
$L^2$ setting. The linear index comparison follows from
Theorem~\ref{thm:infinite-linear}. Nonlinear bifurcation conclusions
require the corresponding degree and admissibility hypotheses.
\end{remark}

\begin{example}[A five-site dihedral Hamiltonian ring]
\label{ex:D5-ring}
Consider equation~\eqref{eq:ring} with
\[
n=5,\quad d=1,\quad a=\frac75,\quad
\beta=1,\quad p=4,\quad c=\frac12,\quad
\alpha=\frac1{10}.
\]
The equation is
\[
-J\dot z_j-\lambda z_j-\frac12(L_5z)_j+\frac75z_j
+\|z\|_{L^2}^2z_j=0,\qquad j\in\Z_5.
\]
Its symmetry group is
\[
\GG=D_5\times\Z_2\times S^1.
\]

We use the reference decomposition
\[
P_\lambda=A_{\lambda,\alpha}+B_\alpha,
\]
where
\[
\begin{aligned}
A_{\lambda,\alpha}
&=-J\frac{\du}{\du t}-\lambda\Id
-\frac12L_5+\frac1{10}\Id,\\
B_\alpha&=\frac{13}{10}\Id.
\end{aligned}
\]
At the trivial solution, the dual Hessian is
\[
Q_\lambda=A_{\lambda,\alpha}^{-1} +\frac{10}{13}\Id.
\]

The spatial Laplacian eigenvalues are
\[
\mu_0=0,\qquad
\mu_1=\frac{5-\sqrt5}{2},\qquad
\mu_2=\frac{5+\sqrt5}{2},
\]
with multiplicities $1,2,2$, respectively.
The dual eigenvalues are
\[
q_{\ell,m}^{\pm}(\lambda)
=\frac{1}{\pm m-\lambda-\mu_\ell/2+1/10}+\frac{10}{13},\qquad m\ge1,
\]
while for $m=0$,
\[
q_{\ell,0}(\lambda)=\frac{1}{
-\lambda-\mu_\ell/2+1/10}+\frac{10}{13}.
\]
The negativity condition is
\[
q_{\ell,m}^{\pm}(\lambda)<0
\iff-\frac{13}{10}
<\pm m-\lambda-\frac{\mu_\ell}{2}+\frac1{10}<0.
\]
The crossing occurs when
\[
\sigma_{2,1}^{+}(\lambda_*)=1-\lambda_*-\frac{\mu_2}{2}
+\frac75=0.
\]
Thus
\[
\lambda_*=\frac{23-5\sqrt5}{20}.
\]

Choose two noncritical parameters $\lambda_-<\lambda_*<\lambda_+$
sufficiently close to $\lambda_*$.

At $\lambda_-$, the complete list of distinct negative eigenvalues is
\[
\begin{array}{c|c}
(\ell,m,\pm)&q_{\ell,m}^{\pm}(\lambda_-)
\\ \hline
(0,0)&
\displaystyle\frac{1}{\frac1{10}-\lambda_-}+\frac{10}{13}\\[6pt]
(1,0)&
\displaystyle\frac{1}{
\frac{5\sqrt5-23}{20}-\lambda_-}+\frac{10}{13}\\[6pt]
(1,1,+)&
\displaystyle\frac{1}{
\frac{5\sqrt5-3}{20}-\lambda_-}+\frac{10}{13}\\[6pt]
(2,1,+)&
\displaystyle\frac{1}{
-\frac{3+5\sqrt5}{20}-\lambda_-}+\frac{10}{13}\\[6pt]
(2,2,+)&
\displaystyle\frac{1}{
\frac{17-5\sqrt5}{20}-\lambda_-}+\frac{10}{13}
\end{array}
\]
All five eigenvalues are negative.

At $\lambda_+$, the complete list of distinct negative eigenvalues is
\[
\begin{array}{c|c}
(\ell,m,\pm)&q_{\ell,m}^{\pm}(\lambda_+)
\\ \hline
(0,0)&
\displaystyle\frac{1}{\frac1{10}-\lambda_+}
+\frac{10}{13}\\[6pt]
(1,0)&
\displaystyle\frac{1}{
\frac{5\sqrt5-23}{20}-\lambda_+}
+\frac{10}{13}\\[6pt]
(1,1,+)&
\displaystyle\frac{1}{
\frac{5\sqrt5-3}{20}-\lambda_+}+\frac{10}{13}\\[6pt]
(2,2,+)&
\displaystyle\frac{1}{
\frac{17-5\sqrt5}{20}-\lambda_+}+\frac{10}{13}
\end{array}
\]
All remaining dual eigenvalues are
positive at both endpoints.

Let $W_{\ell,0}$ denote the stationary
representation and $W_{\ell,m}^{+}$
the positive Hamiltonian representation
for $m\ge1$. Then
\[
\begin{aligned}
E^-(Q_{\lambda_-})
={}&W_{0,0}\oplus W_{1,0}
\oplus W_{1,1}^{+}\\
&\oplus W_{2,1}^{+}
\oplus W_{2,2}^{+},\\
E^-(Q_{\lambda_+})
={}&W_{0,0}\oplus W_{1,0}
\oplus W_{1,1}^{+}
\oplus W_{2,2}^{+}.
\end{aligned}
\]

The normalized primal indices are
\[
\begin{aligned}
\mathfrak I_{\rm du}(\lambda_-)
={}&\Dg(W_{0,0})\starprod
\Dg(W_{1,0})\\
&\starprod\Dg(W_{1,1}^{+})
\starprod\Dg(W_{2,1}^{+})
\starprod\Dg(W_{2,2}^{+}),\\
\mathfrak I_{\rm du}(\lambda_+)
={}&\Dg(W_{0,0})\starprod
\Dg(W_{1,0})\\
&\starprod\Dg(W_{1,1}^{+})
\starprod\Dg(W_{2,2}^{+}).
\end{aligned}
\]
Since the stationary representations
are sums of two equivalent real
representations,
\[
\Dg(W_{0,0})=\Dg(W_{1,0})
=\mathbb I_{\GG}.
\]

Therefore,
\[
\begin{aligned}
\mathfrak I_{\rm du}(\lambda_-)
&=\Dg(W_{1,1}^{+})
\starprod\Dg(W_{2,1}^{+})
\starprod\Dg(W_{2,2}^{+}),\\
\mathfrak I_{\rm du}(\lambda_+)
&=\Dg(W_{1,1}^{+})
\starprod\Dg(W_{2,2}^{+}).
\end{aligned}
\]
The equivariant degree jump is
\[
\omega_{\GG}(\lambda_*)
=\Dg(W_{1,1}^{+})
\starprod\Dg(W_{2,2}^{+})
\starprod\left(
\mathbb I_{\GG}-\Dg(W_{2,1}^{+})
\right).
\]

We compute the basic equivariant gradient degrees
associated with the negative spectral representations.
Using the amalgamated subgroup notation introduced
in Appendix~\ref{app:amalgamated}, we obtain
\[
\begin{aligned}
\Dg(W_{1,1}^{+})
={}&(D_5^p\times S^1)
-(H_{1,1}^{\mathrm{rot}})
-(H_1^{\mathrm p})-(H_1^{\mathrm d})
+(H_1^\delta),\\[4pt]
\Dg(W_{2,2}^{+})
={}&(D_5^p\times S^1)
-(H_{2,2}^{\mathrm{rot}})
-(H_2^{\mathrm p})-(H_2^{\mathrm d})
+(H_2^\delta),\\[4pt]
\Dg(W_{2,1}^{+})
={}&(D_5^p\times S^1)
-(H_{2,1}^{\mathrm{rot}})
-(H_1^{\mathrm p})-(H_1^{\mathrm d})
+(H_1^\delta).
\end{aligned}
\]

Here the rotational subgroups are distinguished
by their quotient homomorphisms:
\[
H_{\ell,m}^{\mathrm{rot}}
=
\Z_5^p
\,{}^{\varphi_\ell}\!\times_{\Z_{10}}^{\psi_m}
\Z_{10m},
\]
where
\[
\varphi_\ell(r)=R^{2\ell},
\qquad
\varphi_\ell(\delta)=R^5,
\qquad
\psi_m(T_m)=R.
\]
In particular, $H_{1,1}^{\mathrm{rot}}$
and $H_{2,1}^{\mathrm{rot}}$ have
different quotient identifications.

By the product property of the equivariant
gradient degree and the square-zero
multiplication of the one-dimensional
Weyl-group terms, the bifurcation invariant is
\[
\begin{aligned}
\omega_{\GG}(\lambda_*)
={}&
\Dg(W_{1,1}^{+})
\starprod
\Dg(W_{2,2}^{+})\starprod
\left(
\mathbb I_{\GG}
-\Dg(W_{2,1}^{+})
\right)\\
={}&
(H_{2,1}^{\mathrm{rot}})
+(H_1^{\mathrm p})
+(H_1^{\mathrm d})
-(H_1^\delta).
\end{aligned}
\]

The maximal orbit types are
\[
(H_{2,1}^{\mathrm{rot}}),
\qquad
(H_1^{\mathrm p}),
\qquad
(H_1^{\mathrm d}).
\]
Therefore, under the hypotheses of Theorem~\ref{thm:local} and the
maximal-orbit-type detection property, nontrivial periodic solutions bifurcate from $(\lambda_*,0)$ with isotropy containing a subgroup conjugate to each of the corresponding maximal subgroups.
The primal--dual correspondence preserves the isotropy groups of these solutions.
\end{example}

\section{Conclusion}

In this paper, we established an equivariant primal--dual correspondence for strongly indefinite Hamiltonian systems through Clarke duality.
We derived an identity relating the negative spectral representations of the primal and dual Hessians, which leads to a comparison of their equivariant gradient degrees. For strongly indefinite operators,
we introduced a Galerkin-normalized local index and proved its stabilization under suitable compactness and positivity assumptions.
The resulting index is determined by the finite negative spectral representation of the dual Hessian. We also established the correspondence between primal and dual crossing forms and their
equivariant spectral flows.

For nonlinear problems, we obtained an additive local degree comparison at nondegenerate fixed solutions for compact Lie group actions.
For finite symmetry groups, the ordinary equivariant compact-map degree provides a separate comparison that also applies to
degenerate solutions. These results clarify the relationship between primal and dual degrees while distinguishing the linear,
nonlinear gradient, and compact-map settings.
The application to periodic Hamiltonian ring systems illustrates the computation of the dual degree and its use in detecting equivariant
bifurcation under the required admissibility hypotheses.

Several questions remain for future research. One direction is to extend the nonlinear gradient-degree comparison to degenerate
critical points and nontrivial critical orbits. In particular, establishing a general comparison in the full Euler ring for compact Lie group actions would strengthen the connection between Clarke duality and equivariant bifurcation theory. Another direction is to
develop a global bifurcation theory based on the normalized primal index, with explicit compactness and continuation conditions.

Further investigations may also address resonant reference operators and more general strongly indefinite problems for which the
compact-inverse or uniform positivity assumptions are not available. Extensions to elliptic systems, coupled Hamiltonian networks,
and other symmetric variational equations could provide additional applications of the primal--dual degree comparison.

\section*{Data Availability Statement}
No new data were created or analyzed in this study.

\appendix

\section{Computation of Basic Equivariant Gradient Degrees}
\label{app:basic-degree}

Let
\[
\GG=\widehat{\Gamma}\times S^1,
\]
where $\widehat{\Gamma}$ is a finite group.

We describe the computation of the basic equivariant gradient degrees associated with the stationary modes $m=0$ and the nonstationary modes $m\ge1$.

\subsection{Basic equivariant gradient degrees for $m=0$}

Let $W_{\ell,0}$ be a finite-dimensional orthogonal $\GG$-representation corresponding to the stationary mode $m=0$. The temporal $S^1$-action
on $W_{\ell,0}$ is trivial.

Denote by $\Phi_0(\GG;W_{\ell,0})$ the set of conjugacy classes $(H)$ of isotropy subgroups occurring in $W_{\ell,0}$ whose Weyl groups are finite.

Suppose
\[
\Phi_0(\GG;W_{\ell,0})
=
\{(H_1),\ldots,(H_q)\},
\]
where the orbit types are ordered compatibly with the partial order induced by subgroup inclusion up to conjugacy.

The finite-Weyl-group component of the basic equivariant gradient degree is
\[
\deg^{\,0}_{W_{\ell,0}}:=\deg_\GG\bigl(-\Id,
B_\epsilon(W_{\ell,0})\bigr),
\]
where $B_\epsilon(W_{\ell,0})$ is the ball of radius $\epsilon>0$.

It admits the expansion
\[
\deg^{\,0}_{W_{\ell,0}}=\sum_{(H)\in\Phi_0(\GG;W_{\ell,0})}n_H(H),
\]
where $n_H\in\mathbb Z$.

For every subgroup $H\le \GG$, the restriction of $-\Id$ to the corresponding fixed-point subspace is
\[
(-\Id)^H=-\Id_{W_{\ell,0}^H}.
\]
Its Brouwer degree is
\[
\deg\bigl((-\Id)^H,
B_\epsilon(W_{\ell,0}^H)\bigr)=(-1)^{\dim W_{\ell,0}^H}.
\]

For closed subgroups $L,H\le \GG$, define
\[
N_\GG(L,H)=\{g\in \GG:gLg^{-1}\le H\},
\]
and
\[
n_\GG(L,H)=\left|N_\GG(L,H)/N_\GG(H)\right|,
\]
where
\[
N_\GG(H)=\{g\in \GG:gHg^{-1}=H\}.
\]
The Weyl group of $H$ is
\[
W_\GG(H):=N_\GG(H)/H.
\]

The coefficients $n_{H_s}$ are determined
recursively by
\[
n_{H_s}=\frac{(-1)^{\dim W_{\ell,0}^{H_s}}
-\displaystyle\sum_{l=1}^{s-1}
n_{H_l}\, n_\GG(H_s,H_l)\,
|W_\GG(H_l)|}{|W_\GG(H_s)|}, \qquad s=1,\ldots,q.
\]

Thus the coefficients are obtained from the dimensions of the fixed-point spaces and the subgroup incidence numbers.

Since the temporal action is trivial, all isotropy groups occurring in
$W_{\ell,0}$ contain the temporal factor $S^1$. Therefore, the basic
equivariant gradient degree is determined by its finite-Weyl-group component:
\[
\Dg(W_{\ell,0})=\deg^{\,0}_{W_{\ell,0}}
\]

\subsection{Basic equivariant gradient degrees for $m\ge1$}
\label{app:one-parameter-basic-maps}

We now consider a finite-dimensional orthogonal $\GG$-representation $W_{\ell,m}$, where $m\ge1$, on which the temporal $S^1$
factor acts nontrivially.

Let $\Phi_1(\GG;W_{\ell,m})$ denote the set of conjugacy classes $(H)$
of isotropy subgroups occurring in $W_{\ell,m}$ whose Weyl groups
have dimension one.

Assume that
\[
\Phi_1(\GG;W_{\ell,m})=\{(H_1),\ldots,(H_q)\}
\]
is finite, where the orbit types are ordered compatibly with the corresponding partial order.

The one-parameter contribution is written as
\[
\deg^{S^1}_{W_{\ell,m}}=\sum_{(H)\in\Phi_1(\GG;W_{\ell,m})} n_H(H),
\]
where $n_H\in\mathbb Z$.

For $(H)\in\Phi_1(\GG)$, the Weyl group $W_\GG(H)$ has dimension one. We denote the order of its finite quotient by
\[
\left|W_\GG(H)/S^1\right|,
\]
where $S^1$ in this quotient represents the circle component of the Weyl group.

For every $H_s$, the fixed-point subspace is
\[
W_{\ell,m}^{H_s}
=\left\{v\in W_{\ell,m}:
hv=v\text{ for all }h\in H_s\right\}.
\]

Given an invariant neighborhood $\Omega\subset W_{\ell,m}$,
define
\[
(-\Id)^{H_s}=
(-\Id)|_{W_{\ell,m}^{H_s}},
\qquad
\Omega^{H_s}=\Omega\cap W_{\ell,m}^{H_s}.
\]

Let
\[
\deg_1((-\Id)^{H_s},\Omega^{H_s})
\]
denote the coefficient corresponding to the trivial orbit type $(\mathbb Z_1)$ in the associated $S^1$-equivariant degree.

The coefficients $n_{H_s}$ are obtained from the recursive formula
\[
n_{H_s}
=\frac{\deg_1((-\Id)^{H_s},\Omega^{H_s})
-\displaystyle\sum_{l=1}^{s-1}
n_{H_l}\, n_\GG(H_s,H_l)\,
\left|W_\GG(H_l)/S^1\right|}{\left|W_\GG(H_s)/S^1\right|}, \qquad s=1,\ldots,q.
\]

Hence the one-parameter contribution is computed from the $S^1$-equivariant degrees of the restricted maps and the corresponding incidence numbers.

Following~\cite{Dab1}, the basic equivariant gradient degree of the
nonstationary representation is
\[
\Dg(W_{\ell,m})=(\GG)-\deg^{S^1}_{W_{\ell,m}},\qquad m\ge1.
\]
These formulas are used to compute the basic equivariant gradient degrees
of the representations appearing in the Hamiltonian ring applications.

\section{Amalgamated Subgroups for the Five-Site Hamiltonian Ring}
\label{app:amalgamated}

\subsection{Fiber-product notation}

Let $G_1$ and $G_2$ be groups, and let $K_i\le G_i$. Given epimorphisms
\[
\varphi:K_1\longrightarrow Q,\qquad
\psi:K_2\longrightarrow Q,
\]
onto a common quotient $Q$, define the fiber product
\[
K_1\,{}^{\varphi}\!\times_Q^{\psi}K_2
=\{(g_1,g_2)\in K_1\times K_2:
\varphi(g_1)=\psi(g_2)\}.
\]
Writing $N_1=\ker\varphi$ and $N_2=\ker\psi$, we use
\[
\amal{K_1}{N_1}{Q}{N_2}{K_2}
:=K_1\,{}^{\varphi}\!\times_Q^{\psi}K_2,
\qquad K_i/N_i\simeq Q.
\]
The quotient maps are part of the specification of this subgroup;
identical kernel and quotient data need not identify the same embedded
subgroup. Goursat's lemma gives such a description for every subgroup
of $G_1\times G_2$.

\subsection{Finite spatial subgroups of $D_5\times\Z_2$}

For the five-site ring, put
\[
\widehat\Gamma=D_5\times\Z_2,
\qquad \GG=\widehat\Gamma\times S^1.
\]
We realize $\widehat\Gamma$ as a permutation group generated by
\[
r=(1\,2\,3\,4\,5),\qquad
s=(2\,5)(3\,4),\qquad
\delta=(6\,7).
\]
Thus
\[
r^5=s^2=\delta^2=e,\qquad srs=r^{-1},\qquad
[r,\delta]=[s,\delta]=e.
\]
We identify $g\in D_5$ with $(g,1)$ and $g\delta$ with $(g,-1)$.
For $H\le D_5$, the superscript $p$ means
$H^p=H\times\Z_2$. In particular,
\[
\begin{aligned}
D_1&=\langle s\rangle,
&D_1^d&=\langle s\delta\rangle,
&D_1^p&=\langle s,\delta\rangle,\\
\Z_5^p&=\langle r,\delta\rangle,
&D_5^p&=\langle r,s,\delta\rangle,
&\Z_1^p&=\langle\delta\rangle.
\end{aligned}
\]
The homomorphism $z:D_5\to\Z_2$ defined by
$z(r)=1$ and $z(s)=-1$ has graph
$D_5^z=\langle r,s\delta\rangle$.
Representatives of the conjugacy classes of subgroups of
$\widehat\Gamma$ are given in Table~\ref{tab:D5Z2-subgroups}.

\begin{table}[htbp]
\centering
\caption{Conjugacy-class representatives of subgroups of $D_5\times\Z_2$.}
\label{tab:D5Z2-subgroups}
\renewcommand{\arraystretch}{1.16}
\setlength{\tabcolsep}{5pt}
\small
\begin{tabular}{@{}lll@{\qquad}lll@{}}
\toprule
Subgroup & Generators & Order & Subgroup & Generators & Order\\
\midrule
$\Z_1$ & $\langle e\rangle$ & $1$
& $D_1^p$ & $\langle s,\delta\rangle$ & $4$\\
$D_1$ & $\langle s\rangle$ & $2$
& $\Z_5$ & $\langle r\rangle$ & $5$\\
$D_1^d$ & $\langle s\delta\rangle$ & $2$
& $D_5$ & $\langle r,s\rangle$ & $10$\\
$\Z_1^p$ & $\langle\delta\rangle$ & $2$
& $D_5^z$ & $\langle r,s\delta\rangle$ & $10$\\
$\Z_5^p$ & $\langle r,\delta\rangle$ & $10$
& $D_5^p$ & $\langle r,s,\delta\rangle$ & $20$\\
\bottomrule
\end{tabular}
\end{table}

On the stationary Fourier mode $m=0$, the temporal factor acts
trivially. Accordingly, any stationary isotropy subgroup has the
form $H\times S^1$, where $H\le\widehat\Gamma$. Some useful
direct-product subgroup symbols are displayed in
Table~\ref{tab:D5-stationary-subgroups}; the table fixes notation
without asserting that every listed subgroup occurs for every mode.

\begin{table}[htbp]
\centering
\caption{Direct-product subgroup notation for stationary modes.}
\label{tab:D5-stationary-subgroups}
\renewcommand{\arraystretch}{1.12}
\small
\begin{tabular}{@{}lll@{}}
\toprule
Subgroup & Defining generators & Dimension\\
\midrule
$D_5^p\times S^1$ & $\langle r,s,\delta\rangle\times S^1$ & $1$\\
$D_1^p\times S^1$ & $\langle s,\delta\rangle\times S^1$ & $1$\\
$D_1\times S^1$ & $\langle s\rangle\times S^1$ & $1$\\
$D_1^d\times S^1$ & $\langle s\delta\rangle\times S^1$ & $1$\\
$\Z_1\times S^1$ & $\{e\}\times S^1$ & $1$\\
\bottomrule
\end{tabular}
\end{table}

\subsection{Amalgamated subgroups for the nonstationary modes}

We now record the orbit-type symbols used for the positive Hamiltonian
representations $W_{1,1}^{+}$, $W_{2,1}^{+}$, and $W_{2,2}^{+}$.
Write $\Z_q\le S^1$ for the group of $q$-th roots of unity, and
let $\Z_2=\langle\delta\rangle$ when it occurs as the first
factor of a fiber product below. For $m\ge1$, put
\[
T_m=e^{\frac{2\pi i}{10m}},\qquad
U_m=e^{\frac{2\pi i}{2m}}=T_m^5.
\]
The following four families contain the subgroup symbols appearing
in the basic-degree calculations:
\[
\begin{aligned}
H_{\ell,m}^{\mathrm{rot}}
&:=\amal{\Z_5^p}{\Z_1}{\Z_{10}}{\Z_m}{\Z_{10m}},\\[3pt]
H_m^{\mathrm{p}}
&:=\amal{D_1^p}{D_1}{\Z_2}{\Z_m}{\Z_{2m}},\\[3pt]
H_m^{\mathrm{d}}
&:=\amal{D_1^p}{D_1^d}{\Z_2}{\Z_m}{\Z_{2m}},\\[3pt]
H_m^{\delta}
&:=\amal{\Z_2}{\Z_1}{\Z_2}{\Z_m}{\Z_{2m}}.
\end{aligned}
\]
Here $\ell=1,2$ for the rotationally twisted family; its quotient identification depends on $\ell$. The superscripts on $H_m^{\mathrm p}$ and $H_m^{\mathrm d}$ distinguish their respective kernels $D_1$ and $D_1^d$.

Let $R$ be a generator of the abstract quotient $\Z_{10}$. For $H_{\ell,m}^{\mathrm{rot}}$, choose
\[
\begin{aligned}
\varphi_\ell(r)&=R^{2\ell},
&\varphi_\ell(\delta)&=R^5,
&\psi_m(T_m)&=R.
\end{aligned}
\]
These maps are surjective, with
\[
\ker\varphi_\ell=\Z_1,
\qquad \ker\psi_m=\Z_m.
\]
In terms of generators of $\GG$, this gives
\[
H_{\ell,m}^{\mathrm{rot}}
=\langle(r,T_m^{2\ell}),
          (\delta,T_m^5)\rangle,
\qquad |H_{\ell,m}^{\mathrm{rot}}|=10m.
\]
For the remaining families, write $\bar1$ for the nontrivial
element of $\Z_2$ and use the temporal quotient map
$\psi_m(U_m)=\bar1$. Their finite-factor maps are
\[
\begin{array}{c|cc}
& s & \delta \\ \hline
\varphi_{\mathrm p}:D_1^p\to\Z_2
  &1&\bar1\\
\varphi_{\mathrm d}:D_1^p\to\Z_2
  &\bar1&\bar1
\end{array}
\qquad
\varphi_\delta(\delta)=\bar1.
\]
Thus
\[
\begin{aligned}
H_m^{\mathrm p}
 &=\langle(s,1),(\delta,U_m)\rangle,
 &|H_m^{\mathrm p}|&=4m,\\
H_m^{\mathrm d}
 &=\langle(s\delta,1),(\delta,U_m)\rangle,
 &|H_m^{\mathrm d}|&=4m,\\
H_m^\delta
 &=\langle(\delta,U_m)\rangle,
 &|H_m^\delta|&=2m.
\end{aligned}
\]
Here the temporal quotient maps for the latter three families
have kernel $\Z_m$; the finite-factor kernels are, respectively,
$D_1$, $D_1^d$, and $\Z_1$.

Table~\ref{tab:D5-degree-subgroups} lists the exact fiber-product
data relevant for $m=1,2$. Every entry has dimension zero;
the full group $D_5^p\times S^1$ has dimension one.

\begin{table}[htbp]
\centering
\caption{Amalgamated subgroups occurring in the $D_5$ basic-degree formulas.}
\label{tab:D5-degree-subgroups}
\renewcommand{\arraystretch}{1.17}
\setlength{\tabcolsep}{5pt}
\small
\begin{tabular}{@{}llll@{}}
\toprule
Subgroup & Fiber-product data $(K_1,N_1,Q,N_2,K_2)$
 & Generators & Order\\
\midrule
$H_{1,1}^{\mathrm{rot}}$
& $(\Z_5^p,\Z_1,\Z_{10},\Z_1,\Z_{10})$
& $(r,T_1^2),(\delta,T_1^5)$ & $10$\\
$H_{2,1}^{\mathrm{rot}}$
& $(\Z_5^p,\Z_1,\Z_{10},\Z_1,\Z_{10})$
& $(r,T_1^4),(\delta,T_1^5)$ & $10$\\
$H_1^{\mathrm p}$
& $(D_1^p,D_1,\Z_2,\Z_1,\Z_2)$
& $(s,1),(\delta,-1)$ & $4$\\
$H_1^{\mathrm d}$
& $(D_1^p,D_1^d,\Z_2,\Z_1,\Z_2)$
& $(s\delta,1),(\delta,-1)$ & $4$\\
$H_1^\delta$
& $(\Z_2,\Z_1,\Z_2,\Z_1,\Z_2)$
& $(\delta,-1)$ & $2$\\
\midrule
$H_{2,2}^{\mathrm{rot}}$
& $(\Z_5^p,\Z_1,\Z_{10},\Z_2,\Z_{20})$
& $(r,T_2^4),(\delta,T_2^5)$ & $20$\\
$H_2^{\mathrm p}$
& $(D_1^p,D_1,\Z_2,\Z_2,\Z_4)$
& $(s,1),(\delta,i)$ & $8$\\
$H_2^{\mathrm d}$
& $(D_1^p,D_1^d,\Z_2,\Z_2,\Z_4)$
& $(s\delta,1),(\delta,i)$ & $8$\\
$H_2^\delta$
& $(\Z_2,\Z_1,\Z_2,\Z_2,\Z_4)$
& $(\delta,i)$ & $4$\\
\bottomrule
\end{tabular}
\end{table}

In particular, the two fiber products
\[
H_{1,1}^{\mathrm{rot}}
=\amal{\Z_5^p}{\Z_1}{\Z_{10}}{\Z_1}{\Z_{10}},
\qquad
H_{2,1}^{\mathrm{rot}}
=\amal{\Z_5^p}{\Z_1}{\Z_{10}}{\Z_1}{\Z_{10}}
\]
share the same five displayed quotient data but use different
identifications:
\[
\varphi_1(r)=R^2,\qquad
\varphi_2(r)=R^4,\qquad
\varphi_1(\delta)=\varphi_2(\delta)=R^5.
\]


\begin{thebibliography}{99}

\bibitem{AED}
Z.~Balanov, W.~Krawcewicz, and H.~Steinlein,
\textit{Applied Equivariant Degree},
AIMS Series on Differential Equations \& Dynamical Systems, Vol.~1,
American Institute of Mathematical Sciences, Springfield, MO, 2006.

\bibitem{SURVEY}
Z.~Balanov, W.~Krawcewicz, S.~Rybicki, and H.~Steinlein,
\textit{A short treatise on the equivariant degree theory and its applications},
J. Fixed Point Theory Appl., \textbf{8} (2010), no.~1, 1--74.

\bibitem{Bonheure2014}
D. Bonheure, E. Moreira dos Santos and H. Tavares,
Hamiltonian elliptic systems: a guide to variational frameworks,
\emph{Portugaliae Math.} \textbf{71} (2014), 301--395.
\url{https://doi.org/10.4171/PM/1954}.

\bibitem{Clarke1979}
F. H. Clarke,
A classical variational principle for periodic Hamiltonian trajectories,
\emph{Proc. Amer. Math. Soc.} \textbf{76} (1979), 186--188.

\bibitem{Clarke1985}
F. H. Clarke,
Periodic solutions of Hamilton's equations and local minima of the dual action,
\emph{Trans. Amer. Math. Soc.} \textbf{287} (1985), 239--251.

\bibitem{ClarkeEkeland1980}
F. H. Clarke and I. Ekeland,
Hamiltonian trajectories having prescribed minimal period,
\emph{Comm. Pure Appl. Math.} \textbf{33} (1980), 103--116.


\bibitem{Dab1}
M.~Dabkowski, W.~Krawcewicz, Y.~Lv, and H.~Wu,
\textit{Multiple periodic solutions for \(\Gamma\)-symmetric Newtonian systems},
J. Differential Equations, \textbf{263} (2017), no.~10, 6684--6730.

\bibitem{Dancer}
E.~N.~Dancer and S.~Rybicki,
\textit{A note on periodic solutions of autonomous Hamiltonian systems emanating from degenerate stationary solutions},
Differential Integral Equations, \textbf{12} (1999), no.~2, 147--160.

\bibitem{FitzpatrickPejsachowiczRecht1999}
P.~M.~Fitzpatrick, J.~Pejsachowicz, and L.~Recht,
Spectral flow and bifurcation of critical points of strongly-indefinite functionals,
Part I: General theory,
\emph{J. Funct. Anal.},
\textbf{162} (1999), no.~1, 52--95.
\url{https://doi.org/10.1006/jfan.1998.3366}.

\bibitem{GA_Ghanem_Krawcewicz_CGL}
C.~Garc\'ia-Azpeitia, Z.~Ghanem, and W.~Krawcewicz,
\textit{Global bifurcation of spiral wave solutions to the complex Ginzburg--Landau equation},
Nonlinear Differential Equations Appl., \textbf{33} (2026), Article~109.

\bibitem{Az1}
C.~Garc\'ia-Azpeitia and J.~Ize,
\textit{Global bifurcation of planar and spatial periodic solutions in the restricted \(n\)-body problem},
Celestial Mech. Dynam. Astronom., \textbf{110} (2011), no.~3, 217--237.

\bibitem{Az2}
C.~Garc\'ia-Azpeitia and J.~Ize,
\textit{Global bifurcation of polygonal relative equilibria for masses, vortices and dNLS oscillators},
J. Differential Equations, \textbf{251} (2011), no.~11, 3202--3227.

\bibitem{Geba1997}
K. G\k{e}ba,
Degree for gradient equivariant maps and equivariant Conley index,
in \emph{Topological Nonlinear Analysis, Degree, Singularity and Variations},
Progr. Nonlinear Differential Equations Appl. \textbf{27},
Birkh\"auser, 1997, 247--272.

\bibitem{GolebiewskaRybicki2011}
A. Go\l\k{e}biewska and S. Rybicki,
Global bifurcations of critical orbits of
$G$-invariant strongly indefinite functionals,
\emph{Nonlinear Anal.} \textbf{74} (2011), 1823--1834.
\url{https://doi.org/10.1016/j.na.2010.10.055}.

\bibitem{GranasDugundji2003}
A.~Granas and J.~Dugundji,
\textit{Fixed Point Theory},
Springer Monographs in Mathematics,
Springer, New York, 2003.

\bibitem{Krasnoselskii}
M.~A.~Krasnosel'skii,
\textit{Topological Methods in the Theory of Nonlinear Integral Equations},
Pergamon Press, Oxford, 1964.

\bibitem{SY1}
W.~Krawcewicz, H.-P.~Wu, and S.~Yu,
\textit{Periodic solutions in reversible second order autonomous systems with symmetries},
J. Nonlinear Convex Anal., \textbf{18} (2017), no.~8, 1393--1419.

\bibitem{Rabinowitz}
P.~H.~Rabinowitz,
\textit{Some global results for nonlinear eigenvalue problems},
J. Funct. Anal., \textbf{7} (1971), no.~3, 487--513.

\bibitem{RobbinSalamon1995}
J.~Robbin and D.~Salamon,
The spectral flow and the Maslov index,
\emph{Bull. London Math. Soc.},
\textbf{27} (1995), no.~1, 1--33.
\url{https://doi.org/10.1112/blms/27.1.1}.

\bibitem{RuanRybicki2008}
H. Ruan and S. Rybicki,
Applications of equivariant degree for gradient maps to symmetric Newtonian systems,
\emph{Nonlinear Anal.} \textbf{68} (2008), 1479--1516.
\url{https://doi.org/10.1016/j.na.2006.12.039}.

\bibitem{Rybi1}
S.~Rybicki,
\textit{Bifurcations of solutions of \(SO(2)\)-symmetric nonlinear problems with variational structure},
in \textit{Handbook of Topological Fixed Point Theory},
R.~F.~Brown, M.~Furi, L.~G\'orniewicz, and B.~Jiang, eds.,
Springer, Dordrecht, 2005, 339--372.

\bibitem{RybickiGrad}
S.~Rybicki,
\textit{Degree for equivariant gradient maps},
Milan J. Math., \textbf{73} (2005), 103--144.

\bibitem{YuSupplyChain}
S.~Yu,
\textit{Global bifurcation and symmetry of periodic inventory oscillations in ring supply-chain networks},
arXiv:2608.14388, 2026.

\bibitem{YuFractional}
S.~Yu,
\textit{Local and global equivariant bifurcation for periodic Weyl and Riesz fractional equations},
arXiv:2608.11101, 2026.

\bibitem{YuRingDelay}
S.~Yu,
\textit{Periodic solutions to a ring of identical cells with delay via the equivariant degree method},
Int. J. Differ. Equ., \textbf{2026} (2026), Article ID 2137372.

\bibitem{diss}
			S. Yu, \emph{Existence and Bifurcation of Periodic solutions in second order nonlinear system: Brouwer Equivariant Degree Method}, 2019,
			Ph.D Dissertation in UTD.

\end{thebibliography}
\end{document}